\documentclass[12pt]{amsart}
\usepackage[utf8]{inputenc}
\usepackage{setspace}
\usepackage[english]{babel}
\usepackage{mathabx}
\usepackage{framed}
\usepackage{soul}
\usepackage[all]{xy}
\usepackage[a4paper,margin=1in]{geometry}
\usepackage{arydshln}
\usepackage[colorinlistoftodos,textsize=small]{todonotes}
\todostyle{syl}{inline,color=cyan}
\usepackage{pb-diagram}
\usepackage[mathscr]{euscript}
\usepackage{multicol}
\usepackage{stmaryrd} 
\usepackage{empheq} 
\usepackage{tikz-cd}
\tikzset{
  symbol/.style={
    draw=none,
    every to/.append style={
      edge node={node [sloped, allow upside down, auto=false]{$#1$}}}
  }
}

\usepackage{amsmath,amsfonts,amssymb,amsthm,mathrsfs,latexsym,mathtools}
\usepackage{commath}
\usepackage[T1]{fontenc}
\usepackage{hyperref}
\usepackage{caption} 
\usepackage{subcaption} 

\usepackage{lipsum}

\DeclareMathAlphabet{\mathbbmsl}{U}{bbm}{m}{sl}

\usetikzlibrary{matrix}

\newcommand{\C}{\mathbb{C}} 
\newcommand{\Z}{\mathbb{Z}}

\newcommand{\Sym}{\textup{Sym}}
\newcommand{\ddef}{\textup{def}}
\newcommand{\Bl}{\textup{Bl}}

\DeclareMathOperator{\Mod}{\mathrm{Mod}}
\newcommand\ChangeRT[1]{\noalign{\hrule height #1}} 

\usepackage{amscd} 

\usepackage{enumitem} 
\usepackage{comment}

\newtheorem{theorem}{Theorem}[section]
\newtheorem{definition}[theorem]{Definition}
\newtheorem{proposition}[theorem]{Proposition}
\newtheorem{corollary}[theorem]{Corollary}
\newtheorem{question}[theorem]{Question}
\newtheorem{example}[theorem]{Example}
\newtheorem{lemma}[theorem]{Lemma}

\newtheorem{remark}[theorem]{Remark}

\tikzset{commutative diagrams/.cd,
mysymbol/.style={start anchor=center,end anchor=center,draw=none}
}

\title{On the discriminant locus of a generic projection}
\date{April 19, 2025}

\author{Si-Yang Liu}
\address{Hausdorff Center for Mathematics, University of Bonn,
Endenicher Allee 62, Villa Maria, 53115 Bonn, Germany}
\email{syliu@math.uni-bonn.de}

\author{Yilong Zhang}
\address{Department of Mathematics, University of Georgia,
110 Carlton St, Athens, GA 30602, U.S.A.}
\email{Yilong.Zhang@uga.edu}

\subjclass[2020]{14N05, 14C21, 14F35}

\begin{document}
\maketitle

\begin{abstract}
For a smooth projective variety $X\subseteq \mathbb P^N$ over an algebraically closed field of char $0$, we show that the discriminant locus of a generic projection of $X$ is projectively dual to a general linear section of the dual variety, and deduce a purity statement for the discriminant. Over $\C$, we also show that the fundamental group of the complement of the branch divisor of a generic projection of a normal hypersurface surjects onto a braid group via braid monodromy.

\end{abstract}

\setcounter{tocdepth}{1} 
\tableofcontents

\section{Introduction}
Let $X$ be a smooth projective variety over an algebraically closed field $\Bbbk$ of char $0$. Given an embedding $X\subseteq \mathbb P^N$, a \textit{Lefschetz pencil} on $X$ is a choice of a general one-parameter family of hyperplane sections of $X$, which induces a rational map 
$$X\dashrightarrow \mathbb P^1.$$
The general member is smooth and the number of the singular members is called the \textit{class} of $X$. When the dual variety $X^{\perp}$ is a hypersurface, this equals $\deg X^{\perp}$ and is computed from the topology of the Lefschetz pencil \cite{Katz73,Lam81}. When $\Bbbk=\C$, the fibration over finite discriminant complement $\mathbb P^1\setminus  \Delta$ induces a monodromy representation
$$\pi_1(\mathbb P^1\setminus \Delta,*)\to \textup{Aut}H^*(X\cap H,\Z),$$
where $X\cap H$ is a smooth hyperplane section, which carries rich topological information. This is the classical Picard--Lefschetz theory \cite{Voisin}.



The motivation of this paper is to generalize this investigation to a Lefschetz net, and more broadly, a higher-dimensional linear subsystem of $|\mathcal{O}_{\mathbb P^N}(1)|$. Such a system arises from projectivization $\mathbb PV$ of a general $(k+1)$ dimensional subspace $V\subseteq H^0(\mathbb P^N,\mathcal{O}(1))$. There are two natural families associated with the linear system. The first comes from observing that $\mathbb PV$ is naturally a subspace of the dual space $(\mathbb P^N)^{\vee}$, so the base change of the universal family of hyperplane sections of $X$ over $(\mathbb P^N)^{\vee}$ to $\mathbb PV$  gives rise to the family
\begin{equation}\label{Intro_eqn_univ-family/PV}
    \mathcal{X}_V\to \mathbb PV,
\end{equation}
whose fiber over $H\in \mathbb PV$ is the hyperplane section $X\cap H$.

The second family is a generic projection 
\begin{equation}\label{Intro_eqn_gen-proj}
    \pi: X\dashrightarrow \mathbb PV^{\vee}, \ x\mapsto [l_0(x):l_1(x):\ldots:l_k(x)],
\end{equation}
where $l_0,\ldots,l_k$ are $k+1$ general linear forms forming a basis of $V$. The base locus is $X\cap \mathbb PV^{\perp}$. 
The two families are related by a linear dual on the ambient projective space. When $k=1$, up to blowing up the base locus, the two families coincide and are precisely the Lefschetz fibration. When $k\ge 2$, they are different: The fiber of generic projection is a higher codimension linear section. We want to explore the relationship between the two families, particularly where the fibers become singular. This is captured by the notion of discriminant.







\begin{definition}[Discriminant locus]\normalfont\label{Intro_def_disc}
    Let $f:\mathcal{Y}\to B$ be a proper dominant morphism. Assume that $B$ is smooth. Define the \textit{discriminant locus} $\Delta\subseteq B$ as the closed subspace consisting of points $b$ where the fiber $f^{-1}(b)=\mathcal{Y}_b$ is singular. In the flat setting, this agrees set-theoretically with the usual definition via Fitting ideal. When $f: \mathcal Y\to B$ is a proper non-dominant map, we use the convention that $\Delta=f(\mathcal Y)$. See more discussion in Section \ref{sec_disc}. 
\end{definition}

For the hyperplane sections family \eqref{Intro_eqn_univ-family/PV}, the discriminant locus is precisely the linear section by $\mathbb PV$ of the dual variety $\mathbb PV\cap X^{\perp}$ \cite{GKZ08}, which is reduced as long as $\mathbb PV$ is in general position, and is moreover irreducible if it has positive dimension. When $k=2$, a general plane section $\mathbb P^2\cap X^{\perp}$ is expected to have only ordinary nodes and cusps, then their numbers can be computed explicitly using intersection theory (cf. \cite[(2.2), (2.3)]{DolLib81}, \cite{Roberts}).

In this paper, we want to investigate similar questions for generic projection:



\begin{question}
    How can one characterize the discriminant locus $\Delta$ of the generic projection of $X$? 
\end{question}
In particular, one may ask about its dimension, irreducibility, degree, and singularities. Here we interpret the discriminant of the rational map \eqref{Intro_eqn_gen-proj} as the discriminant of the regular morphism $\tilde{X}\to \mathbb PV^{\vee}$ obtained by blowing up the base locus.

Our approach is related to the classical theory of polar varieties and generic projections, especially the work of Piene \cite{Pie78}, Kleiman \cite{Kle86}, Lê--Teissier \cite{LeTeissier88}, and Flores--Teissier \cite{FT18}. In particular, the relation between critical loci of projections and polar loci already appears in that literature in various forms.
The new point here is to reinterpret this relation through projective duality, identifying the discriminant of a generic projection with the projective dual of a general linear section of $X^\perp$, and to deduce from this a purity statement and further applications.



\subsection{Duality of discriminant locus}
We begin by addressing the question with projective duality. 
All varieties are over an algebraically closed field of char $0$.


\begin{theorem}[Theorem \ref{thm_mainthmequiv}]\label{thm_dual}
Let $X\subseteq \mathbb P^N$ be a smooth projective variety, and $\mathbb PV$ is a general $k$-dimensional linear subspace in the dual space $(\mathbb P^N)^{\vee}$. 
Then under the natural identification $(\mathbb PV^{\vee})^{\vee}\cong \mathbb PV$, one has
$$\Delta= (\mathbb PV\cap X^{\perp})^{\perp}.$$
In particular, the discriminant loci of two families \eqref{Intro_eqn_univ-family/PV} and \eqref{Intro_eqn_gen-proj} are projectively dual to each other.
\end{theorem}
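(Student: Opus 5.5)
The plan is to compare both sides of the equality through a single incidence correspondence and then use biduality (reflexivity, which holds in characteristic $0$).

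Write $L=\mathbb PV^{\perp}\subseteq \mathbb P^N$ for the center of the projection. A point $p\in \mathbb PV^{\vee}$ corresponds to a hyperplane $W_p\subseteq V$, that is, to a $(k-1)$-plane $\mathbb PW_p\subseteq \mathbb PV$. Its fiber in the blow-up $\tilde X\to \mathbb PV^{\vee}$ is the linear section $X\cap \Lambda_p$, where $\Lambda_p=\mathbb PW_p^{\perp}\supseteq L$ is the $(N-k+1)$-plane spanned by $L$ and the preimage of $p$.

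\emph{Step 1: describe $\Delta$ pointwise.} First I would show that, for $\mathbb PV$ general, $\tilde X$ is smooth. This holds because $X\cap L$ is a smooth complete intersection by Bertini. Then the fiber $\tilde X_p\cong X\cap\Lambda_p$ is singular at $x$ if and only if some hyperplane $H\supseteq \Lambda_p$ is tangent to $X$ at $x$. I would check this criterion separately for $x\notin L$ (the usual Jacobian criterion) and for points on the exceptional divisor ($x\in X\cap L$). In the latter case genericity of $L$ should show the fiber is smooth there, unless it is already forced by tangency. The hyperplanes $H\supseteq\Lambda_p$ are exactly the points of $\mathbb PW_p$. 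Hence
$$p\in\Delta \iff \mathbb PW_p\cap X^{\perp}\neq\emptyset.$$
In dual terms, $\Delta$ is the set of hyperplanes of $\mathbb PV$ meeting $Y:=\mathbb PV\cap X^{\perp}$. The convention for non-dominant maps is compatible with this: when $\dim X<k$, every fiber over the image is singular or empty as required. I would verify that case directly.

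\emph{Step 2: compare with the dual of $Y$.} The locus $\{p:\mathbb PW_p\cap Y\neq\emptyset\}$ is the union of the hyperplanes $y^{\vee}\subseteq\mathbb PV^{\vee}$ for $y\in Y$. For $\dim Y\ge 0$ this is typically a whole hypersurface or all of $\mathbb PV^{\vee}$, which is much larger than $Y^{\perp}$. So Step 1 is too naive. The correct condition must take multiplicities and reducedness into account: singular fibers should appear only where $\mathbb PW_p$ is \emph{tangent} to $Y$.

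I would therefore redo Step 1 with the fibers $X\cap\Lambda_p$ for all $p$. Let $d=\dim X$, and take the generic case where $\Lambda_p$ has dimension $N-k+1$. A hyperplane $H\in\mathbb PW_p\cap X^{\perp}$, tangent at $x$, gives a singular point of $X\cap \Lambda_p$ at $x$ only if $x\in\Lambda_p$. Now $\Lambda_p=\bigcap_{H'\in\mathbb PW_p}H'$, so we need $x\in H'$ for all $H'\in \mathbb PW_p$.

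The precise statement is then: $p\in\Delta$ iff there is $x\in X$ with $\mathbb PW_p\subseteq x^{\vee}$ and $\mathbb PW_p\cap T_x^{\perp}\neq\emptyset$. Here $T_x^{\perp}\subseteq \mathbb P V$ denotes the hyperplanes tangent at $x$, a linear space, so the conditions are intersected with $\mathbb PV$.

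\emph{Step 3: the conormal variety.} Use the conormal variety $C_X=\{(x,H):T_xX\subseteq H\}$ and the reflexivity theorem $C_X=C_{X^{\perp}}$. Restricting to $\mathbb PV$, a general linear section, $C_X\cap(\mathbb P^N\times\mathbb PV)$ coincides with the conormal of $Y$ in a suitable sense. Namely, the tangent space of $Y$ at a general $H$ is $\mathbb PV\cap x^{\vee}$, where $x$ is the tangency point. Then the condition in Step 2 says that $\mathbb PW_p$ is a hyperplane of $\mathbb PV$ containing $T_HY=\mathbb PV\cap x^{\vee}$ for some smooth $H\in Y$. That is exactly $p\in Y^{\perp}$, after taking closure. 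Conversely, the closure statement follows because $\Delta$ is closed (since $\tilde X\to\mathbb PV^{\vee}$ is proper) and the smooth locus of $Y$ is dense. Genericity of $\mathbb PV$ is used, via Bertini on $C_X$, to ensure that the general points of $Y$ are smooth points of $X^{\perp}$ and that transversality holds.

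\emph{Main obstacle.} The main difficulty is making Step 3 rigorous at non-general points: singular points of $Y$, points where the tangency locus on $X$ is positive-dimensional (when $X^{\perp}$ is not a hypersurface), and the exceptional divisor over $X\cap L$. I would handle this by arguing entirely on incidence varieties. I would show that the critical locus of $\tilde X\to\mathbb PV^{\vee}$ equals the image of an irreducible incidence variety built from $C_X\cap(\mathbb P^N\times\mathbb PV)$ (irreducible by Bertini), and that the conormal variety of $Y$ is the image of the same incidence variety. Both sides are then closed images of one irreducible variety, so they agree.
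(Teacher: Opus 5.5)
Your overall route is sound, and it is the paper's argument transposed across the duality. The paper proves Theorem~\ref{thm_mainthmequiv} (a section of $X$ and a projection of $X^\perp$), and it uses biduality to identify the critical locus (Proposition~\ref{prop_critical-locus}), so the image step becomes elementary linear algebra. You work directly on the smooth side, where your corrected Step~2 criterion needs no biduality: $x\in\Lambda_p\setminus L$ and some $H\in\mathbb PW_p$ contains $T_xX$. Nothing comes from $X\cap L$, since $T_xX$ and $L$ span $\mathbb P^N$ there. Biduality must therefore be used in the image step instead.

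There are two slips. The displayed equivalence in Step~1 is false, as you notice yourself. Also, $\Lambda_p=\mathbb PW_p^{\perp}$ has dimension $N-k$, not $N-k+1$.

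The inclusion $\Delta\subseteq Y^\perp$ is fine. At a general critical pair $(x,H)$, $H$ is a smooth point of both $X^\perp$ and $Y$, with $T_HY=T_HX^\perp\cap\mathbb PV$. Biduality gives $T_HX^\perp\subseteq x^\vee$, and $\mathbb PW_p=\mathbb PV\cap x^\vee$, so $\mathbb PW_p\supseteq T_HY$.

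The gap is the reverse inclusion $Y^\perp\subseteq\Delta$. Step~3 uses $T_HY=\mathbb PV\cap x^\vee$, which holds only when $X^\perp$ is a hypersurface. In general $T_HX^\perp=\bigcap_{x\in C_H}x^\vee$, where the contact locus $C_H$ is a linear space of dimension $\operatorname{def}(X)$. So $T_HY$ has codimension $\operatorname{def}(X)+1$ in $\mathbb PV$. You must then show that every hyperplane $H_V\supseteq T_HY$ of $\mathbb PV$ has the form $\mathbb PV\cap x^\vee$ with $x\in C_H$. Your final paragraph only asserts this (``the conormal variety of $Y$ is the image of the same incidence variety''). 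It is not a side case: dual defective $X$ is exactly what produces cases (ii) and (iii) of Theorem~\ref{thm_Dirreducible-hypersurface}.

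The missing step is the transpose of Proposition~\ref{prop_critical-dom}. Since $H_V\cap T_HX^\perp=T_HY$ and $\operatorname{Span}(\mathbb PV,T_HX^\perp)=(\mathbb P^N)^\vee$, the span $\operatorname{Span}(H_V,T_HX^\perp)$ is a hyperplane $x^\vee$ with $x^\vee\cap\mathbb PV=H_V$. Because $H\in(X^\perp)^{sm}$ and $T_HX^\perp\subseteq x^\vee$, biduality $C_{X^\perp}=C_X$ gives $x\in X$ with $T_xX\subseteq H$. Since also $H\in T_HY\subseteq H_V$, the point $x\in\Lambda_p$ is a critical point over $p=H_V$.

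A dimension count also works. Each component of $C_X\cap(\mathbb P^N\times\mathbb PV)$ has dimension $k-1=\dim C_Y$. The map $x\mapsto x^\vee\cap\mathbb PV$ is injective on each $C_H$, because $C_H\cap L=\emptyset$. This count must be run component by component, since your Bertini irreducibility claim fails when $\dim Y=0$.
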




Note that in the extreme case when $k=N$, the statement reduces to the classical projective duality. Thus Theorem \ref{thm_dual} can be thought of as a relative version of projective duality.

When $\pi$ is not dominant, i.e.\ when $\dim(X)<k$, the map $\pi$ is a generic immersion onto its image,
and the discriminant coincides with the projection image $\pi(X)$.
In this case, a version of Theorem \ref{thm_dual} appears in
\cite[Theorem 1.21]{Tev05} and \cite[Proposition 6.1]{Hol88}. We are mainly interested in the dominant case.

\subsection{Purity of discriminant locus} Recall that the Zariski–Nagata purity theorem (see Theorem \ref{thm_Zariski-Nagata}) implies that a finite morphism from a normal variety to a smooth variety is either branched along a hypersurface, or is étale with empty branch locus. We prove an analogue for the discriminant locus of generic projection.

From now on, $X\subseteq \mathbb P^N$ is a smooth projective variety, and let \begin{equation}\label{intro_eqn_genproj}
    \pi_k:X\dashrightarrow \mathbb P^k
\end{equation} 
be a generic projection to $\mathbb P^k$ and assume it is dominant (equivalently, $\dim(X)\ge k$). For simplicity, we identify $\mathbb PV^{\vee}\cong \mathbb P^k$ once the generic projection is fixed.  


\begin{theorem}[Theorem \ref{Thm2proof}]\label{thm_Dirreducible-hypersurface}
Let $c$ be the codimension of the dual variety $X^{\perp}$ in $(\mathbb P^N)^{\vee}$, then 
    \begin{itemize}
    \item[(i)] if $c<k$, $\Delta(\pi_k)$ is an irreducible hypersurface;
    \item[(ii)] if $c=k$, $\Delta(\pi_k)$ is a finite union of hyperplanes;
    \item[(iii)] if $c>k$, $\Delta(\pi_k)$ is empty.
    \end{itemize}
    In particular, the discriminant locus of a dominant generic projection is either of pure codimension one or is empty.
\end{theorem}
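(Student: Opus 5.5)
We deduce the result from Theorem \ref{thm_dual}, which identifies $\Delta(\pi_k)$ with $(\mathbb PV\cap X^{\perp})^{\perp}$ inside $(\mathbb PV)^{\vee}\cong\mathbb P^k$. The question then becomes one about the projective dual of a general $k$-dimensional linear section $Y:=\mathbb PV\cap X^{\perp}$ of the dual variety. We split into cases according to $\dim Y$. Here $X^{\perp}$ is an irreducible variety of dimension $N-1-(c-1)=N-c$ (with the convention that $c$ is its codimension). By generality of $\mathbb PV$, $\dim Y=k-c$ when this is nonnegative, and $Y=\emptyset$ when $c>k$.

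\emph{Case (iii), $c>k$.} Here $Y$ is empty, so its dual is empty and $\Delta(\pi_k)=\emptyset$. This is consistent with the definition: every fiber of the blown-up projection is smooth.

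\emph{Case (ii), $c=k$.} Here $Y$ is a finite set of $\deg X^{\perp}$ reduced points in $\mathbb PV$. The dual of a point $p\in\mathbb PV$ is the hyperplane $p^{\perp}\subseteq\mathbb P^k$, so $\Delta(\pi_k)$ is a finite union of hyperplanes, one for each point of $Y$.

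\emph{Case (i), $c<k$.} Now $\dim Y=k-c\geq 1$. By Bertini's theorem (applied to a general linear section of the irreducible variety $X^{\perp}$ in positive dimension), $Y$ is irreducible and reduced, as already noted in the introduction. Hence $Y^{\perp}$ is irreducible, being the image of the irreducible conormal variety of $Y$. It remains to show that $Y^{\perp}$ is a hypersurface in $\mathbb P^k$. Equivalently, $Y$ must not be dual defective, and it must not be a linear subspace; in the linear case $Y^{\perp}$ is a linear space of codimension $\dim Y+1\ge 2$.

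This last point is the main obstacle. The plan is to use reflexivity in characteristic $0$: $Y^{\perp}$ is a hypersurface if and only if a general tangent hyperplane to $Y$ is tangent at only finitely many points, i.e.\ the general tangent hyperplane section of $Y$ has isolated singularities.

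First we dispose of linearity. If $Y$ were linear, then $X^{\perp}$ would be linear, since a general linear section of positive dimension of an irreducible nondegenerate-in-its-span variety is linear only if the variety is. But then $X=(X^{\perp})^{\perp}$ would be linear, and for a linear $X$ the dual has codimension $\dim X+1=c$. Here we need care: in that case $\pi_k$ is dominant only if $\dim X\ge k$, which forces $c=\dim X+1>k$, contradicting $c<k$.

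For defectiveness, we use the structure of the conormal variety. A hyperplane $H\subset\mathbb PV$ is tangent to $Y$ at a smooth point $y$ exactly when $H\supseteq T_yY=T_yX^{\perp}\cap\mathbb PV$. By reflexivity, the contact locus of a general tangent hyperplane to $X^{\perp}$ is a linear space $\mathbb P^{c-1}$, the fiber of the conormal map over a point of $X$. The contact locus of $H$ on $Y$ is then contained in the union of intersections of such linear spaces with $\mathbb PV$, and one checks it has dimension $\le c-1-(N-k)+\ldots$. The cleaner route is a dimension count on the incidence $\{(y,H): y\in Y,\ H\supseteq T_yY\}$, which has dimension $(k-c)+(c-1)=k-1$. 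We then show that its projection to $\mathbb P^k$ is generically finite by exhibiting one pair with isolated contact. To do this we would relate it back to $X$: via Theorem \ref{thm_dual}, points of $\Delta$ correspond to fibers of the projection with a singular point, and the critical locus of $\pi_k$ (a polar variety of $X$, of codimension $\dim X-k+1$) maps generically finitely onto its image by a generic projection argument. Then $\dim\Delta=\dim(\text{polar variety})=k-1$.

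Combining the three cases gives the purity statement.
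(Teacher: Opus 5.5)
\paragraph{Where the proposal falls short.} Cases (ii) and (iii), and the irreducibility in case (i), match the paper: Theorem \ref{thm_dual} plus Bertini for $Y=\mathbb PV\cap X^{\perp}$. Your exclusion of a linear $Y$ is also correct. The gap is the codimension-one claim in case (i). You flag it as the main obstacle but do not prove it.

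Your first attempt misidentifies the contact loci. The fiber of $N^{\perp}_{\mathbb P^N}X\to X$ over a smooth point $x$ is the contact locus of $x^{\perp}$ on $X^{\perp}$. It equals $(T_xX)^{\perp}\cong\mathbb P^{N-n-1}$, not $\mathbb P^{c-1}$. The linear space $\mathbb P^{c-1}$ is the fiber over a general point of $X^{\perp}$, i.e.\ the contact locus on $X$. The accompanying estimate is also left unfinished.

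The ``cleaner route'' only restates that the conormal variety of $Y$ has dimension $k-1$. So everything hinges on the claim that the critical (polar) locus of $\pi_k$ maps generically finitely onto $\Delta$ ``by a generic projection argument.'' That is not routine. It amounts to saying that a general singular fiber $X\cap L$, with $L$ containing the projection center, has only finitely many singular points. Equivalently, a general codimension-$k$ linear space tangent to $X$ has finite contact locus. For $k=1$ this is exactly non-defectiveness of $X$. It is a defect-type statement that holds only because $c<k$, and it is essentially the content of the theorem.

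In the paper, generic finiteness (indeed birationality) of $\mathcal C(\pi)\to\Delta(\pi)$ is Proposition \ref{prop_one-to-one}, which is proved only afterwards and via the same factorization described below. You would also need the determinantal bound that every component of the critical locus has codimension at most $\dim X-k+1$. You state this as an equality without justification.

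\paragraph{How the paper supplies the missing finiteness.} It uses the factorization of Lemma \ref{lemma_factor}. Write the projection as a chain
\[
X\to\mathbb P^n\dashrightarrow\mathbb P^{n-1}\dashrightarrow\cdots\dashrightarrow\mathbb P^k
\]
of generic projections from a point. Let $\Delta_n$ be the smooth discriminant of the finite map $X\to\mathbb P^n$, and $\Delta_l$ that of $\Delta_{l+1}\to\mathbb P^l$.

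Theorem \ref{thm_mainthmequiv} is proved for arbitrary irreducible, possibly singular, varieties precisely so it can be applied to the $\Delta_l$. Applying it with biduality at each step gives $\Delta_l^{\perp}=X^{\perp}\cap\Lambda_l$ for general nested $l$-planes $\Lambda_l$. Hence $\Delta_k=\Delta(\pi_k)$, and $\Delta_l\neq\emptyset$ for $l\ge k>c$.

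Inductively, each $\Delta_{l+1}$ is a hypersurface in $\mathbb P^{l+1}$, so $\Delta_{l+1}\to\mathbb P^l$ is finite and surjective. For such a map the smooth discriminant is pure of codimension one when nonempty. The paper argues via normalization and Zariski--Nagata purity. Directly: on the smooth part, the critical locus is the zero locus of the Jacobian determinant, hence a divisor, and a finite map preserves its dimension.

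\paragraph{How to repair your argument.} Keep your polar-variety framework, but run it on the finite map $\Delta_{k+1}\to\mathbb P^k$ rather than on $X\dashrightarrow\mathbb P^k$. There the generic finiteness you need is automatic.
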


Note that case (i) is generic when $k\ge 2$. Indeed, smooth projective varieties with dual variety not a hypersurface are called \textit{dual defective}, which are special (see Remark \ref{remark_defect}); Case (ii) generalizes Lefschetz pencil in $k=1$; Case (iii) has the following application:





\begin{corollary}[Corollary \ref{cor_smooth-fibration}]\label{intro_cor_smoothfib}
 A smooth projective variety $X\subseteq \mathbb P^N$ is $k$-dual defective (i.e., $\textup{codim}(X^{\perp})>k$) if and only if the extension of the generic projection \eqref{intro_eqn_genproj} to the blowup of base locus $B$
$$\tilde{\pi}:\textup{Bl}_BX\to \mathbb P^k$$
is a smooth morphism. 
\end{corollary}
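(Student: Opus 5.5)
The plan is to deduce Corollary \ref{intro_cor_smoothfib} from Theorem \ref{thm_Dirreducible-hypersurface} together with the definition of the discriminant (Definition \ref{Intro_def_disc}). The discriminant of the rational map $\pi_k$ is, by convention, the discriminant of the regular morphism $\tilde\pi:\textup{Bl}_BX\to \mathbb P^k$.

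First I will verify the setup: $\textup{Bl}_BX$ is smooth and $\tilde\pi$ is proper and dominant. The base locus is $B=X\cap \mathbb PV^{\perp}$, the intersection of $X$ with $k+1$ general hyperplanes. By Bertini, $B$ is smooth of codimension $k+1$, or empty when $\dim X<k+1$. Hence $\textup{Bl}_BX$ is smooth.

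There is a convenient global description. We have $\textup{Bl}_BX=\{(x,H)\in X\times \mathbb PV^{\vee}: x\in \Lambda_H\}$, where $\Lambda_H$ is the codimension-$k$ linear space determined by $H$. This exhibits $\textup{Bl}_BX$ as the intersection of $X\times\mathbb P^k$ with a general member of a base-point-free family, so it is a smooth incidence variety. Also, $\tilde\pi$ is surjective because $\dim X\ge k$, and it is proper.

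Next I use generic smoothness, which holds in characteristic $0$. A morphism $\tilde\pi$ between smooth varieties is smooth at every point of a fiber exactly when that scheme-theoretic fiber is smooth of the expected dimension $\dim X-k$. One subtlety needs handling: flatness. Over a smooth base with a smooth source, miracle flatness says that equidimensional fibers of dimension $\dim X-k$ give flatness.

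The fibers of $\tilde\pi$ are $X\cap\Lambda_H$, which are linear sections of codimension $k$. For a general $\mathbb PV$, no such section has a component of excess dimension. Indeed, an excess component would force $X$ to contain an excess-dimensional family inside a linear space, and by a dimension count this fails for general $V$ (that is, $X$ is not contained in a proper linear subspace of small dimension). Therefore $\tilde\pi$ is flat. Then $\tilde\pi$ is smooth if and only if every fiber is smooth, which holds if and only if $\Delta(\pi_k)=\emptyset$.

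Finally I apply Theorem \ref{thm_Dirreducible-hypersurface}. In cases (i) and (ii), $\Delta$ is a nonempty hypersurface or a nonempty union of hyperplanes. For (ii), nonemptiness holds because $\mathbb PV\cap X^{\perp}$ is a nonempty finite set when $c=k$, and its dual is nonempty. In case (iii), $\Delta=\emptyset$. So $\Delta=\emptyset$ if and only if $c>k$, which is exactly $k$-dual defectiveness.

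Alternatively, I can bypass the trichotomy using Theorem \ref{thm_dual}: $\Delta=(\mathbb PV\cap X^{\perp})^{\perp}$, which is empty if and only if $\mathbb PV\cap X^{\perp}=\emptyset$. For general $\mathbb PV$ of dimension $k$, that holds if and only if $\textup{codim}X^{\perp}>k$.

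The main obstacle is the flatness/equidimensionality step, which lets me identify "all fibers smooth" with "$\tilde\pi$ smooth". I would handle it by the incidence dimension count above, or simply note that smoothness of a morphism is equivalent to flatness plus smooth fibers, and that for a non-flat point the fiber dimension jumps, so the fiber is singular there in the sense used in Section \ref{sec_disc}.
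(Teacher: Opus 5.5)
Your proposal is correct and follows essentially the paper's route. Since $X$, and hence $\textup{Bl}_BX$, is smooth, $\tilde\pi$ is smooth exactly when it has no critical points, i.e.\ when the discriminant is empty. By Theorem~\ref{thm_dual} this happens exactly when a general $\mathbb PV$ misses $X^{\perp}$, i.e.\ when $\textup{codim}(X^{\perp})>k$.

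The flatness and equidimensionality detour is unnecessary. For a morphism of smooth varieties, smoothness at a point is equivalent to surjectivity of the differential there, which is what the paper uses. Your dimension-count justification for equidimensionality is also vague. If you want that step, there is a one-line argument: a component of $X\cap\Lambda_H$ of dimension greater than $\dim X-k$ would meet the hyperplane $\mathbb PV^{\perp}\subset\Lambda_H$ in dimension greater than $\dim B=\dim X-k-1$, which is impossible.
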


As a special case, when $k=1$, it implies dual defective varieties admit a smooth Lefschetz pencil. Corollary \ref{intro_cor_smoothfib} recovers a classical result that dual varieties for smooth curves and surfaces are hypersurfaces unless they are linear (see Proposition \ref{prop_no-defect-dim1,2}).

\begin{remark}[Singular total space]\normalfont
 We also show that both Theorem \ref{thm_dual} and \ref{thm_Dirreducible-hypersurface} remain valid when $X$ is normal and $\pi$ is finite and dominant; see Theorem \ref{thm_finite-normal}. When $X$ is singular in general, the claims about duality and purity still apply to a closed subspace of the discriminant captured by the smooth locus of $X$ (see Definition \ref{def_discriminant}, Theorem \ref{thm_mainthmequiv} and \ref{Thm2proof}). 
\end{remark}

\subsection{Monodromy and braid groups.\label{sec:braid-group-monodromy}}
For varieties over $\C$, fundamental groups of complements of hypersurfaces are closely related to the geometry of singular hypersurfaces and have been studied for a long time \cite{Zariskipair,Kampen33,MT88,DolLib81,Moishezon81,Dimca92}. In some of the interesting examples, it's computed that the fundamental groups of hypersurface complements are isomorphic to \emph{braid groups}, $B_m$, which are roughly speaking the group of trajectories of a finite collection of points moving on the sphere $\mathbb S^2\cong \mathbb{P}^1$ (see Section \ref{sec_braidgroup}). 

We extend this perspective to generic finite projections of normal hypersurfaces and relate the resulting branch divisors to projective dual varieties of hyperplane sections.


\begin{theorem}\label{intro_thm_pi1}
Let $Y\subseteq \mathbb P^{N+1}$ be a normal hypersurface of degree $m$, and let
$\pi:Y\to \mathbb P^N$ be a general linear projection with branch divisor $\Delta$.
Then:
\begin{itemize}
    \item $\Delta$ is a hypersurface of degree $\deg(\Delta)=m(m-1)$. If $N>1$, then $\Delta$ is irreducible;
    
    \item let $\mathcal X=Y^\perp\subseteq (\mathbb P^{N+1})^\vee$, 
    and $X=H\cap \mathcal X$ is a general hyperplane section, then $\Delta \cong X^\perp$;

    \item the braid monodromy induces a surjection
  \begin{equation}\label{intro_eqn_surj-pi1}
        \pi_1(\mathbb P^N\setminus \Delta,*)\twoheadrightarrow B_m.
    \end{equation} 
\end{itemize}
\end{theorem}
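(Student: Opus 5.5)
We prove the three items in turn. The projection $\pi:Y\to\mathbb P^N$ is projection from a general point $p\in\mathbb P^{N+1}\setminus Y$; it is finite of degree $m$, with $Y$ normal. The natural framework is the finite–normal version of Theorems \ref{thm_dual} and \ref{thm_Dirreducible-hypersurface} (Theorem \ref{thm_finite-normal}). Here $k=N$, and the dual space of the target $\mathbb P^N$ is the hyperplane $H=p^\perp\subseteq(\mathbb P^{N+1})^\vee$ of hyperplanes through $p$. A general point $p$ gives a general hyperplane $H$.

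\textbf{Second item.} Theorem \ref{thm_dual}, in its normal finite form, identifies $\Delta$ with $(H\cap Y^\perp)^\perp=X^\perp$, where the dual is taken inside $H^\vee\cong\mathbb P^N$. This is exactly the second item.

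\textbf{First item: degree.} We compute the degree directly. A general line $L\subseteq\mathbb P^N$ pulls back to a plane $P\ni p$. Since $P$ is a general plane through a general point, $C=P\cap Y$ is a smooth plane curve of degree $m$, by Bertini applied to the normal hypersurface, whose singular locus has codimension $\ge2$. Then $\Delta\cap L$ is the branch locus of projecting $C$ from $p$ onto $L$. By Riemann–Hurwitz, or by counting the tangents to $C$ through a general point (the class of $C$), this has $m(m-1)$ points. These points are simple branch points, so $\Delta$ is reduced along $L$, and $\deg\Delta=m(m-1)$.

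\textbf{First item: hypersurface and irreducibility.} That $\Delta$ is a hypersurface follows from Zariski–Nagata purity (Theorem \ref{thm_Zariski-Nagata}). The branch locus is nonempty because $\mathbb P^N$ is simply connected and $m\ge2$; the case $m=1$ is trivial. For irreducibility when $N>1$, use $\Delta\cong X^\perp$. The variety $Y^\perp$ is irreducible, being the closure of the Gauss image of the smooth locus. Its general hyperplane section $X$ is irreducible as soon as $\dim Y^\perp\ge2$, and the dual of an irreducible variety is irreducible. It remains to show $\dim Y^\perp\ge2$ when $N>1$. If instead $\dim Y^\perp\le1$, then $X$ is finite, so $X^\perp$ is a finite union of hyperplanes. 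Then $\Delta$ would be $m(m-1)$ hyperplanes. This must be excluded using case (i) versus (ii) of Theorem \ref{thm_Dirreducible-hypersurface}: one has $c<k$ unless $Y$ is strongly dual defective. I expect this to require a short argument that a normal hypersurface with $\dim Y^\perp\le1$ is a cone over a curve or points, and such cones are non-normal or linear when $N\ge2$; this is one delicate point.

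\textbf{Third item: braid monodromy.} Fix a general point $*\in\mathbb P^N\setminus\Delta$. The fiber $\pi^{-1}(*)$ is $m$ distinct points on the line $\ell_*$ through $p$ and $*$, and $\ell_*\setminus\{p\}\cong\C$. Moving $*$ moves these $m$ points in $\C$, which gives a homomorphism
\[
\pi_1(\mathbb P^N\setminus\Delta,*)\to B_m .
\]
For surjectivity, restrict to a general line $L$ as above; by Zariski–Lefschetz type theorems $\pi_1(L\setminus\Delta)\to\pi_1(\mathbb P^N\setminus\Delta)$ is surjective, but we only need the map from $\pi_1(L\setminus\Delta)$. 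Over $L$ the situation becomes a smooth plane curve $C$ of degree $m$ projected from a general point. Its braid monodromy is classical: each of the $m(m-1)$ simple tangents gives a half-twist. These half-twists generate $B_m$, because the monodromy of the discriminant complement of degree-$m$ polynomials is realized. Concretely, one can degenerate $C$ to a union of $m$ general lines, or use that the image contains the half-twists $\sigma_i$ along a chain of arcs.

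\textbf{Main obstacle.} The hard step is this surjectivity: proving that the half-twists produced by a general smooth plane curve generate all of $B_m$, rather than a proper subgroup. I would try Moishezon's degeneration argument, or alternatively the irreducibility of the space of smooth degree-$m$ curves together with the explicit Fermat-type computation $y^m=x^m-1$ … fiberwise roots of unity rotation.
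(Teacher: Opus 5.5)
Your second item, the degree computation and the use of Zariski--Nagata agree with the paper (Theorem~\ref{thm_finite-normal}, Bertini for a general plane section, and the class $m(m-1)$ of a smooth plane curve). The genuine gap is irreducibility. It cannot be closed, because the lemma you expect is false: a normal hypersurface with $\dim Y^\perp=1$ need not be linear or non-normal.

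Take a smooth plane curve $D=\{f=0\}$ of degree $m\ge 2$, and let $Y=\{f(x_0,x_1,x_2)=0\}\subseteq\mathbb P^{N+1}$ be the cone over $D$ with vertex $\Lambda=\{x_0=x_1=x_2=0\}\cong\mathbb P^{N-2}$. Then $\operatorname{Sing}(Y)=\Lambda$ has codimension $2$ in $Y$, and a hypersurface regular in codimension one is normal, so $Y$ is normal. On the other hand, $Y^\perp$ is the dual plane curve $D^\perp$, of degree $m(m-1)$. Hence $X=H\cap Y^\perp$ consists of $m(m-1)$ points, and $\Delta=X^\perp$ is a union of $m(m-1)$ hyperplanes through the image of $\Lambda$; this is case (ii) of Theorem~\ref{thm_finite-normal}.

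Concretely, take the quadric cone $q=x_0x_1-x_2^2$ in $\mathbb P^3$ and any center $p\notin Y$. The discriminant quadric $b(y,p)^2-4q(p)q(y)$, with $b$ the polar form of $q$, has both $p$ and the vertex in its kernel. So $Y\to\mathbb P^2$ is a double cover branched along a line pair; for $p=[1:1:0:1]$ the branch locus is $(y_0-y_1)^2+4y_2^2=0$.

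Thus the assertion ``$\Delta$ is irreducible for $N>1$'' is false as stated. Your Bertini argument proves it exactly when $\dim Y^\perp\ge 2$. For normal $Y$ of degree $m\ge2$, this excludes precisely the cones over smooth plane curves, so it holds e.g.\ if $\operatorname{codim}_Y\operatorname{Sing}(Y)\ge 3$.

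The paper's proof of this step has the same hole. It uses that a general projection of the curve $Y^\perp$ to $(\mathbb P^3)^\vee$ is a nondegenerate space curve, whose dual (a general $\mathbb P^3$-section of $Y$) is a non-normal tangent developable. That is right when $Y^\perp$ spans at least a $\mathbb P^3$. But a plane curve stays planar under projection, and its dual is a cone, which is normal when the dual plane curve is smooth.

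For the braid monodromy, your reduction is the paper's: restrict to a general line $L$, so that $\pi^{-1}(L)$ is a smooth plane curve of degree $m$ projected from a general point, and note that the monodromy of $\pi_1(L\setminus\Delta)$ factors through $\pi_1(\mathbb P^N\setminus\Delta)$. However, you leave the essential input unproved. The paper supplies it as Lemma~\ref{lmm:curve-braid}, referring to Moishezon. There, simple branching and connectedness of the cover are used to choose generating loops acting as half-twists along consecutive pairs $(z_i,z_{i+1})$, and Artin's presentation then gives all of $B_m$.

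Be aware that transitivity alone only yields transpositions generating $\mathfrak S_m$. Surjectivity onto $B_m$ really needs the half-twists to be taken along a chain of arcs, which is Moishezon's theorem for generic projections of smooth plane curves. You should invoke that result; your suggested degeneration to $m$ lines is a reasonable route to proving it. Your remark that the monodromy of the discriminant complement of degree-$m$ polynomials ``is realized'' is not an argument. The map from $L\setminus\Delta$ into that complement has no a priori reason to be surjective on $\pi_1$.
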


 
\begin{remark}\normalfont
    Moishezon \cite{Moishezon81} proved that when $Y$ is smooth and more generally if $Sing(Y)$ has codimension at least 3, then the braid monodromy representation \eqref{intro_eqn_surj-pi1} is actually an isomorphism. This fails in general for normal $Y$; see Example \ref{example_B4}.
\end{remark}
\begin{corollary}\label{intro_cor_normaldual}
   Let $\mathcal X$ be a smooth projective variety whose dual variety is a normal hypersurface of degree $m$. Let $X=H\cap \mathcal X$ be a general hyperplane section. Use the identification $\mathbb P^N\cong H^{\vee}$, then there is a surjection 
    $$\pi_1(\mathbb P^N\setminus X^{\perp} ,*)\twoheadrightarrow B_m.$$
\end{corollary}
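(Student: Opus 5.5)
The plan is to deduce Corollary \ref{intro_cor_normaldual} directly from Theorem \ref{intro_thm_pi1}, by applying that theorem to the hypersurface $Y:=\mathcal X^{\perp}$ and using biduality to identify the resulting branch divisor with $X^{\perp}$.

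First I set up the hypothesis of Theorem \ref{intro_thm_pi1}. Let $\mathcal X\subseteq \mathbb P^{N+1}$, so that $\mathcal X^{\perp}\subseteq (\mathbb P^{N+1})^{\vee}$. By assumption $Y:=\mathcal X^{\perp}$ is a normal hypersurface of degree $m$ in $(\mathbb P^{N+1})^{\vee}\cong \mathbb P^{N+1}$. Since we are in characteristic $0$, reflexivity (biduality) gives $Y^{\perp}=(\mathcal X^{\perp})^{\perp}=\mathcal X$. Thus in the notation of Theorem \ref{intro_thm_pi1} the variety called $\mathcal X$ there, namely $Y^{\perp}$, is exactly our $\mathcal X$.

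Next I identify the projection. A general hyperplane $H\subseteq \mathbb P^{N+1}$ corresponds to a general point $h\in (\mathbb P^{N+1})^{\vee}$. Linear projection of $Y$ from $h$ lands in the $\mathbb P^N$ of lines through $h$, and this $\mathbb P^N$ is canonically $H^{\vee}$: hyperplanes in $(\mathbb P^{N+1})^{\vee}$ through $h$ are points of $H$, so lines through $h$ are dual to hyperplanes of $H$. This is the same duality between $\mathbb PV$ and $\mathbb PV^{\vee}$ used in Theorem \ref{thm_dual}, now with $\mathbb PV=H$ of dimension $N$. For general $h$, the projection $\pi\colon Y\to \mathbb P^N$ is a general linear projection, finite because $h\notin Y$. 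The second bullet of Theorem \ref{intro_thm_pi1} then gives $\Delta\cong X^{\perp}$ with $X=H\cap\mathcal X$. I will check that this isomorphism is in fact an equality of subvarieties of $\mathbb P^N=H^{\vee}$ under the stated identification. Theorem \ref{thm_dual} and Theorem \ref{thm_finite-normal} (the normal, finite case) give exactly $\Delta=(\mathbb PV\cap Y^{\perp})^{\perp}$ with $\mathbb PV=H$, which is the same hypersurface.

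Finally, the third bullet of Theorem \ref{intro_thm_pi1} supplies the braid monodromy surjection $\pi_1(\mathbb P^N\setminus\Delta,*)\twoheadrightarrow B_m$. Substituting $\Delta=X^{\perp}$ yields the claim.

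The only point needing care is the genericity matching. A general hyperplane $H$ of $\mathbb P^{N+1}$ must correspond to a center $h$ that is general enough for Theorem \ref{intro_thm_pi1} to apply, and the identification $\mathbb P^N\cong H^{\vee}$ must be the same one under which $\Delta$ and $X^{\perp}$ coincide rather than being merely abstractly isomorphic. Both genericity conditions are open and dense in the same parameter space $(\mathbb P^{N+1})^{\vee}$, so they can be imposed simultaneously. The identification itself is the canonical one coming from Theorem \ref{thm_dual}.
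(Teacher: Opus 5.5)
Your proposal is correct and is essentially the paper's argument: the paper deduces the corollary directly from Proposition \ref{prop:surjection-permutation-irreducible}(2) (equivalently Theorem \ref{intro_thm_pi1}) applied to $Y=\mathcal X^\perp$, projected from the point $h$ dual to $H$, with biduality giving $Y^\perp=\mathcal X$. Your additional check that $\Delta$ and $X^\perp$ coincide as subvarieties of $H^\vee$ is worth making explicit, since the fundamental group of the complement depends on the embedding and not just on the isomorphism class; this follows from Theorem \ref{thm_finite-normal}, i.e.\ Theorem \ref{thm_mainthmequiv} together with cleanness from Lemma \ref{lemma_finite-clean}.
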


Smooth varieties with normal dual were studied by Zak \cite{Zak_normaldual}. The normality condition on the dual forces $\mathcal X$ to be non-extendable unless $\mathcal X$ is a quadric hypersurface. Examples of such $\mathcal X$ include determinantal varieties, such as the second Veronese embedding $v_2(\mathbb P^n)$, the Severi varieties \cite{Zak_tangents&secants}, and the Segre embeddings of $\mathbb P^n\times \mathbb P^n$ of bidegree $(1,1)$.\\

\noindent\textbf{Structure of the paper.}
In Section \ref{sec_prelim}, we discuss linear projection of projective space, review the classical projective duality theorem and Pl\"ucker formulas. 

In Section \ref{sec_disc}, we introduce the notion of \textit{smooth discriminant} and compare it with the discriminant of Definition \ref{Intro_def_disc}. We also recall the Zariski--Nagata purity theorem for finite branched covers.

In Section \ref{sec_main-thm} and \ref{sec_purity}, we prove Theorem \ref{thm_dual} and \ref{thm_Dirreducible-hypersurface}. In fact, we prove a more general statement about duality and purity (cf. Theorem \ref{thm_mainthmequiv} and \ref{Thm2proof}) for the smooth discriminant of generic projection of an irreducible projective variety $X$. 

Section \ref{sec_emptydisc}, \ref{sec_example} and \ref{sec_polar} are applications of the main theorems.
In Section \ref{sec_emptydisc}, we prove Corollary \ref{intro_cor_smoothfib} and discuss failure in some singular setting. In Section \ref{sec_example}, we demonstrate Theorem \ref{thm_dual} in surfaces and threefolds, and compute singularities of discriminant curves. In Section \ref{sec_polar}, we discuss the relationship to the polar locus and deduce some classical results on the degree of polar loci. 



In Section \ref{sec_braidgroup}, we study braid monodromy representations of fundamental group of complement of irreducible hypersurface and prove Theorem \ref{intro_thm_pi1}.
\\
 
 \noindent\textbf{Field of definition.} Throughout the paper, we work over an algebraically closed field $\Bbbk$ of characteristic $0$, and particularly over $\C$ in Section \ref{sec_braidgroup}. Note that the arguments in Section \ref{sec_main-thm} (in particular Theorem \ref{thm_dual} and \ref{thm_mainthmequiv}) rely only on biduality for projective duals, and should extend to positive characteristic for reflexive varieties. \\
 
\noindent\textbf{Acknowledgment.}  We would like to thank Anatoly Libgober, Christian Schnell, John Sheridan, and Bernard Teissier for helpful conversations and correspondence. We are grateful to Thomas Dedieu for pointing out the distinction between our definition of smooth discriminant (Definition \ref{def_discriminant}) and the definition via Fitting ideals in the literature, for carefully reading an earlier draft, and for many helpful comments on the revision. 

We gratefully acknowledge the support and hospitality of the Simons Center for Geometry and Physics at Stony Brook University during the \textit{2nd Simons Math Summer Workshop: Moduli} in July 2024, where a project motivating this work began. S.-Y. Liu is funded by the Deutsche Forschungsgemeinschaft (DFG, German Research Foundation) under Germany's Excellence Strategy - GZ 2047/1, Projekt-ID 390685813.



\section{Preliminaries}\label{sec_prelim}
In this section, we revisit classical projective duality from the viewpoint of linear algebra and conormal varieties, setting up the main geometric framework for our later results. 


\subsection{Linear algebra}
For vector space $\Bbbk^{N+1}$, we denote $(\Bbbk^{N+1})^{\vee}$ its dual space consisting of linear functionals on $\Bbbk^{N+1}$. For any linear subspace $V\subseteq \Bbbk^{N+1}$, we denote $V^{\perp}\subseteq (\Bbbk^{N+1})^{\vee}$ be the subspace consisting of linear functionals vanishing on $V$. Note that $\textup{codim}(V^{\perp})=\dim(V)$.

\begin{lemma}\label{lemma_V*and-quotient}
 Let $V^{\vee}$ denote the dual space of $V$. Then there is a natural isomorphism of vector spaces
  $$V^{\vee}\cong (\Bbbk^{N+1})^{\vee}/V^{\perp}.$$
\end{lemma}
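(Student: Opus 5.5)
The plan is to realize the isomorphism as the restriction map and then read off its kernel. Consider the linear map
$$r\colon (\Bbbk^{N+1})^{\vee}\to V^{\vee},\qquad \varphi\mapsto \varphi|_V,$$
which sends a functional on $\Bbbk^{N+1}$ to its restriction to $V$. It is clearly linear and involves no choices, so whatever isomorphism it induces will be natural.

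First, the kernel. A functional restricts to zero on $V$ exactly when it vanishes on $V$. Hence $\ker r=V^{\perp}$ by definition.

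Second, surjectivity. Choose a basis $v_1,\dots,v_d$ of $V$ and extend it to a basis $v_1,\dots,v_{N+1}$ of $\Bbbk^{N+1}$. Given $\psi\in V^{\vee}$, define $\varphi$ by $\varphi(v_i)=\psi(v_i)$ for $i\le d$ and $\varphi(v_i)=0$ for $i>d$. Then $r(\varphi)=\psi$. The basis is used only to prove surjectivity, so the map $r$ itself stays choice-free.

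Alternatively, surjectivity follows from a dimension count. Since $\operatorname{codim}(V^{\perp})=\dim V$ (as noted above), the image of $r$ has dimension $(N+1)-\dim V^{\perp}=\dim V=\dim V^{\vee}$.

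Finally, the first isomorphism theorem gives $(\Bbbk^{N+1})^{\vee}/V^{\perp}\cong V^{\vee}$, with the isomorphism induced by $r$. Naturality means compatibility with linear automorphisms of $\Bbbk^{N+1}$ preserving $V$, and this holds because restriction commutes with pullback. There is no real obstacle here. The only point needing care is surjectivity, that is, extending functionals from a subspace, which the basis extension handles.
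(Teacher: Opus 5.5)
Your proof is correct and is essentially the paper's argument. The paper dualizes the short exact sequence $0\to V\to \Bbbk^{N+1}\to \Bbbk^{N+1}/V\to 0$ and identifies $(\Bbbk^{N+1}/V)^{\vee}$ with $V^{\perp}$; the map $(\Bbbk^{N+1})^{\vee}\to V^{\vee}$ in that dual sequence is exactly your restriction map $r$, and your kernel computation and basis-extension argument verify by hand the exactness of the dualized sequence, which the paper takes for granted.
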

\begin{proof}
    Consider the exact sequence 
$$0\to V\to \Bbbk^{N+1}\to \Bbbk^{N+1}/V\to 0.$$
Dualizing yields
    $$0\leftarrow   V^{\vee}\leftarrow  (\Bbbk^{N+1})^{\vee}\leftarrow   (\Bbbk^{N+1}/V)^{\vee}\leftarrow0.$$
The subspace $(\Bbbk^{N+1}/V)^{\vee}$ consists of those linear functionals on $\Bbbk^{N+1}$ that vanish on $V$. Hence, it is canonically identified with $V^{\perp}$. This proves the claim.
\end{proof}

Passing to projectivization, $\mathbb PV\subseteq \mathbb P^N$ is a projective subspace, then $\mathbb PV^{\perp}\subseteq (\mathbb P^N)^{\vee}$ and induces the rational projection
\begin{equation}\label{eqn_p}
    p: (\mathbb P^N)^{\vee}\dashrightarrow \mathbb P \big 
   ((\Bbbk^{N+1})^{\vee}/V^{\perp}\big )\cong \mathbb PV^{\vee}.
\end{equation}
The indeterminacy locus of $p$ is exactly $\mathbb PV^{\perp}$.

Identifying linear functionals with the set of hyperplanes in $\mathbb P^N$, the map \eqref{eqn_p} can be described by sending a hyperplane $H$ of $\mathbb P^N$ to a hyperplane in $\mathbb PV$ by intersection:
$$H\mapsto H\cap \mathbb PV.$$
The image defines a hyperplane in $\mathbb PV$ if and only if $H$ is transversal to $\mathbb PV$, if and only if $\mathbb PV\nsubseteq H$. 
\begin{lemma}\label{lmm_linear_PD}
    Let $x\in\mathbb{P} V\subseteq\mathbb{P}^N$ be a point with dual hyperplane $x^{\perp}\subseteq\left(\mathbb{P}^N\right)^{\vee}$, then the projection $p\colon\left(\mathbb{P}^N\right)^{\vee}\dashrightarrow\mathbb{P}V^{\vee}$ send the hyperplane $x^{\perp}$ in $(\mathbb P^N)^{\vee}$ to the dual hyperplane $x_V^{\perp}$ of $x$ in $\mathbb{P} V$. Conversely, $x^{\perp}$ is the cone of $x_V^{\perp}$ with center being the base locus $\mathbb PV^{\perp}$.
\end{lemma}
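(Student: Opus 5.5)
The plan is to work at the level of linear algebra and then projectivize. Write $W=\Bbbk^{N+1}$, so that $V\subseteq W$, and use the identification $V^{\vee}\cong W^{\vee}/V^{\perp}$ from Lemma \ref{lemma_V*and-quotient}. Under this identification the quotient map $q\colon W^{\vee}\to W^{\vee}/V^{\perp}$ is simply restriction of functionals: $\phi\mapsto \phi|_V$. Dualizing the exact sequence in that proof shows this. Projectivizing $q$ gives $p$ away from $\mathbb PV^{\perp}$.

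Fix a point $x=[v]\in\mathbb PV$ with $0\ne v\in V$. Its dual hyperplane in $(\mathbb P^N)^{\vee}$ is
$$x^{\perp}=\mathbb P\{\phi\in W^{\vee}:\phi(v)=0\}.$$
Its dual hyperplane inside $\mathbb PV$, viewed in $\mathbb PV^{\vee}$, is
$$x_V^{\perp}=\mathbb P\{\psi\in V^{\vee}:\psi(v)=0\}.$$
The key step is to observe that for $\phi\in W^{\vee}$ we have $\phi(v)=(\phi|_V)(v)=q(\phi)(v)$. Hence the linear hyperplane $\widehat{x^{\perp}}:=\ker(\mathrm{ev}_v)\subseteq W^{\vee}$ is exactly $q^{-1}$ of the linear hyperplane $\widehat{x_V^{\perp}}:=\ker(\mathrm{ev}_v|_{V^{\vee}})\subseteq V^{\vee}$.

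Two consequences follow. First, $\widehat{x^{\perp}}\supseteq V^{\perp}=\ker q$. Second, since $q$ is surjective, $q(\widehat{x^{\perp}})=\widehat{x_V^{\perp}}$. Projectivizing, $p$ maps $x^{\perp}\setminus\mathbb PV^{\perp}$ onto $x_V^{\perp}$, which is the first assertion. Here "sends" is read as the closure of the image of the rational map restricted to $x^{\perp}$; note that $x^{\perp}$ contains the base locus. As a consistency check, in terms of hyperplanes $H\mapsto H\cap\mathbb PV$: a hyperplane $H$ contains $x$ exactly when $H\cap\mathbb PV$ contains $x$.

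For the converse, the identity $\widehat{x^{\perp}}=q^{-1}(\widehat{x_V^{\perp}})$ says precisely that $x^{\perp}$ is the union of all lines joining points of $\mathbb PV^{\perp}$ to points of $\mathbb P(\widehat{x^{\perp}})$ lying over $x_V^{\perp}$. This is the definition of the cone over $x_V^{\perp}$ with vertex $\mathbb PV^{\perp}$. To make this concrete, choose a splitting $W^{\vee}=V^{\perp}\oplus S$ with $S\cong V^{\vee}$ via $q$. Then $\widehat{x^{\perp}}=V^{\perp}\oplus(S\cap \widehat{x^{\perp}})$, so $x^{\perp}$ is the join of $\mathbb PV^{\perp}$ with a copy of $x_V^{\perp}$ in $\mathbb PS$. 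A dimension count confirms the picture: the cone has dimension $(N-k-1)+(k-1)+1=N-1$.

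There is no real obstacle here. The only point needing care is keeping the identification $V^{\vee}\cong W^{\vee}/V^{\perp}$ explicit as restriction. It is also worth noting that the cone description does not depend on the choice of splitting.
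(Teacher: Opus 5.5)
Your proof is correct and rests on the same observation as the paper's: $p$ is restriction of functionals to $V$ (equivalently $H\mapsto H\cap\mathbb PV$), and a functional vanishes at $x\in\mathbb PV$ exactly when its restriction does. The only difference is that you state this as the identity $\widehat{x^{\perp}}=q^{-1}(\widehat{x_V^{\perp}})$. That identity gives surjectivity without the paper's dimension count, and it also proves the cone statement, which the paper's proof leaves implicit.
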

\begin{proof}
    Note that the dual of any $\alpha\in x^{\perp}\setminus\mathbb{P} V^{\perp}$ is a hyperplane $\alpha^{\perp}\subseteq\mathbb{P}^N$ containing $x$ and is transversal to $\mathbb{P}V$, therefore the intersection $\alpha^{\perp}\cap\mathbb{P}V$ is a hyperplane of $\mathbb{P}V$ whose dual is $p(\alpha)\in\mathbb{P}V^{\vee}$. Let $x_V^{\perp}$ be the dual of $x$ in $\mathbb{P}V$. As $x\in\alpha^{\perp}\cap\mathbb{P}V$, we know that $p(\alpha)\in x_V^{\perp}$, and therefore $p(x^{\perp})\subseteq x_V^{\perp}$. Dimension counting tells us that $p(x^{\perp})=x_V^{\perp}$.
\end{proof}

\subsection{Projective duality}

Now let $X\subseteq \mathbb P^{N}$ be an irreducible subvariety. Let $x\in X^{sm}$ be a closed point of $X$ contained in the smooth locus $X^{sm}$, then the tangent space $T_xX$ at $x$ has the same dimension as $X$. We may regard $T_xX$ as a projective subspace of $\mathbb P^{N}$ of dimension $\dim(X)$. We use this convention throughout the paper. Using our notation, $(T_xX)^{\perp}\subseteq (\mathbb P^N)^{\vee}$ is the conormal space and has dimension $N-\dim(X)-1$.

The \textit{conormal variety} $N_{\mathbb P^N}^{\perp}X$ of $X$ is defined to be the closure of the union of conormal spaces at smooth points \cite{HK85}
\begin{equation}\label{eqn_conormal}
    N_{\mathbb P^N}^{\perp}X=\overline{\{(x,H)\in X^{sm}\times (\mathbb P^{N})^{\vee}| T_xX\subseteq H\}}.
\end{equation}
It records pairs of smooth points and their tangent hyperplanes.
It is irreducible and has dimension $N-1$. Note that here, to require the hyperplane $H$ to contain $T_xX$ is equivalent to finding a hyperplane $H$ tangent to $X$ at $x$. In the case when $X$ is a hypersurface $f=0$ of degree at least two, and $x$ is a smooth point, such $H$ is unique and coincides with the tangent hyperplane $T_xX$, which is determined by the normal vector $\nabla f_x$.

\begin{definition}\normalfont
   The \textit{dual variety} of $X$, denoted $X^{\perp}$ throughout the paper, is defined to be the image of the projection map $N_{\mathbb P^N}^{\perp}X\to (\mathbb P^{N})^{\vee}$. 
\end{definition}

\begin{remark}\normalfont It is more common in the literature to denote the dual variety by $X^*$ or $X^{\vee}$. We instead write $X^{\perp}$ to emphasize that, by definition, $X^{\perp}$ is the (closure of) the union of the conormal spaces
\[
 (T_x X)^{\perp} \subset (\mathbb P^N)^{\vee}, \qquad x \in X^{\mathrm{sm}}.
\]
In particular, we reserve the notation $(\ \cdot\ )^{\vee}$ for dual vector spaces, and use $(\ \cdot\ )^{\perp}$ for orthogonal complements and for dual varieties.
\end{remark}

There is an incidence correspondence between $X$ and $X^{\perp}$ shown below.

\begin{figure}[ht]
    \centering
\begin{tikzcd}
&N_{\mathbb P^N}^{\perp}X\arrow[dl,"\pi_1"']\arrow[dr,"\pi_2"]\\
X && X^{\perp}\subseteq (\mathbb P^N)^{\vee}.
\end{tikzcd}
\end{figure}
Hence, the conormal space $(T_{x_0}X)^{\perp}$ is precisely the fiber $\pi_1^{-1}(x_0)$, and is identified with $\pi_2(\pi_1^{-1}(x_0))$, a linear subvariety of the dual variety $X^{\perp}$. 

The classical projective duality is a reflexivity property:
\begin{theorem}[Classical projective duality \cite{Kle86}] \label{thm_classicalPD}
Let $X$ be a projective variety over an algebraically closed field of char $0$. Then the dual variety of $X^{\perp}$ is isomorphic to $X$, i.e., there is an isomorphism
    $$X^{\perp\perp}\cong X.$$
\end{theorem}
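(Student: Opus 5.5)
The plan is the standard conormal-variety argument. The goal is to show that the conormal variety is symmetric under the swap $\mathbb P^N\times(\mathbb P^N)^{\vee}\cong(\mathbb P^N)^{\vee}\times\mathbb P^N$, i.e.
$$N^{\perp}_{\mathbb P^N}X \;=\; N^{\perp}_{(\mathbb P^N)^{\vee}}(X^{\perp})$$
after exchanging factors. Granting this, the image of the second conormal variety in $\mathbb P^N$ is by definition $X^{\perp\perp}$, while the image of the first is $X$, since $\pi_1$ is surjective onto $X$ (its fibres over $X^{sm}$ are nonempty projective spaces and the source is closed). This gives $X^{\perp\perp}=X$ as subvarieties of $\mathbb P^N$. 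Both conormal varieties are irreducible of dimension $N-1$ (noted after \eqref{eqn_conormal}). Hence it suffices to prove one inclusion on a dense open subset, say that a general point of $N^{\perp}_{\mathbb P^N}X$ lies in $N^{\perp}(X^{\perp})$.

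To get this inclusion, I will pass to the affine cones $\hat X\subseteq \Bbbk^{N+1}$ and $\hat X^{\perp}\subseteq(\Bbbk^{N+1})^{\vee}$. On the cones, the conormal variety is the Lagrangian subvariety $\Lambda=\overline{\{(x,\xi): x\in \hat X^{sm},\ \xi\perp T_x\hat X\}}$ of $T^*\Bbbk^{N+1}=\Bbbk^{N+1}\times(\Bbbk^{N+1})^{\vee}$. Note that $\xi\perp T_x\hat X$ already forces $\langle\xi,x\rangle=0$, because the cone contains the line through $x$.

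The key computation is the following. Take a smooth point $(x,\xi)$ of $\Lambda$ at which $\pi_2$ has maximal rank onto $\hat X^{\perp}$; such points exist by generic smoothness, which uses characteristic $0$. Let $(x(t),\xi(t))$ be any curve in $\Lambda$ through it. Differentiating $\langle \xi(t),x(t)\rangle\equiv0$ gives
$$\langle\xi',x\rangle+\langle\xi,x'\rangle=0.$$
Since $x(t)\in\hat X$, we have $x'\in T_x\hat X$, so $\langle\xi,x'\rangle=0$. Therefore $\langle\xi',x\rangle=0$. Because $\pi_2$ is submersive onto $\hat X^{\perp}$ at this point, the vectors $\xi'$ arising this way fill out $T_\xi\hat X^{\perp}$. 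Hence $x$ annihilates $T_\xi \hat X^{\perp}$, which says precisely that $(\xi,x)$ lies in the conormal of $\hat X^{\perp}$. Projectivizing and taking closures then gives the inclusion on a dense open subset, and the dimension and irreducibility comparison above upgrades this to equality.

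The main obstacle is the step where the tangent vectors $\xi'$ must exhaust $T_\xi X^{\perp}$ at a \emph{smooth} point $\xi$ of $X^{\perp}$. This is exactly where characteristic $0$ enters, via generic smoothness of $\pi_2\colon N^{\perp}X\to X^{\perp}$; the rest is formal. One must also check that the open set of good points meets the locus where $\xi\in (X^{\perp})^{sm}$ and $x\in X^{sm}$, so that both conormal descriptions apply. This holds because each of these conditions defines a nonempty open subset of the irreducible variety $N^{\perp}X$.
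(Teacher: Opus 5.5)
Your proposal is correct and follows essentially the route the paper indicates. The paper does not prove the theorem itself: it cites Kleiman and sketches that biduality amounts to the conormal variety being invariant under swapping the two factors, via the conical Lagrangian property of its affine cone. Your differentiation of $\langle\xi,x\rangle\equiv 0$ on $\Lambda$, combined with generic smoothness of $\pi_2$ and the irreducibility/dimension comparison, is exactly the algebraic content of that argument, and it works over any algebraically closed field of characteristic $0$.

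One cosmetic adjustment: over a general field, replace the curves $(x(t),\xi(t))$ by Zariski tangent vectors. That is, use that $\langle\xi,x\rangle$ vanishes on $\Lambda$, so its differential kills $T_{(x,\xi)}\Lambda$.
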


Equivalently, projective duality says that on a Zariski dense open subspace of conormal variety 
$$(N_{\mathbb P^N}^{\perp}X)^{\circ}=N_{\mathbb P^N}^{\perp}X\cap (X^{sm}\times (X^{\perp})^{sm}),$$
a pair  $(x,\alpha)\in (N_{\mathbb P^N}^{\perp}X)^{\circ}$ satisfies 
\begin{equation}\label{eqn_classical-proj-dual}
    T_xX\subseteq \alpha^{\perp}\ \Longleftrightarrow\ T_\alpha(X^{\perp})\subseteq x^{\perp},
\end{equation}
where $\alpha^{\perp}$ (resp. $x^{\perp}$) is the corresponding hyperplane in $\mathbb P^{N}$ (resp. $(\mathbb P^{N})^{\vee}$).

The proof is to show there is an isomorphism of conormal varieties $N_{\mathbb P^N}^{\perp}X\cong N_{(\mathbb P^N)^{\vee}}^{\perp}X^{\perp}$ by switching the two coordinates. Over $\C$, the affine cone of the conormal variety is a conical Lagrangian subvariety, and the biduality is to switch the two factors. One refers to \cite[Section 1.3]{Tev05}, \cite{GKZ08} for details of this perspective.

\begin{example}\normalfont
    Let $X$ be a smooth hypersurface of $\mathbb P^{N}$ of degree $d\ge 2$, then its dual variety is also a hypersurface (in $(\mathbb P^N)^{\vee}$) with degree \cite[Proposition 2.9]{EisHar16}
    \begin{equation}\label{eqn_deg(X-dual)}
        \deg(X^{\perp})=d(d-1)^{N-1}.
    \end{equation}
       
  When $d\ge 3$, the dual hypersurface $X^{\perp}$ is singular in codimension one, and in particular non-normal. As an example, when $X$ is a smooth cubic curve in $\mathbb P^2$, its dual curve is a sextic curve with 9 cusps. 
  
    Projective duality implies that if a pair $(x,H)\in X^{sm}\times (X^{\perp})^{sm}$ is contained in the smooth locus of the product, then $H$ is tangent to $X$ at $x$ iff $x^{\perp}$ as a hyperplane in $(\mathbb P^N)^{\vee}$ is tangent to $X^{\perp}$ at $H$.
 
\end{example}

\begin{remark}[Dual defect]\normalfont\label{remark_defect}
    For "most" smooth projective varieties $X$, the dual variety is a hypersurface. Indeed, this happens iff there exists a hyperplane $H$ such that $H\cap X$ has an ordinary node. 
    
    In general, for a smooth projective variety $X$, the \textit{dual defect} is defined to be $$\ddef(X):=N-1-\dim(X^{\perp}),$$
   which measures how far the dual variety is from a hypersurface. Varieties with positive defects are often called \textit{dual defective varieties}.

   For example, in dimensions one and two, dual defective varieties are linear. The classification is done up to dimension 10 (cf. \cite{Ein86,Ein85,LanStr87,BMF92}). We will also study defect from the point of view of generic projection in Section \ref{sec_smoothfib}.


\end{remark}
\begin{remark}[Char $p$]\normalfont
   In positive characteristic, Theorem \ref{thm_classicalPD} is not valid in general. Those varieties that satisfy Theorem \ref{thm_classicalPD} are called \textit{reflexive varieties}. Many theorems on dual varieties in char $0$ still hold for reflexive varieties in char $p$. One refers to \cite{Kle86,HK85} for details.
\end{remark}

\subsection{Pl\"ucker formula}
Let $C\subseteq \mathbb P^2$ be a planar curve of degree $d$ with only ordinary nodes and cusps and let their numbers be $\delta$ and $\kappa$ respectively. Then the degree $d^{\perp}$ and genus $g^{\perp}$ of the dual curve $C^{\perp}$ satisfy Pl\"ucker relations (cf.  \cite{Dolgachev-classicalAG} \cite{Kle77})
 \begin{align}
d^{\perp}&=d(d-1)-2\delta-3\kappa,\label{eqn_Plucker1}\\
g^{\perp}&=\frac{(d-1)(d-2)}{2} -\delta-\kappa.\label{eqn_Plucker2}
\end{align}
In addition, if the dual curve $C^{\perp}$ has only ordinary nodes and cusps with numbers $\delta^{\perp}$ and $\kappa^{\perp}$, then they correspond to the number of  bitangents and flex lines on $C$, respectively. Moreover, the degree $d$ and genus $g$ of $C$ satisfy the dual version of \eqref{eqn_Plucker1} and \eqref{eqn_Plucker2} by switching the invariants of $C$ and $C^{\perp}$.


\section{Discriminant of an algebraic map}\label{sec_disc}
For a proper flat family $\pi:\mathcal Y\to B$, the discriminant of Definition \ref{Intro_def_disc} agree set-theoretically with the discriminant defined via the $d$-th Fitting ideal of $\Omega_{\mathcal Y/B}$, where $d=\dim(\mathcal Y)-\dim(B)$ is the relative dimension. See \cite{Tei77} \cite[\href{https://stacks.math.columbia.edu/tag/0C3K}{0C3K}]{stacks-project}. In this section, we use $\Delta_{Fitt}(\pi)$ to denote this discriminant. In this paper, we only focus on the set-theoretical properties of this discriminant and treat $\Delta_{Fitt}(\pi)$ as a variety.
\subsection{Smooth discriminant}
Here we introduce another definition of discriminant of $\pi$ below, which in general is a component of $\Delta_{Fitt}(\pi)$.
\begin{definition}[Smooth critical and discriminant locus]\normalfont \label{def_discriminant}
Let $\pi:\mathcal Y\to B$ be a proper morphism of varieties with $B$ smooth.
Define the \textit{smooth critical locus} $\mathcal C(\pi)$ to be the closure of the set of points $y\in \mathcal Y^{sm}$
such that $d\pi_y:T_y\mathcal Y\to T_{\pi(y)}B$ is not surjective, where $\mathcal Y^{sm}$ is the smooth part of $\mathcal Y$.
Define the \textit{smooth discriminant locus} to be the image  $$\Delta(\pi):=\pi(\mathcal C(\pi))\subseteq B.$$
Equivalently, $\Delta(\pi)$ is the closure of critical values of $\mathcal Y^{sm}\to B$.

\end{definition}
Note that our definition of $\Delta(\pi)$ is set-theoretical, i.e., is a closed variety of $B$. For the generic projection $\pi:X\dashrightarrow \mathbb P^k$, we can extend it to a morphism by blowing up the base locus $\tilde{\pi}:\tilde{X}=\textup{Bl}_BX\to \mathbb P^k$. The smooth discriminant of $\pi$ is referred to as the smooth discriminant of $\tilde{\pi}$.
\begin{remark}\normalfont
\begin{itemize}
    \item[(1)] 
    When the total space $\mathcal Y$ is smooth (e.g., blow-up of a generic projection of a smooth $X$), then  $\mathcal{C}(\pi)$ is the underlying set of the critical scheme and parameterizes the points at which tangent map
$$(d\pi)_y\colon T_y\mathcal{Y}\to T_{\pi(y)} B$$
is not surjective. If $\pi$ is moreover flat, then $\Delta(\pi)=\Delta_{Fitt}(\pi)$ is the classical (Fitting) discriminant (cf. Definition \ref{Intro_def_disc}). 

\item[(2)] When $\pi$ is not dominant, $\Delta(\pi)=\pi(\mathcal Y)$ is the image of $\mathcal Y$. This is different from the conventions in the literature, e.g., \cite[Definition 2.15]{Shi06}. 
\item[(3)] A similar notion of smooth critical locus $\mathcal C(\pi)$ has also been studied by Flores and Teissier in the affine complex analytic setting \cite{FT18}.
\end{itemize}
\end{remark}

 We note the following relations between the smooth discriminant $\Delta$ and the (Fitting) discriminant $\Delta_{Fitt}$.







\begin{lemma}\label{lemmma_disc_vs_smoothdisc}
Let $\pi: \mathcal Y\to B$ be a proper flat morphism and assume that $B$ is smooth. Then set-theoretically, the discriminant $\Delta_{Fitt}(\pi)$ is the union of the smooth discriminant and the image of the singularities of the total space:
    $$\Delta_{Fitt}(\pi)=\Delta(\pi)\cup \pi(Sing(\mathcal Y)).$$
In particular, when the total space $\mathcal {Y}$ is smooth, then two discriminants coincide: $$\Delta_{Fitt}(\pi)=\Delta(\pi).$$ 
\end{lemma}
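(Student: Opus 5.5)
We prove the set-theoretic equality $\Delta_{Fitt}(\pi)=\Delta(\pi)\cup\pi(Sing(\mathcal Y))$ by comparing, fibre by fibre, the smoothness of the fibre $\mathcal Y_b$ with the smoothness of $\mathcal Y$ and the surjectivity of $d\pi$ along it. By the discussion at the start of Section \ref{sec_disc}, for a proper flat $\pi$ we may use $\Delta_{Fitt}(\pi)=\{b\in B : \mathcal Y_b \text{ is not smooth}\}$. Since $\pi$ is proper, it is closed; so $\pi(Sing(\mathcal Y))$ is closed, and $\Delta(\pi)=\pi(\mathcal C(\pi))$ is closed by definition. Hence both sides are closed, and it suffices to check the two inclusions.

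\emph{Inclusion $\supseteq$.} Two points need checking.
\begin{enumerate}
\item Let $y\in Sing(\mathcal Y)$ and $b=\pi(y)$. If $\mathcal Y_b$ were smooth at $y$, then, since $\pi$ is flat at $y$ and $B$ is smooth, $\pi$ would be smooth at $y$. By EGA IV 17.5.1 (flat with smooth fibre implies smooth), $\mathcal Y$ would then be smooth at $y$, a contradiction. So $b\in\Delta_{Fitt}(\pi)$.
\item Let $y\in\mathcal Y^{sm}$ with $d\pi_y$ not surjective. Then $\pi$ is not smooth at $y$, since a smooth morphism has surjective differential at closed points. By the same flatness criterion, the fibre through $y$ is not smooth at $y$, so $\pi(y)\in\Delta_{Fitt}(\pi)$.
\end{enumerate}
Because $\Delta_{Fitt}(\pi)$ is closed, it contains the closure of these critical values. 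This closure is $\Delta(\pi)$, using that $\pi$ is closed, so that $\pi(\overline S)=\overline{\pi(S)}$.

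\emph{Inclusion $\subseteq$.} Suppose $\mathcal Y_b$ is singular at a closed point $y$. Then $\pi$ is not smooth at $y$, because flat plus smooth fibre at $y$ is equivalent to smooth at $y$. If $y\in Sing(\mathcal Y)$, then $b\in\pi(Sing(\mathcal Y))$ and we are done. Otherwise $y\in\mathcal Y^{sm}$, and both $\mathcal Y$ and $B$ are smooth at the relevant points. Over an algebraically closed field of characteristic $0$, a morphism between smooth varieties is smooth at $y$ if and only if $d\pi_y$ is surjective (the Jacobian criterion). Hence $d\pi_y$ is not surjective, so $y\in\mathcal C(\pi)$ and $b\in\Delta(\pi)$.

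The final clause follows at once: if $\mathcal Y$ is smooth, then $Sing(\mathcal Y)=\emptyset$. The main obstacle is only bookkeeping. One must invoke the right criterion, namely that a flat morphism with smooth fibre at $y$ is smooth at $y$, and that smoothness of the source then follows from smoothness of the base. One must also make sure the closure in Definition \ref{def_discriminant} causes no trouble, which is handled by the closedness of $\Delta_{Fitt}(\pi)$ and the properness of $\pi$.
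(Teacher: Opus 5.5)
Your proof is correct and follows the paper's route. The paper identifies the non-smooth locus of the flat map $\pi$, namely $V(\mathrm{Fitt}_d(\Omega_{\mathcal Y/B}))$, with $\mathcal C(\pi)\cup Sing(\mathcal Y)$ using your two criteria: flat plus smooth fibre is equivalent to smooth, and at points of $\mathcal Y^{sm}$ smoothness is equivalent to a surjective differential; it then applies $\pi$, where you also write out the closure bookkeeping via properness. One small slip: in step (1) of $\supseteq$, EGA IV 17.5.1 justifies ``$\pi$ smooth at $y$'', while smoothness of $\mathcal Y$ at $y$ comes from composing with the smooth $B\to\mathrm{Spec}\,\Bbbk$; also, characteristic $0$ is not needed for the Jacobian criterion you use.
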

\begin{proof}
    When $y\in \mathcal Y^{sm}$,  since $\pi$ is flat and $B$ is smooth, $\pi$ is smooth at $y$ iff $d\pi_y$ is surjective. If $y\in Sing(\mathcal Y)$, then $\pi$ cannot be smooth at $y$. Hence we have $$V(Fitt_d(\Omega_{\mathcal Y/B}))=\mathcal C(\pi)\cup Sing(\mathcal Y).$$
    Then apply projection $\pi$, we obtain the claim.
\end{proof}
In general, we note that the smooth discriminant $\Delta(\pi)$ may not fully capture the information of singularities of $\mathcal Y$ as in the discriminant $\Delta_{Fitt}(\pi)$. See discussions in Remark \ref{remark_twistedcubicdual}.

\subsection{Finite branched cover}
In the case when $\pi$ is a branch cover ($d=0$), we recall the Zariski--Nagata purity theorem:
\begin{theorem}[Purity of the branch cover \cite{Zariski-purity,Nagata-purity}]\label{thm_Zariski-Nagata}
  Let $\pi: \mathcal{Y} \to B$ be a finite surjective morphism where $B$ is a smooth variety and $\mathcal{Y}$ is a normal variety. Let $U \subseteq B$ be the maximal open subset such that the restricted morphism $\pi^{-1}(U) \to U$ is étale. Then the branch locus $B\setminus U$ agrees set-theoretically with $\Delta_{Fitt}(\pi)$. Moreover, $\Delta_{Fitt}(\pi)$ is either empty or is a divisor of pure codimension one in $B$.
\end{theorem}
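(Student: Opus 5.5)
The plan has two parts: first identify the branch locus $B\setminus U$ with $\Delta_{Fitt}(\pi)$ set-theoretically, and then prove purity by a local argument at the generic point of a hypothetical branch component of codimension $\ge 2$.

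\textbf{Step 1: the branch locus equals $\Delta_{Fitt}(\pi)$.} Since $\pi$ is finite, the relative dimension is $d=0$. The zero locus of $\mathrm{Fitt}_0(\Omega_{\mathcal Y/B})$ is the support of $\Omega_{\mathcal Y/B}$, i.e.\ the ramification locus. Hence $\Delta_{Fitt}(\pi)$ is the image of the points where $\pi$ fails to be unramified.
\begin{itemize}
\item Étale implies unramified, which gives $\Delta_{Fitt}(\pi)\subseteq B\setminus U$.
\item For the converse, use the standard fact (SGA1, I.9.11) that a finite surjective unramified morphism from a normal irreducible variety to a normal variety is flat, hence étale.
\end{itemize}
To apply this fact, let $b\notin\Delta_{Fitt}(\pi)$. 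Then $\pi$ is unramified over a neighborhood of $b$, so it is étale there, and $b\in U$.

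\textbf{Step 2: reduction to a local statement.} Suppose some irreducible component $Z$ of $B\setminus U$ has codimension $\ge 2$. Let $\eta$ be the generic point of $Z$, and set $A=\mathcal O_{B,\eta}$. This is a regular local ring of dimension $n\ge 2$. Let $R=(\pi_*\mathcal O_{\mathcal Y})_\eta$, which is a finite $A$-algebra and a normal ring. By construction $\operatorname{Spec}R\to\operatorname{Spec}A$ is étale over the punctured spectrum $\operatorname{Spec}A\setminus\{\mathfrak m\}$. The goal is to show it is étale over $\mathfrak m$ too, which contradicts $\eta\in B\setminus U$.

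\textbf{Step 3: freeness and the trace discriminant.} Since $R$ is normal, it satisfies Serre's condition $S_2$ as an $A$-module, so $R$ is a reflexive $A$-module.
\begin{itemize}
\item \emph{Case $n=2$.} Reflexive modules over a regular local ring of dimension $2$ are free. So $R$ is free of rank $r$, and we may form the discriminant $\delta=\det(\mathrm{Tr}(e_ie_j))\in A$ of the trace pairing. For a finite free algebra, étaleness at a prime is equivalent to $\delta$ being a unit there. Thus the non-étale locus in $\operatorname{Spec}A$ is $V(\delta)$, which is either empty or of pure codimension one. Since we assumed it is exactly $\{\mathfrak m\}$, of codimension $2$, it must be empty, so $\pi$ is étale at $\eta$.
\item \emph{Globally.} The same argument shows that on the open set $B'\subseteq B$ where $\pi_*\mathcal O_{\mathcal Y}$ is locally free, the branch locus is the zero divisor of the discriminant section of $(\det\pi_*\mathcal O_{\mathcal Y})^{-2}$. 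Because $\pi_*\mathcal O_{\mathcal Y}$ is reflexive, $B\setminus B'$ has codimension $\ge 3$.
\end{itemize}

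\textbf{Main obstacle: $n\ge 3$.} What remains is the case where $\eta$ lies where $R$ need not be free. Here one needs the genuine purity statement: a finite étale cover of the punctured spectrum of a regular local ring of dimension $\ge 2$ extends to a finite étale cover of the whole spectrum (Zariski–Nagata; SGA2 X.3.4). I would try to prove this by induction on $n$.
\begin{itemize}
\item Choose a regular parameter $t\in\mathfrak m$ such that $A/tA$ is regular of dimension $n-1\ge 2$.
\item Use the $S_2$ property of $R$ to show that $R/tR$ is the ring of functions of the restricted cover of the punctured spectrum of $A/tA$.
\item Apply the induction hypothesis to conclude that $R/tR$ is finite étale over $A/tA$. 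Then Nakayama's lemma and the local flatness criterion lift freeness and unramifiedness from $R/tR$ to $R$.
\end{itemize}
The delicate point is the second bullet: controlling $R/tR$, i.e.\ checking depth conditions so that restriction to the hyperplane section commutes with taking reflexive hulls.
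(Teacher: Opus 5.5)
\textbf{The gap is the case $n\ge 3$, which is the whole content of the theorem.} The paper gives no proof of this statement (it cites Zariski and Nagata), so your argument has to stand on its own. Steps 1--3 are fine:
\begin{itemize}
\item the identification of $B\setminus U$ with the image of $\operatorname{Supp}\Omega_{\mathcal Y/B}$ via the unramified-over-normal-base criterion;
\item the localization at the generic point of a putative component of codimension $\ge 2$;
\item the trace-discriminant plus Krull argument wherever $R$ is free, in particular for $n=2$, which shows the non-free locus has codimension $\ge 3$.
\end{itemize}
The mechanism you propose for $n\ge 3$ cannot work. Put $P=\operatorname{Spec}A\setminus\{\mathfrak m\}$, $P_k=P\cap V(t^k)$ and $\mathcal F=\widetilde R|_P$. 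From $0\to\mathcal F\xrightarrow{t}\mathcal F\to\mathcal F/t\mathcal F\to 0$ and $\Gamma(P,\mathcal F)=R$ you get
\[
0\to R/tR\to\Gamma(P_1,\mathcal F/t\mathcal F)\to H^2_{\mathfrak m}(R)[t]\to 0 .
\]
Since $H^2_{\mathfrak m}(R)$ is $\mathfrak m$-power torsion, its $t$-torsion vanishes only if $H^2_{\mathfrak m}(R)=0$, i.e.\ only if $\operatorname{depth}_A R\ge 3$. Normality ($S_2$) gives depth $\ge 2$ and nothing more, and no choice of $t$ helps. Moreover, granting the induction hypothesis, the identity $R/tR=\Gamma(P_1,\mathcal F/t\mathcal F)$ is equivalent to $R$ being free; for $n=3$ it says exactly that $R$ is Cohen--Macaulay. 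So your ``delicate point'' is the theorem itself, not a depth bookkeeping check. As written, your argument only rules out branch components whose generic point lies where $\pi_*\mathcal O_{\mathcal Y}$ is locally free.

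\textbf{How to close it.} Avoid $R/tR$ and work with all thickenings, as in Grothendieck's Lefschetz argument (SGA2, Exp.\ X).
\begin{itemize}
\item Because $R$ is free on $P$ and $n\ge 3$, $H^2_{\mathfrak m}(R)$ has finite length. This follows from Grothendieck's finiteness theorem, or from local duality, since $\operatorname{Ext}^{n-2}_A(R,A)$ is supported at $\mathfrak m$.
\item By induction, $\Gamma(P_1,\mathcal F/t\mathcal F)$ is finite \'etale over $A/tA$. By invariance of finite \'etale covers under nilpotent thickenings, it extends to a finite \'etale $T_k$ over $A/t^kA$, with $\widetilde{T_k}|_{P_k}\cong\mathcal F/t^k\mathcal F$ since both lift the same cover of $P_1$.
\item Since $\operatorname{depth}A/t^kA\ge 2$, $T_k=\Gamma(P_k,\mathcal F/t^k\mathcal F)$. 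This gives exact sequences $0\to R/t^kR\to T_k\to H^2_{\mathfrak m}(R)[t^k]\to 0$, whose transition maps on the right are multiplication by $t$.
\item As $t$ is nilpotent on the finite-length module $H^2_{\mathfrak m}(R)$, the inverse limit of the right-hand terms is $0$. Hence the $t$-adic completion of $R$ equals $\varprojlim T_k$, which is finite \'etale over the $t$-adic completion of $A$.
\item Faithfully flat descent then makes $R$ finite \'etale over $A$, contradicting $\eta\in B\setminus U$.
\end{itemize}
Alternatively, cite SGA2 X.3.4 or Auslander's homological proof for this step, which is effectively what the paper does.
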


\begin{definition}\normalfont
    For a dominant proper morphism $\pi:\mathcal Y\to B$, we call its discriminant \textit{clean} if $\pi(Sing(\mathcal Y))$ is contained in the smooth discriminant, in other words, if set-theoretically one has
    $$\Delta_{Fitt}(\pi)=\Delta(\pi).$$
In this case, we will unambiguously call $\Delta(\pi)$ the discriminant of $\pi$.
\end{definition}
In Lemma \ref{lemmma_disc_vs_smoothdisc}, we see that the discriminant is clean when $\mathcal Y$ is smooth. We provide another case.
\begin{lemma}\label{lemma_finite-clean}
   Let $X\subseteq \mathbb P^N$ be a normal projective variety of dimension $n$. Let $$\pi:X\to \mathbb P^n$$ be the generic projection (note this is a finite dominant morphism). Then the discriminant $\Delta(\pi)$ is clean.
\end{lemma}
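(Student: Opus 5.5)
The plan is to combine the Zariski--Nagata purity theorem (Theorem \ref{thm_Zariski-Nagata}) with a dimension count: the singular locus of a normal variety is too small to produce branch divisor components on its own. Since $X$ is normal and $\pi$ is finite surjective onto the smooth variety $\mathbb P^n$, Theorem \ref{thm_Zariski-Nagata} applies. Let $U\subseteq\mathbb P^n$ be the maximal open set over which $\pi$ is étale. Then $\Delta_{Fitt}(\pi)=\mathbb P^n\setminus U$ set-theoretically, and this set is either empty or of pure codimension one. Note that Lemma \ref{lemmma_disc_vs_smoothdisc} cannot be used directly, since a finite map from a normal variety need not be flat. So I will work with the branch locus $\mathbb P^n\setminus U$ instead.

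\emph{Step 1: pointwise comparison.} For $y\in X^{sm}$, I claim $\pi$ is étale at $y$ if and only if $d\pi_y$ is surjective. Indeed, $\dim X=n$. If $d\pi_y$ is surjective, it is an isomorphism of $n$-dimensional tangent spaces, so $\pi$ is unramified at $y$. Moreover $\pi$ is flat at $y$ by miracle flatness: it is a quasi-finite map between regular local rings of the same dimension. Hence $\pi$ is étale at $y$. The converse is clear. For $y\in Sing(X)$, $\pi$ is never étale at $y$, since étale over the smooth $\mathbb P^n$ would force $X$ to be smooth at $y$. Consequently
$$\mathbb P^n\setminus U=\pi(\mathcal C^{\circ})\cup\pi(Sing(X)),$$
where $\mathcal C^{\circ}$ denotes the set of points of $X^{sm}$ at which $d\pi$ is not surjective. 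In particular $\Delta(\pi)\subseteq\Delta_{Fitt}(\pi)$, because the left side is closed and $\Delta(\pi)$ is the closure of $\pi(\mathcal C^{\circ})$.

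\emph{Step 2: dimension count.} Since $X$ is normal, $\textup{codim}_X Sing(X)\ge 2$. Since $\pi$ is finite, $\dim\pi(Sing(X))\le n-2$. If the branch locus is empty, then by Step 1 so is $Sing(X)$, and there is nothing to prove. Otherwise, let $D$ be any irreducible component of $\mathbb P^n\setminus U$; by purity, $\dim D=n-1$. Then $D\not\subseteq\pi(Sing(X))$, so a general point $b\in D$ lies outside $\pi(Sing(X))$. By Step 1, $b$ lies in $\pi(\mathcal C^{\circ})$, so $D\subseteq\overline{\pi(\mathcal C^{\circ})}=\Delta(\pi)$. Taking the union over all components gives $\Delta_{Fitt}(\pi)\subseteq\Delta(\pi)$. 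In particular $\pi(Sing(X))\subseteq\Delta(\pi)$, which is exactly cleanness.

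The only delicate point is Step 1, namely the identification of non-étale points on $X^{sm}$ with critical points of $d\pi$ without assuming global flatness. Miracle flatness at smooth points handles this. Genericity of the projection is used only to guarantee that $\pi$ is a finite surjective morphism, which is needed for Theorem \ref{thm_Zariski-Nagata}.
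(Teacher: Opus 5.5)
Your proof is correct and follows the same route as the paper. Normality gives $\dim\pi(Sing(X))\le n-2$, and Zariski--Nagata purity then forces every component of the pure-codimension-one branch locus into $\Delta(\pi)$. Your Step 1, using miracle flatness at smooth points, also supplies the decomposition $\Delta_{Fitt}(\pi)=\Delta(\pi)\cup\pi(Sing(X))$, which the paper uses implicitly even though Lemma \ref{lemmma_disc_vs_smoothdisc} is only stated for flat morphisms.
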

\begin{proof}
   By our assumption that $X$ is normal and projection center is general, then the singular locus of the total space $Sing(X)$ is finite onto its projection image, and hence has image at most $n-2$. This is a closed subspace of $\Delta_{Fitt}(\pi)$, but by the purity Theorem \ref{thm_Zariski-Nagata}, this must be contained in $\Delta(\pi)$.
\end{proof}

\begin{remark}\normalfont
   For all the results in the Introduction, the discriminant of a generic projection is clean, hence $\Delta(\pi)=\Delta_{Fitt}(\pi)$ is unambiguous. 
\end{remark}


For the rest of the paper, we use $\Delta(\pi)$ to denote the smooth discriminant of the generic projection $\pi:X\dashrightarrow\mathbb P^k$ of an irreducible projective variety $X$.

\section{Duality of smooth discriminant}\label{sec_main-thm}
In this section, we prove Theorems \ref{thm_dual}. In fact, we will prove a general statement (Theorem \ref{thm_mainthmequiv}) for projective duality of the \textit{smooth discriminant} of a generic projection of an \textit{irreducible} projective variety $X\subseteq\mathbb P^N$ over an algebraically closed field with char $0$. $X$ is not necessarily smooth.

\subsection{The setup}
 Throughout this section, we use the convention opposite to that of the introduction: We take a linear section of $X$ and a generic projection of the dual variety $X^{\perp}$ to a lower-dimensional linear subspace. (Note that by classical projective duality, we can switch the roles of $X$ and $X^{\perp}$.)

Let $\mathbb PV$ be a linear subspace of $\mathbb P^N$.
We consider the intersection $X\cap \mathbb PV$ and denote it as $X_V$. We denote $X_{V}^{\perp}$ the dual variety with respect to the embedding $X_V\subseteq \mathbb PV$. We denote $I_\mathbb P^N$ to be the "0-th order" incidence subvariety
$$I_{\mathbb P^N}:=\{(x,H)\in \mathbb P^N\times (\mathbb P^N)^{\vee}: x\in H\}.$$
Similar for $I_{\mathbb PV}$. The conormal varieties $N_{\mathbb P^N}^{\perp}X$ and $N^{\perp}_{\mathbb PV}(X_V)$ are the closure of unions of conormal spaces at smooth points of $X$ and $X_V$ respectively \eqref{eqn_conormal}. They can be regarded as "1st order" incidence variety and are naturally included in $I_{\mathbb P^N}$ and $I_{\mathbb PV}$. 

\begin{figure}[ht]
    \centering
\begin{tikzcd}
&&N_{\mathbb P^N}^{\perp}X\arrow[dll,"\pi_1"']\arrow[drr,"\pi_2"]\\
X \arrow[r,symbol=\subseteq] &\mathbb P^{N} &I_{\mathbb P^N}\arrow[l]\arrow[r]\arrow[u,symbol=\supseteq]&
(\mathbb P^N)^{\vee}\arrow[d,dashrightarrow,"p"]\arrow[r,symbol=\supseteq]& X^{\perp} \arrow[dl,dashrightarrow,"\pi"]\\
X_V\arrow[r,symbol=\subseteq]\arrow[u,symbol=\subseteq]&\mathbb PV\arrow[u,symbol=\subseteq] &I_{\mathbb PV}\arrow[l]\arrow[r]\arrow[d,symbol=\supseteq]& (\mathbb PV)^{\vee}\arrow[r,symbol=\supseteq] & X^{\perp}_V\\
&&N^{\perp}_{\mathbb PV}(X_V)\arrow[ull]\arrow[urr]
\end{tikzcd}
\caption{Linear section of $X$ and generic projection of $X^{\perp}$\label{fig:generic-projection}}
\end{figure}
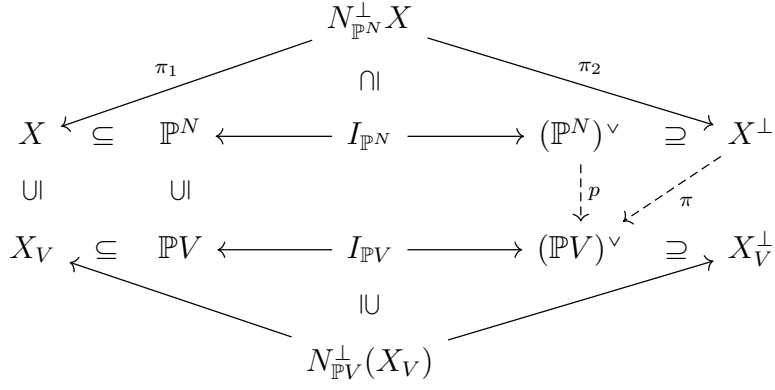

Natural inclusions, projections and correspondence form the commutative diagram \ref{fig:generic-projection}. Here, $p$ is the projection from the center $\mathbb PV^{\perp}$. The target is identified to $(\mathbb PV)^{\vee}$ according to Lemma \ref{lemma_V*and-quotient}. The rational map $\pi$ is the restriction of $p$ to the dual variety $X^{\perp}$.

In the setup described above, our main result is 

\begin{theorem}[cf. Theorem \ref{thm_dual}]\label{thm_mainthmequiv} Let $X\subseteq\mathbb P^N$ be an irreducible projective variety.
For general choice of linear subspace $\mathbb PV\subseteq \mathbb P^N$, the smooth discriminant $\Delta(\pi)$ of generic projection $$\pi:X^{\perp}\dashrightarrow  (\mathbb PV)^{\vee}$$ coincides with the dual variety $X^{\perp}_V$ of the linear section $X_V=X\cap \mathbb PV$ of $X$. In other words, 
$$\Delta(\pi)= X_V^{\perp}$$
as subvarieties of $\mathbb PV^{\vee}$.
\end{theorem}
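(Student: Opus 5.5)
The plan is to compute the critical points of $\pi$ on the smooth locus of $X^{\perp}$ directly with Lemma \ref{lmm_linear_PD}, and then translate the tangency condition back to $X$ using biduality in the form \eqref{eqn_classical-proj-dual}. Set $n=\dim X$, $\dim \mathbb PV=k$, and $p$ the projection from $\mathbb PV^{\perp}$. For $\alpha\in (X^{\perp})^{sm}\setminus \mathbb PV^{\perp}$, the differential $d\pi_\alpha$ fails to be surjective exactly when the join of $T_\alpha X^{\perp}$ and the center $\mathbb PV^{\perp}$ is not all of $(\mathbb P^N)^{\vee}$. Equivalently, some hyperplane of $(\mathbb P^N)^{\vee}$ contains both. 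The hyperplanes of $(\mathbb P^N)^{\vee}$ containing $\mathbb PV^{\perp}$ are exactly the $x^{\perp}$ with $x\in\mathbb PV$. Hence $\alpha$ is critical iff there is $x\in\mathbb PV$ with $T_\alpha X^{\perp}\subseteq x^{\perp}$.

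For $(x,\alpha)$ in the good open set $(N^{\perp}_{\mathbb P^N}X)^{\circ}$, biduality \eqref{eqn_classical-proj-dual} turns this into $x\in X\cap \mathbb PV=X_V$ with $T_xX\subseteq \alpha^{\perp}$.

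Next, $\pi(\alpha)$ corresponds to the hyperplane $\alpha^{\perp}\cap\mathbb PV$ of $\mathbb PV$. For general $V$, Bertini gives $T_xX_V=T_xX\cap\mathbb PV$ at smooth points $x$ of $X_V$. Therefore $\alpha^{\perp}\cap \mathbb PV\supseteq T_xX_V$, i.e. $\pi(\alpha)\in X_V^{\perp}$. This gives a dense part of $\mathcal C(\pi)$ mapping into $X_V^{\perp}$.

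For the converse, take a general point $(x,\beta)$ of $N^{\perp}_{\mathbb PV}(X_V)$, so $\beta\subseteq\mathbb PV$ is a hyperplane containing $T_xX_V=T_xX\cap \mathbb PV$. The span of $T_xX$ and $\beta$ has dimension $n+(k-1)-(n+k-N)=N-1$. So it is a unique hyperplane $\alpha^{\perp}$ of $\mathbb P^N$ with $\alpha^{\perp}\supseteq T_xX$ and $\alpha^{\perp}\cap \mathbb PV=\beta$. Then $\alpha\in X^{\perp}$, $p(\alpha)=\beta$, and by the previous paragraph $\alpha$ is critical, provided $(x,\alpha)$ lies in $(N^{\perp}_{\mathbb P^N}X)^{\circ}$. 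Taking closures then yields both inclusions $\Delta(\pi)\subseteq X_V^{\perp}$ and $X_V^{\perp}\subseteq\Delta(\pi)$.

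The main obstacle is this genericity bookkeeping. I must show that, for general $V$, the locus
$$Z_V=\{(x,\alpha)\in N^{\perp}_{\mathbb P^N}X : x\in\mathbb PV\}$$
meets the open set $(N^{\perp}_{\mathbb P^N}X)^{\circ}$ densely, and also avoids $\alpha\in\mathbb PV^{\perp}$ densely. Similarly, general points of $N^{\perp}_{\mathbb PV}(X_V)$ must come from such good pairs. I would handle this by a dimension count in the incidence correspondence over the Grassmannian of $\mathbb PV$'s. The group $PGL_{N+1}$ acts transitively on choices of $V$, so Kleiman's transversality applied to $\pi_1^{-1}(\mathbb PV)$ shows that each component of $Z_V$ has the expected dimension and meets any fixed dense open subset densely. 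The correspondence $(x,\beta)\mapsto(x,\alpha)$ built above is an isomorphism between $Z_V$ and $N^{\perp}_{\mathbb PV}(X_V)$ over the locus where $x\in X^{sm}$ and $\mathbb PV$ is transverse to $T_xX$; its inverse is $(x,\alpha)\mapsto (x,\alpha^{\perp}\cap\mathbb PV)$. Density statements transfer across it.

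In the degenerate situations, namely $X_V$ empty or $\dim X_V=0$, both sides are computed directly. If $X_V$ is empty, the dimension count shows $\dim X^{\perp}<\dim\mathbb PV^{\vee}$, which I expect also requires a short separate check. Then $\pi$ is non-dominant and $\Delta(\pi)=\pi(X^{\perp})$ is read off from the convention in Definition \ref{def_discriminant}.
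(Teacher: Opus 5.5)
Your main line is essentially the paper's proof. The pieces match as follows:
\begin{itemize}
\item the criterion that $\alpha$ is critical iff some $x^{\perp}$ with $x\in\mathbb PV$ contains $T_\alpha X^{\perp}$ is Lemma~\ref{lemma_C(pi)general};
\item the biduality step is Proposition~\ref{prop_critical-locus};
\item the tangency of $\alpha^{\perp}\cap\mathbb PV$ to $X_V$ is Lemma~\ref{lemma_maptodual};
\item the lift $\alpha^{\perp}=\mathrm{Span}(T_xX,\beta)$ is Proposition~\ref{prop_critical-dom}. Your dimension count showing this span is a hyperplane is a cleaner form of what the paper does later in Proposition~\ref{prop_one-to-one}.
\end{itemize}
Kleiman transversality in place of the inductive choice of hyperplanes in Lemma~\ref{lemma_generalityV} is fine.

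One step needs tightening: the direction ``critical $\Rightarrow x\in X_V$''. You cannot appeal to \eqref{eqn_classical-proj-dual} on $(N^{\perp}_{\mathbb P^N}X)^{\circ}$ here. Membership in that set presupposes $x\in X^{sm}$, which is exactly what must be shown. Your density statement about $Z_V$ also only applies once the witness pair is known to lie in $Z_V$. Use reflexivity in its conormal form, as the paper does. The conditions $\alpha\in (X^{\perp})^{sm}$ and $T_\alpha X^{\perp}\subseteq x^{\perp}$ give $(\alpha,x)\in N^{\perp}(X^{\perp})$. This is $N^{\perp}_{\mathbb P^N}X$ with the factors swapped, so $x\in X_V$ and $(x,\alpha)\in Z_V$.

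Your separate treatment of $X_V=\emptyset$ is wrong. If $X\cap\mathbb PV=\emptyset$ for general $\mathbb PV$, then $n+k<N$. Since $X^{\perp}$ contains the conormal spaces $(T_xX)^{\perp}\cong\mathbb P^{N-n-1}$, you get $\dim X^{\perp}\ge N-n-1\ge k=\dim\mathbb PV^{\vee}$, the opposite of the inequality you claim. For example, take a twisted cubic in $\mathbb P^3$ with $k=1$: then $X^{\perp}$ is a surface mapping dominantly onto $\mathbb P^1$.

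Your conclusion would also contradict the statement itself. If $\pi$ were non-dominant, the convention of Definition~\ref{def_discriminant} would give $\Delta(\pi)=\pi(X^{\perp})\neq\emptyset=X_V^{\perp}$.

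No special case is needed. Your criterion already shows there are no critical points in $(X^{\perp})^{sm}\setminus\mathbb PV^{\perp}$, so $\Delta(\pi)=\emptyset=X_V^{\perp}$. This also forces $\pi$ to be dominant: a surjective differential at a smooth point makes $\pi$ smooth, hence open, near that point. The case $\dim X_V=0$ is likewise covered by the general argument.
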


Note that Theorem \ref{thm_dual} follows from the above, together with classical projective duality \ref{thm_classicalPD}. To prove Theorem \ref{thm_mainthmequiv}, the key is to describe the smooth critical locus of the generic projection $\pi$ using an incidence relation (cf. Proposition \ref{prop_critical-locus}). Then the proof follows from classical projective duality and the generality lemma for the choice of $\mathbb PV$.

\subsection{The smooth critical locus}
We begin with introducing a subvariety of the dual variety $X^{\perp}$ that captures the information of the dual correspondence of $X$ on the subspace $\mathbb PV$.
\begin{definition}\normalfont\label{def_critlocus}
    We define the subvariety $(X|_V)^{\perp}$ of $X^{\perp}$ to be 
    $$(X|_V)^{\perp}:=\pi_2(\pi_1^{-1}(X_V))=\overline{\bigcup_{x\in X_V^{sm}}(T_xX)^{\perp}}.$$
    In other words, $(X|_V)^{\perp}$ is the closure of the union of conormal spaces of $X$ at points contained in $\mathbb PV$.
\end{definition}

\begin{lemma}[Generality of the linear section]\label{lemma_generalityV}
Let
\[
U:=(N_{\mathbb P^N}^{\perp}X)^\circ
\subset N_{\mathbb P^N}^{\perp}X
\]
be the dense open subset consisting of pairs $(x,\alpha)$ such that
$x\in X^{sm}$, $\alpha\in (X^\perp)^{sm}$, and $T_xX\subseteq \alpha^\perp$.
Then for a general linear subspace $\mathbb PV\subset \mathbb P^N$, the following hold:

\begin{itemize}
\item[(1)] The intersection
$U\cap \pi_1^{-1}(X_V)$
is dense in $\pi_1^{-1}(X_V)$.

\item[(2)] The intersection
$X^{sm}\cap X_V$
is dense in $X_V$.
\end{itemize}

Consequently:

\begin{itemize}
\item[(i)] a general point $\alpha\in (X|_V)^\perp$ satisfies
\[
\alpha\in (X^\perp)^{sm},
\qquad
\exists\, x\in X^{sm}\cap \mathbb PV
\text{ such that } T_xX\subseteq \alpha^\perp;
\]

\item[(ii)] a general point $\alpha_V\in X_V^\perp$ is represented by a hyperplane
$H_V\subset \mathbb PV$ tangent to $X_V$ at some smooth point 
$x\in X_V^{sm}\cap X^{sm}$.
\end{itemize}
\end{lemma}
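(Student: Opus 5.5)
The plan is to reduce everything to a single Bertini-type genericity statement: a general linear subspace meets a fixed proper closed subset of a variety in a proper closed subset of the section. Let $Z:=N_{\mathbb P^N}^{\perp}X\setminus U$. This is a proper closed subset of the irreducible variety $N_{\mathbb P^N}^{\perp}X$. It is proper because $U$ is the intersection of the nonempty opens $\pi_1^{-1}(X^{sm})$ and $\pi_2^{-1}((X^\perp)^{sm})$. The second of these is nonempty since $\pi_2$ is surjective onto $X^\perp$.

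Write $r=\operatorname{codim}\mathbb PV$. Since $\pi_1^{-1}(X_V)=\pi_1^{-1}(\mathbb PV)$, statement (1) follows if we show that for general $\mathbb PV$ every irreducible component of $\pi_1^{-1}(\mathbb PV)$ has dimension $N-1-r$, while $Z\cap\pi_1^{-1}(\mathbb PV)$ has dimension at most $\dim Z-r<N-1-r$.

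For this, consider the incidence variety $\mathcal W=\{(w,\mathbb PV): \pi_1(w)\in\mathbb PV\}\subset W\times G$, for $W$ equal to $N_{\mathbb P^N}^{\perp}X$ or to $Z$, where $G$ is the Grassmannian of codimension-$r$ subspaces. The projection $\mathcal W\to W$ is a Grassmannian bundle whose fibers all have codimension $r$ in $G$, because $G$ acts transitively on points of $\mathbb P^N$. Hence $\dim\mathcal W=\dim W+\dim G-r$.

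Generic fiber dimension for $\mathcal W\to G$ gives the upper bounds $\dim W-r$ for the general fiber (or emptiness). For the lower bound on $W=N_{\mathbb P^N}^{\perp}X$, each component of the fiber is locally cut out by $r$ equations pulled back via $\pi_1$. Therefore every component of $\pi_1^{-1}(\mathbb PV)$ has dimension at least $N-1-r$.

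Combining the two bounds, no component of $\pi_1^{-1}(X_V)$ lies in $Z$, which gives (1). Statement (2) is the same argument applied to $X$ and its proper closed subset $\operatorname{Sing}X$. Here one uses that each component of $X_V$ has dimension at least $\dim X-r$, while $\dim(\operatorname{Sing}X\cap\mathbb PV)\le\dim\operatorname{Sing}X-r$.

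For the consequences, (i) follows from (1). Since $(X|_V)^\perp=\pi_2(\pi_1^{-1}(X_V))$ and $U\cap\pi_1^{-1}(X_V)$ is dense, a general $\alpha$ in the image is $\pi_2$ of a point $(x,\alpha)\in U$ with $x\in\mathbb PV$. By the definition of $U$, this means $\alpha$ is smooth on $X^\perp$, $x\in X^{sm}\cap\mathbb PV$ and $T_xX\subseteq\alpha^\perp$. One point to check is that the closure in Definition \ref{def_critlocus} agrees with $\pi_2(\pi_1^{-1}(X_V))$; this holds because $\pi_2$ is proper and $X^{sm}\cap X_V$ is dense in $X_V$ by (2).

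For (ii), by definition $X_V^\perp$ is the closure of the set of hyperplanes tangent to $X_V$ at points of $X_V^{sm}$. By (2), $X^{sm}\cap X_V^{sm}$ is a dense open subset of $X_V^{sm}$. So the conormal pairs of $X_V$ over this open set are dense in $N^\perp_{\mathbb PV}(X_V)$, and a general point of its image $X_V^\perp$ comes from such a pair.

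The main obstacle is the lower bound on the dimensions of components of $\pi_1^{-1}(\mathbb PV)$. Without it, a component of the preimage could collapse into $Z$, so the equidimensionality must be derived carefully from the local description by $r$ equations. One might also add Bertini's irreducibility, which gives that $X_V$ is irreducible when $\dim X>r$. Positive-dimensional components are not needed, however: if $X_V$ is finite, the density statements hold trivially once $\mathbb PV$ avoids $\operatorname{Sing}X$ and $\pi_1(Z)$ outside a proper subset, which is again the dimension count.
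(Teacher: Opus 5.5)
Your proposal is correct and is essentially the paper's argument. Both show that a general linear section of $N^\perp_{\mathbb P^N}X$ (resp.\ of $X$) has no irreducible component inside the proper closed set $N^\perp_{\mathbb P^N}X\setminus U$ (resp.\ $\operatorname{Sing}X$), and then push forward by $\pi_2$. The paper chooses the divisors $\pi_1^*H_i$ one at a time, whereas you do the dimension count at once via the incidence variety and Krull's bound, which spells out what the paper's inductive choice leaves implicit.

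One small correction: identifying $\pi_2(\pi_1^{-1}(X_V))$ with the closure of $\bigcup_x (T_xX)^\perp$ follows from (1), since $U\subseteq \pi_1^{-1}(X^{sm})$, not from (2). Density of $X^{sm}\cap X_V$ in $X_V$ alone does not control the possibly large fibers of $\pi_1$ over $\operatorname{Sing}X$.
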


\begin{proof}
Let $r=N-k$. Choose general hyperplanes $H_1,\dots,H_r\subset \mathbb P^N$
cutting out $\mathbb PV$, and write
$D_i:=\pi_1^*H_i \in |\pi_1^*\mathcal O_{\mathbb P^N}(1)|$. Then
\[
\pi_1^{-1}(X_V)=N_{\mathbb P^N}^{\perp}X\cap D_1\cap\cdots\cap D_r.
\]
Since $U$ is a dense open subset of the irreducible variety
$N_{\mathbb P^N}^{\perp}X$, we may choose the divisors $D_i$ inductively so that
no irreducible component of the intermediate intersections is contained in the
closed subset $N_{\mathbb P^N}^{\perp}X\setminus U$.
It follows that
$U\cap \pi_1^{-1}(X_V)$
is dense in $\pi_1^{-1}(X_V)$, proving (1).
Applying $\pi_2$, whose image is $(X|_V)^\perp$, gives the consequence (i).

Similarly, viewing $X_V=X\cap H_1\cap\cdots\cap H_r$, we may choose the same
general hyperplanes inductively so that no irreducible component of the
intermediate intersections is contained in the closed subset $Sing(X)\subset X$.
Therefore $X^{sm}\cap X_V$
is dense in $X_V$, proving (2).
For (ii), note that
$X_V^{sm}\cap X^{sm}$
is a dense open subset of $X_V$ by (2), hence also dense in $X_V^{sm}$. The rest follows from definition of the dual variety $X_{V}^{\perp}$.
\end{proof}

We first observe that
\begin{lemma}\label{lemma_maptodual}
    The $\pi$-image of $(X|_V)^{\perp}$ is contained in $X_{V}^{\perp}$. In other words, there is the following commutative diagram
\begin{figure}[ht]
    \centering
\begin{equation}\label{diagram_crit-locus}
\begin{tikzcd}
X^{\perp}\arrow[d,dashrightarrow,"\pi"]\arrow[r,hookleftarrow]&(X|_V)^{\perp} \arrow[d,dashrightarrow,"\pi|_{(X|_V)^{\perp}}"]\\
(\mathbb PV)^{\vee} \arrow[r,hookleftarrow]& X^{\perp}_V,
\end{tikzcd}
\end{equation}
\end{figure}
   where the horizontal maps are inclusions, and the vertical ones are rational maps restricted from the linear projection $p$.
\end{lemma}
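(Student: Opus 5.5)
The idea is to check the inclusion on a dense open subset of $(X|_V)^{\perp}$ and then pass to closures. Since $\pi$ is the restriction of the linear projection $p$, which is a morphism away from $\mathbb PV^{\perp}$, the image $\pi((X|_V)^{\perp})$ means the closure of the image of the open set where $p$ is defined. Because $X_V^{\perp}$ is closed in $(\mathbb PV)^{\vee}$, it suffices to show that for $\alpha$ in a dense open subset of $(X|_V)^{\perp}$ with $\alpha\notin\mathbb PV^{\perp}$, we have $p(\alpha)\in X_V^{\perp}$.

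By Lemma \ref{lemma_generalityV}(1), the set $U\cap\pi_1^{-1}(X_V)$ is dense in $\pi_1^{-1}(X_V)$. Moreover, Lemma \ref{lemma_generalityV}(2) lets us restrict further to pairs $(x,\alpha)$ with $x\in X^{sm}\cap X_V^{sm}$, since this is a dense open subset of $X_V$ and its preimage under $\pi_1$ is open. Its preimage is also dense in $\pi_1^{-1}(X_V)$ for general $\mathbb PV$, by the same inductive generality argument used in that lemma. Removing $\pi_2^{-1}(\mathbb PV^{\perp})$ keeps a dense open subset, since $\mathbb PV^{\perp}$ is a general linear subspace and so does not contain any component of $(X|_V)^{\perp}$. (If $(X|_V)^{\perp}\subseteq \mathbb PV^{\perp}$ the statement is vacuous.) Applying $\pi_2$, the images of these pairs form a dense subset of $(X|_V)^{\perp}$.

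Now take such a pair $(x,\alpha)$: $x\in X_V^{sm}\cap X^{sm}$, $T_xX\subseteq\alpha^{\perp}$, and $\mathbb PV\not\subseteq\alpha^{\perp}$. As in the description of $p$ after \eqref{eqn_p}, $p(\alpha)$ is the hyperplane $H_V=\alpha^{\perp}\cap\mathbb PV$ of $\mathbb PV$. The key step is the tangent-space inclusion
\[
T_x(X_V)\subseteq T_xX\cap \mathbb PV \subseteq \alpha^{\perp}\cap\mathbb PV = H_V .
\]
The first inclusion holds because $X_V=X\cap\mathbb PV$ is a subscheme of both $X$ and $\mathbb PV$. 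Hence $H_V$ is tangent to $X_V$ at the smooth point $x$, so $(x,H_V)$ lies in the conormal variety $N^{\perp}_{\mathbb PV}(X_V)$, and therefore $p(\alpha)\in X_V^{\perp}$.

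Taking closures finishes the proof. The only delicate point is the bookkeeping of the generic open sets. In particular, we must ensure that the tangent space of the reduced variety $X_V$ at $x$ is the one governed by the scheme-theoretic intersection; this is fine because $x$ is a smooth point of $X_V$, and any point of $X_V^{sm}$ works. We also must make sure that excluding the base locus does not remove a whole component, which follows from the generality of $\mathbb PV$.
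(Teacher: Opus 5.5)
Your proof is correct and follows essentially the paper's argument. For a general $\alpha\in (X|_V)^{\perp}\setminus \mathbb PV^{\perp}$ with a witness $x\in X_V^{sm}\cap X^{sm}$ satisfying $T_xX\subseteq\alpha^{\perp}$, one has $T_x(X_V)\subseteq T_xX\cap\mathbb PV\subseteq\alpha^{\perp}\cap\mathbb PV=p(\alpha)$, so $p(\alpha)\in X_V^{\perp}$, and taking closures finishes (you only need this inclusion, not the equality $T_x(X_V)=T_xX\cap\mathbb PV$ the paper asserts, and you handle the base locus more explicitly than the paper does).
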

\begin{proof}
Note that for $x\in X_V^{sm}$, a hyperplane $H\subseteq \mathbb P^N$ containing the tangent space $T_xX$ also contains $T_x(X_V)$. Since $T_x(X_V)=T_xX\cap \mathbb PV$, we have $H\cap \mathbb PV$ contains $T_x(X_V)$ as well. In addition, by \eqref{eqn_p}, as long as $H\nsupseteq \mathbb PV$ (which is equivalent to $H^{\perp}$ is not in the base locus of $\pi$), $\pi(H^{\perp})$ defines a hyperplane $(H\cap \mathbb PV)^{\perp}$ of $\mathbb PV$. Now by Lemma \ref{lemma_generalityV}, a general point $H^{\perp}
\in (X|_V)^{\perp}$ corresponds to a smooth point $x$ of $X_V$ and satisfies $T_xX\subseteq H$, then $T_x(X_V)\subseteq H\cap \mathbb PV$. In other words, $\pi(H^{\perp})$ defines a hyperplane tangent to $X_V$ at $x$, hence $\pi(H^{\perp})\in X_{V}^{\perp}$.
\end{proof}

To describe the map $\pi|_{(X|_V)^{\perp}}$, we choose a general point $x\in X_V$ such that $x$ is a smooth point for both $X_V$ and $X$, then by the description of the map \eqref{eqn_p}, the restriction $\pi|_{(X|_V)^{\perp}}$ to $(T_xX)^{\perp}$ is a linear map between two conormal spaces at $x$:
\begin{figure}[ht]
    \centering
\begin{equation}\label{eqn_conormal-map}
\begin{tikzcd}
(T_xX)^{\perp}\arrow[equal]{r}\arrow[d,"\pi|_{(T_xX)^{\perp}}"]& \{H\in (\mathbb P^N)^{\vee}|x\in H\ \textup{and}\ T_xX\subseteq H\}\arrow[d]& H\arrow[d,mapsto]\arrow[l,symbol=\in]\\
(T_xX_V)^{\perp} \arrow[equal]{r}& \{H_{V}\in \mathbb PV^{\vee}|x\in H_V\ \textup{and}\ T_x(X_V)\subseteq H_V\}&H\cap \mathbb PV\arrow[l,symbol=\in].
\end{tikzcd}
\end{equation}
\end{figure}



Then up to further restricting to a Zariski open subspace, $\pi|_{(X|_V)^{\perp}}$ is the union of all the linear maps \eqref{eqn_conormal-map} when $x$ runs in smooth locus of $X_V$:
\begin{equation}\label{eqn_union}
    \bigcup_{x\in X_V^{sm}}(T_xX)^{\perp}\to \bigcup_{x\in X_V^{sm}}(T_xX_V)^{\perp}.
\end{equation}

Next we observe that
\begin{proposition}\label{prop_critical-dom}
  The map  $\pi|_{(X|_V)^{\perp}}$  is dominant.
\end{proposition}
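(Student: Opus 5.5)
The plan is to show that for a general point $x\in X_V^{sm}\cap X^{sm}$, the linear map \eqref{eqn_conormal-map} from $(T_xX)^{\perp}$ to $(T_xX_V)^{\perp}$ is surjective. By \eqref{eqn_union}, the image of $\pi|_{(X|_V)^{\perp}}$ then contains $\bigcup_{x}(T_xX_V)^{\perp}$ over a dense open set of smooth points of $X_V$. That union is dense in $X_V^{\perp}$ by Lemma \ref{lemma_generalityV}(ii). Combined with Lemma \ref{lemma_maptodual}, which says the image lies in $X_V^{\perp}$, this gives dominance.

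The surjectivity is linear algebra, done fiberwise at a fixed $x$. Work on the level of vector spaces, with $\hat T$ the affine cone of $T_xX$ in $\Bbbk^{N+1}$ and $\hat V$ the subspace with $\mathbb PV=\mathbb P\hat V$. By Lemma \ref{lemma_V*and-quotient}, restriction of functionals $(\Bbbk^{N+1})^{\vee}\to \hat V^{\vee}$ is surjective, and this is what induces $p$. A functional $\beta\in\hat V^{\vee}$ vanishing on $\hat T\cap \hat V$ must extend to a functional on $\Bbbk^{N+1}$ vanishing on $\hat T$. To see this, first define it on $\hat T+\hat V$ by setting it to $0$ on $\hat T$ and $\beta$ on $\hat V$; this is well defined precisely because $\beta$ kills $\hat T\cap\hat V$. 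Then extend arbitrarily to all of $\Bbbk^{N+1}$. Hence the restriction map $\hat T^{\perp}\to(\hat T\cap\hat V)^{\perp_{\hat V}}$ is surjective.

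It remains to identify $\hat T\cap \hat V$ with the cone over $T_x(X_V)$. This uses $T_x(X_V)=T_xX\cap\mathbb PV$, i.e.\ that $\mathbb PV$ meets $X$ transversally at a general point of $X_V$. This holds by Bertini for general $\mathbb PV$ at points of $X^{sm}\cap X_V$, which are dense by Lemma \ref{lemma_generalityV}(2); it is also the identity already used in the proof of Lemma \ref{lemma_maptodual}.

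Projectivizing, the functionals in $\hat T^{\perp}$ whose restriction to $\hat V$ is zero form the locus $(T_xX)^{\perp}\cap\mathbb PV^{\perp}$, which is exactly the indeterminacy of $p$. Outside it, the induced rational map $(T_xX)^{\perp}\dashrightarrow (T_xX_V)^{\perp}$ is a linear projection with surjective underlying linear map, hence dominant onto, and in fact surjective onto, $(T_xX_V)^{\perp}$. For dimensions: $(T_xX_V)^{\perp}$ has dimension $k-\dim X_V-1$, while $(T_xX)^{\perp}$ has dimension $N-\dim X-1$, and the difference $N-k$ is the dimension of the kernel. I expect the only delicate step to be the transversality identification $T_x(X_V)=T_xX\cap\mathbb PV$ at general points. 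If $X_V$ is empty, both sides are empty and the statement is vacuous.
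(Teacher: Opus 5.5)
Your proof is correct and is essentially the paper's argument. The paper lifts a hyperplane $H_V\subset\mathbb PV$ tangent to $X_V$ at $x$ to any hyperplane $H\supseteq\textup{Span}(H_V,T_xX)$; this is exactly your extension of a functional from $\hat T+\hat V$, and your well-definedness check makes explicit why $H\cap\mathbb PV=H_V$, which is where $T_x(X_V)=T_xX\cap\mathbb PV$ is used.

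There is one slip in your dimension remark, and it does not affect the argument. Since $\dim X_V=\dim X-(N-k)$, both conormal spaces have dimension $N-\dim X-1$. Transversality gives $\hat T+\hat V=\Bbbk^{N+1}$, so $(T_xX)^{\perp}\cap\mathbb PV^{\perp}=\emptyset$, and the fiberwise map is an isomorphism rather than having an $(N-k)$-dimensional kernel.
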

\begin{proof}
A general point $\alpha_V\in X_V^\perp$ is represented by a hyperplane
$H_V\subset \mathbb PV$ tangent to $X_V$ at some point
$x\in X_V^{sm}\cap X^{sm}$ by Lemma \ref{lemma_generalityV} (2). In particular, the hyperplane $H$ of $\mathbb{P}^N$ containing 
\begin{equation}\label{eqn_section}
   \textup{Span}(H_V,T_xX)
\end{equation} is tangent to $X$ at $x$ and satisfies $H\cap \mathbb PV=H_V$. Hence, $H^{\perp}\in (X|_V)^{\perp}$ is in the preimage of $\alpha_V$, and the claim follows.
\end{proof}


By blowing up the base locus $X^\perp\cap \mathbb PV^\perp$, the rational map $\pi:X^{\perp}\dashrightarrow\mathbb PV^{\vee}$
extends to a morphism
\[
\tilde\pi:\widetilde{X^\perp}\to (\mathbb PV)^\vee.
\]
Recall that we defined the smooth critical locus $\mathcal{C}(\tilde{\pi})$ for $\tilde{\pi}$ (see Definition \ref{def_discriminant}).

\begin{definition}\normalfont\label{def_smoothcrit_rat}
We define the \textit{smooth critical locus of the rational map $\pi$}, denoted
$\mathcal C(\pi)\subset X^\perp$, to be the closure in $X^\perp$ of the set of
points
$\alpha\in (X^\perp)^{sm}\setminus \mathbb PV^\perp$
such that
\[
d\pi_\alpha:T_\alpha(X^\perp)\to T_{\pi(\alpha)}(\mathbb PV)^\vee
\]
is not surjective.
Equivalently, $\mathcal C(\pi)$ is the image in $X^\perp$ of $\mathcal{C}(\tilde{\pi})$ and they agree away from the exceptional divisor.
\end{definition}

Our key observation is the following.
\begin{proposition}\label{prop_critical-locus}
The smooth critical locus $\mathcal C(\pi)\subset X^\perp$ of the rational map
\[
\pi:X^\perp\dashrightarrow (\mathbb PV)^\vee
\]
coincides with $(X|_V)^\perp$.
\end{proposition}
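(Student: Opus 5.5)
The plan is to compute the differential of the linear projection $p$ at a smooth point of $X^\perp$. Then classical projective duality \eqref{eqn_classical-proj-dual} turns the non-surjectivity condition into an incidence condition with $\mathbb PV$.

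First, fix $\alpha\in (X^\perp)^{sm}\setminus\mathbb PV^\perp$. The map $p$ is a linear projection with center $\mathbb PV^\perp$. Its differential at $\alpha$ is the restriction to $T_\alpha(X^\perp)$ of the linear projection from $\mathbb PV^\perp$, where $T_\alpha(X^\perp)$ is viewed as a projective subspace of $(\mathbb P^N)^\vee$. Hence $d\pi_\alpha$ fails to be surjective exactly when the projective span $\textup{Span}(T_\alpha(X^\perp),\mathbb PV^\perp)$ is a proper subspace of $(\mathbb P^N)^\vee$. That happens exactly when some hyperplane $x^\perp\subseteq(\mathbb P^N)^\vee$ contains both $T_\alpha(X^\perp)$ and $\mathbb PV^\perp$. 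A hyperplane $x^\perp$ contains $\mathbb PV^\perp$ iff $x\in\mathbb PV$, which is linear duality as in Lemma \ref{lmm_linear_PD}. So the criterion is: $\alpha$ is critical iff there is $x\in\mathbb PV$ with $T_\alpha(X^\perp)\subseteq x^\perp$. Since $\alpha\in T_\alpha(X^\perp)$, such an $x$ also lies in $\alpha^\perp$.

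Second, identify these points $x$ via biduality. By Theorem \ref{thm_classicalPD}, the set $\{x : T_\alpha(X^\perp)\subseteq x^\perp\}$ is the conormal space of $X^\perp$ at $\alpha$. This is the fiber of $N^\perp_{(\mathbb P^N)^\vee}X^\perp\cong N^\perp_{\mathbb P^N}X$ over $\alpha$, namely $\pi_1(\pi_2^{-1}(\alpha))\subseteq X$. Therefore $\alpha$ is critical iff $\pi_2^{-1}(\alpha)\cap\pi_1^{-1}(X_V)\neq\emptyset$. In other words, the critical points in $(X^\perp)^{sm}\setminus\mathbb PV^\perp$ are exactly $\pi_2(\pi_1^{-1}(X_V))\cap (X^\perp)^{sm}\setminus\mathbb PV^\perp$. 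Note that this step is a pointwise statement at every smooth $\alpha$. It uses the identification of conormal varieties, not merely the dense open $(N^\perp X)^\circ$, so I will phrase it through $N^\perp_{(\mathbb P^N)^\vee}X^\perp$. At smooth $\alpha$ that fiber is exactly the linear space $T_\alpha(X^\perp)^\perp$.

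Third, take closures. The inclusion $\mathcal C(\pi)\subseteq (X|_V)^\perp$ is immediate, because $(X|_V)^\perp=\pi_2(\pi_1^{-1}(X_V))$ is closed, being the image of a proper map. For the reverse inclusion, I need $(X|_V)^\perp\cap(X^\perp)^{sm}\setminus\mathbb PV^\perp$ to be dense in $(X|_V)^\perp$. Lemma \ref{lemma_generalityV}(i) gives that a general point of $(X|_V)^\perp$ lies in $(X^\perp)^{sm}$. It remains to check that no component of $(X|_V)^\perp$ lies in the base locus $\mathbb PV^\perp$. For this I use Lemma \ref{lemma_generalityV}(1): a general point of each component of $\pi_1^{-1}(X_V)$ is a pair $(x,\alpha)$ with $x\in X^{sm}\cap\mathbb PV$ and $\alpha\supseteq T_xX$. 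The conormal fiber $(T_xX)^\perp$ meets $\mathbb PV^\perp$ in a proper linear subspace, because $\textup{Span}(T_xX,\mathbb PV)$ has dimension greater than $\dim T_xX$ for general $V$. Hence $\alpha\notin\mathbb PV^\perp$ generically on each fiber, and so on each component.

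The main obstacle I expect is the second step, namely making the criterion hold at every smooth point of $X^\perp$ rather than just generically. I will handle it by the conormal-variety formulation of biduality. If that is delicate for singular $X$, it suffices to prove the equality on dense opens, because both sides are closures.
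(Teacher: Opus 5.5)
Your proposal is correct and follows essentially the same route as the paper. It uses the span criterion for non-surjectivity of $d\pi_\alpha$ (Lemma \ref{lemma_C(pi)general}), then biduality of conormal varieties to turn a hyperplane $x^\perp\supseteq\operatorname{Span}(T_\alpha(X^\perp),\mathbb PV^\perp)$ into a point $x\in X_V$ with $(x,\alpha)\in N^{\perp}_{\mathbb P^N}X$, and then passes to closures. It is slightly more careful than the paper in two places: you prove the equivalence at every $\alpha\in(X^\perp)^{sm}\setminus\mathbb PV^\perp$ via the global identification $N^{\perp}_{(\mathbb P^N)^{\vee}}X^{\perp}\cong N^{\perp}_{\mathbb P^N}X$, and you check explicitly that no component of $(X|_V)^\perp$ lies in the base locus $\mathbb PV^\perp$, which the paper passes over; your genericity claim $\mathbb PV\not\subseteq T_xX$ follows, for instance, from transversality, since $\dim(T_xX\cap\mathbb PV)=\dim X_V<\dim\mathbb PV$.
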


We first need to show the following lemma.

\begin{lemma}\label{lemma_C(pi)general}
Let $\alpha\in (X^\perp)^{sm}\setminus \mathbb PV^\perp$, and let
\[
L_\alpha:=p^{-1}(\pi(\alpha))
=\operatorname{Span}(\alpha,\mathbb PV^\perp).
\]
Then the following are equivalent:
\begin{itemize}
\item[(1)] $\alpha\in \mathcal C(\pi)$;
\item[(2)] the differential $d\pi_\alpha:T_\alpha(X^\perp)\to T_{\pi(\alpha)}(\mathbb PV)^\vee$
is not surjective;
\item[(3)] 
$\operatorname{Span}(T_\alpha(X^\perp),L_\alpha)
\subsetneq (\mathbb P^N)^\vee.$

\end{itemize}

\end{lemma}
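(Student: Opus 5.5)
The equivalence (1)$\Leftrightarrow$(2) follows directly from Definition \ref{def_smoothcrit_rat}. For $\alpha\in(X^\perp)^{sm}\setminus\mathbb PV^\perp$, membership in the critical set is defined by non-surjectivity of $d\pi_\alpha$. So the only point to check is the closure. The locus where $d\pi_\alpha$ fails to be surjective is closed in the open set $(X^\perp)^{sm}\setminus\mathbb PV^\perp$, because it is a rank condition on the differential, which varies algebraically. Taking the closure in $X^\perp$ therefore adds no new points of this open set. Hence a point $\alpha$ of the open set lies in $\mathcal C(\pi)$ if and only if $d\pi_\alpha$ is not surjective.

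The real content is (2)$\Leftrightarrow$(3), which is linear algebra for a linear projection. I will work with affine cones. Write $W=(\Bbbk^{N+1})^\vee$ and $K=V^\perp\subseteq W$. By Lemma \ref{lemma_V*and-quotient}, the projection $p$ is induced by the quotient $q\colon W\to W/K\cong V^\vee$. Let $\hat\alpha\in W$ lie over $\alpha$, and let $\widehat T\subseteq W$ be the affine cone over the embedded tangent space $T_\alpha(X^\perp)$, so $\widehat T$ contains the line $\Bbbk\hat\alpha$. The differential of $p$ at $\alpha$ is then the map
\[
\widehat T/\Bbbk\hat\alpha \longrightarrow (W/K)/\Bbbk\, q(\hat\alpha)
\]
induced by $q$. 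This uses the standard identification $T_{[v]}\mathbb P(U)\cong \operatorname{Hom}(\Bbbk v, U/\Bbbk v)$ together with the fact that $q(\hat\alpha)\neq 0$, which holds because $\alpha\notin\mathbb PV^\perp$. The map $d\pi_\alpha$ is the restriction of this map to $T_\alpha(X^\perp)$.

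The map above is surjective if and only if $q(\widehat T)+\Bbbk q(\hat\alpha)=W/K$. Since $\hat\alpha\in\widehat T$, this is the same as $q(\widehat T)=W/K$, which is equivalent to $\widehat T+K=W$. The affine cone over $L_\alpha=\operatorname{Span}(\alpha,\mathbb PV^\perp)$ is $\Bbbk\hat\alpha+K$, and $\Bbbk\hat\alpha\subseteq\widehat T$. So $\widehat T+K$ is exactly the cone over $\operatorname{Span}(T_\alpha(X^\perp),L_\alpha)$. Projectivizing, $d\pi_\alpha$ is surjective if and only if this span is all of $(\mathbb P^N)^\vee$, which gives (2)$\Leftrightarrow$(3).

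I expect the main obstacle to be the identification of $d\pi_\alpha$ with the map induced by $q$ on cones, and in particular checking that the tangent space of $X^\perp$ in the intrinsic sense matches the embedded projective tangent space used in (3). I would handle this by working in an affine chart of $(\mathbb P^N)^\vee$ containing $\alpha$ and avoiding a hyperplane through $\mathbb PV^\perp$. In such a chart $p$ is a linear projection of affine spaces, so the computation is elementary. The remaining steps are routine.
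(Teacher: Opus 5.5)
Your proof is correct and follows essentially the paper's argument. The paper observes that $dp_\alpha$ is surjective with kernel $T_\alpha(L_\alpha)$, so $d\pi_\alpha$ is surjective iff $T_\alpha(X^\perp)+T_\alpha(L_\alpha)=T_\alpha((\mathbb P^N)^\vee)$; your cone computation $\widehat T+K=W$ is the same linear algebra, and your justification of (1)$\Leftrightarrow$(2) via closedness of the rank-drop locus on $(X^\perp)^{sm}\setminus\mathbb PV^\perp$ correctly fills in a step the paper leaves implicit.
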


\begin{proof}
On the open subset $(\mathbb P^N)^\vee\setminus \mathbb PV^\perp$, the linear
projection
\[
p:(\mathbb P^N)^\vee\dashrightarrow (\mathbb PV)^\vee
\]
is a morphism, and $\pi$ is its restriction to $X^\perp$.
The fiber of $p$ through $\alpha$ is the linear subspace
$L_\alpha=\operatorname{Span}(\alpha,\mathbb PV^\perp)$,
hence $\ker(dp_\alpha)=T_\alpha(L_\alpha)$.
Since $p$ is a linear projection, $dp_\alpha$ is surjective. Therefore the
restricted map
\[
d\pi_\alpha=dp_\alpha|_{T_\alpha(X^\perp)}
\]
fails to be surjective if and only if $T_\alpha(X^\perp)+\ker(dp_\alpha)
\neq T_\alpha((\mathbb P^N)^\vee)$,
that is,
\[
T_\alpha(X^\perp)+T_\alpha(L_\alpha)
\subsetneq T_\alpha((\mathbb P^N)^\vee).
\]
This is equivalent to the projective reformulation in (3).
\end{proof}

\noindent\textit{Proof of Proposition \ref{prop_critical-locus}.}
We will show for a general choice of sublinear system $V\subseteq H^0(\mathcal O_X(1))$, the closed set $(X|_V)^{\perp}$ is the closure of the locus where the tangent map $T\pi$ fails to be surjective.

We first show $\left(X\vert_V\right)^{\perp}\subseteq\mathcal{C} (\pi)$. By Lemma \ref{lemma_generalityV}, for general $\alpha\in\left(X\vert_V\right)^{\perp}\setminus \mathbb PV^\perp$, there exists $x\in X^{sm}\cap \mathbb PV$ with $T_xX\subseteq\alpha^{\perp}$. By classical projective duality \eqref{eqn_classical-proj-dual}, this is equivalent to $T_{\alpha}\left(X^{\perp}\right)\subseteq x^{\perp}$. Since $x\in \mathbb PV$, we have $\mathbb PV^\perp\subseteq x^\perp$, and since
$\alpha\in x^\perp$, it follows that
\[
L_\alpha=\operatorname{Span}(\alpha,\mathbb PV^\perp)\subseteq x^\perp.
\]
Hence
\[
T_\alpha(X^\perp)+T_\alpha(L_\alpha)\subseteq x^\perp,
\]
which is a proper hyperplane in $T_\alpha((\mathbb P^N)^\vee)$.
By Lemma \ref{lemma_C(pi)general}, we conclude that
$\alpha\in \mathcal C(\pi)$.


Conversely, we choose a general point $\alpha\in\mathcal{C}(\pi)\setminus \mathbb PV^\perp$, so we can assume $\alpha\in (X^{\perp})^{sm}$. By Lemma \ref{lemma_C(pi)general}, $T_\alpha(X^\perp)+T_\alpha(L_\alpha)$ is a proper subspace of $T_\alpha((\mathbb P^N)^\vee)$.
Therefore there exists a hyperplane
\[
W\subseteq (\mathbb P^N)^\vee
\]
containing both $T_\alpha(X^\perp)$ and $L_\alpha$.

Hence by projective duality (cf. Theorem \ref{thm_classicalPD}), $(\alpha,W^{\perp})\in N_{(\mathbb P^N)^{\perp}}^{\perp}(X^{\perp})\cong N_{\mathbb P^N}^{\perp}X$, so $W^{\perp}=x\in X$. On the other hand, since $\mathbb PV^{\perp}\subseteq L_{\alpha}\subseteq W$, by taking orthogonal complement we have $x\in L_{\alpha}^{\perp}\subseteq \mathbb PV$. Therefore $x\in X\cap \mathbb PV=X_V$, hence by definition \ref{def_critlocus}, we conclude that $\alpha\in (X|_V)^{\perp}$.  Therefore we conclude that $\mathcal{C}(\pi)\subseteq\left( X\vert_V\right)^{\perp}$.    \qed\\





\subsection{Proof of Theorem \ref{thm_mainthmequiv}.}

By Proposition \ref{prop_critical-locus}, we have
\[
\mathcal C(\pi)=(X|_V)^\perp.
\]
By Lemma \ref{lemma_maptodual}, the image of
$(X|_V)^\perp$ under $\pi$ is contained in $X_V^\perp$.
On the other hand, Proposition \ref{prop_critical-dom} shows that the induced map
\[
\pi|_{(X|_V)^\perp}:(X|_V)^\perp \dashrightarrow X_V^\perp
\]
is dominant. Therefore the critical values of $\pi$ contain a dense open subset of
$X_V^\perp$ and are contained in $X_V^\perp$. By definition of the smooth
discriminant,
\begin{equation*}
\Delta(\pi)=\overline{\pi(\mathcal C(\pi)\setminus \mathbb PV^\perp)}=X_V^\perp.
\end{equation*}
\qed

\begin{remark}\normalfont
An analogous statement of Proposition \ref{prop_critical-locus} was obtained in \cite[Proposition 3.9]{FT18} in the generic linear projection of the affine-analytic setting. In our projective setting (and as well Theorem \ref{thm_mainthmequiv}), the biduality isomorphism $N_{(\mathbb P^N)^{\perp}}^{\perp}(X^{\perp})\cong N_{\mathbb P^N}^{\perp}X$ is necessary in the proof. 
\end{remark}

\subsection{Relation to other work}
 In the case where the generic projection is non-dominant, Theorem \ref{thm_mainthmequiv} recovers the result in \cite[Theorem 1.21]{Tev05}.
\begin{corollary}[Non-dominant projection] \cite[Theorem 1.21]{Tev05}
     If $\dim(X^{\perp})<\dim(\mathbb PV^{\vee})$, i.e., the generic projection $\pi:X^{\perp}\to \mathbb PV^{\vee}$ is not dominant, then 
$$\pi(X^{\perp})=X_V^{\perp}.$$
     In other words, the image of the projection coincides with the dual variety of $X_V$ inside $\mathbb PV^{\vee}$.
\end{corollary}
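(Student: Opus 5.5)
This corollary should follow from Theorem \ref{thm_mainthmequiv} once we check what the smooth discriminant is in the non-dominant case. By Definition \ref{def_discriminant} and the convention of Definition \ref{Intro_def_disc}, when the map is not dominant, $\Delta(\pi)$ is the image itself. So the plan has two steps: first show that $\Delta(\pi)=\overline{\pi(X^\perp\setminus \mathbb PV^\perp)}$ directly from the definition, then quote Theorem \ref{thm_mainthmequiv}.

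\textbf{Step 1: every smooth point is critical.} Let $\alpha\in (X^\perp)^{sm}\setminus\mathbb PV^\perp$. Since
\[
\dim T_\alpha(X^\perp)=\dim X^\perp<\dim (\mathbb PV)^\vee,
\]
the differential $d\pi_\alpha$ cannot be surjective. Equivalently, in the language of Lemma \ref{lemma_C(pi)general}(3), the span of $T_\alpha(X^\perp)$ and $L_\alpha$ has dimension at most $\dim X^\perp+(N-k)<N$. Hence every such $\alpha$ lies in $\mathcal C(\pi)$. The set $(X^\perp)^{sm}\setminus \mathbb PV^\perp$ is dense in the irreducible variety $X^\perp$, because $\mathbb PV^\perp$ is a general linear space of dimension $N-k-1$ and does not contain $X^\perp$. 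Therefore $\mathcal C(\pi)=X^\perp$.

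\textbf{Step 2: identify the image.} By Step 1, $\Delta(\pi)=\overline{\pi(X^\perp\setminus\mathbb PV^\perp)}$, which is the image of the blow-up $\tilde\pi(\widetilde{X^\perp})$. This set is closed because $\widetilde{X^\perp}$ is proper. Interpreting $\pi(X^{\perp})$ as this image, Theorem \ref{thm_mainthmequiv} gives $\pi(X^\perp)=\Delta(\pi)=X_V^\perp$.

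The only point needing care is whether the statement means the image of the rational map or its closure. For general $\mathbb PV$ we have $\dim X^\perp<k\le N-\dim\mathbb PV^\perp-1$, so a general $\mathbb PV^\perp$ misses $X^\perp$ entirely. In that case $\pi$ is an honest morphism on the projective variety $X^\perp$, and its image is automatically closed, so the two readings agree. Beyond this, nothing new is needed: the content is already in Proposition \ref{prop_critical-locus} and Proposition \ref{prop_critical-dom}.
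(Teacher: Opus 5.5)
Your proof is correct and follows the paper's route. Like the paper, you show that the smooth critical locus is all of $X^\perp$, so the smooth discriminant is the image of $\pi$, and then invoke Theorem \ref{thm_mainthmequiv}. The differences are only in presentation: you justify $\mathcal C(\pi)=X^\perp$ by a direct dimension count, whereas the paper phrases it as $(X|_V)^\perp=X^\perp$ via Proposition \ref{prop_critical-locus}. You also check explicitly that a general center $\mathbb PV^\perp$ misses $X^\perp$, so $\pi$ is a morphism and its image is already closed.
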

\begin{proof}
    Since $\pi$ is non-dominant, the smooth critical locus $(X|_V)^{\perp}=X^{\perp}$ is the entire dual variety. In other words, every hyperplane $H$ tangent to $X$ at a smooth point must be also tangent to $X$ at a point contained in the linear section $X_V=X\cap \mathbb PV$. Hence by Definition \ref{def_discriminant}, $\pi((X|_V)^{\perp})=\pi(X^{\perp})$ is the smooth discriminant. In particular, Theorem \ref{thm_mainthmequiv} implies the equality $\pi(X^{\perp})=X_V^{\perp}$.
\end{proof}
When $X$ is a hypersurface, Theorem \ref{thm_mainthmequiv} recovers a result of Piene
\cite[Corollary 4.3]{Pie78}.

\section{Purity of smooth discriminant}\label{sec_purity}
In this section, we will prove Theorem \ref{Thm2proof}, which implies Theorem \ref{thm_Dirreducible-hypersurface}. Again, we assume $X\subseteq\mathbb P^N$ is an irreducible projective variety over an algebraically closed field with char $0$. We work with the notion of smooth discriminant (cf. Definition \ref{def_discriminant}) for generic projection of $X$. When the projection is finite and dominant, we prove Theorem \ref{thm_finite-normal}, where the discriminant is clean.

\subsection{Factorization of projections}
From now on, we switch back to the convention of Introduction, i.e., we take generic projection of $X$ and take linear section of $X^{\perp}$.

First, we show a direct consequence of Theorem \ref{thm_mainthmequiv}, which characterize the discriminant locus $D$ inductively as a discriminant locus of projection to a general hyperplane. This phenomenon was also observed in \cite[Proposition 2.2.5]{LeTeissier88}. One refers to \ref{sec_Veronese-surface} for an explicit example for this point of view.
\begin{lemma}\label{lemma_factor}
  Let $X\subseteq \mathbb P^N$ be an irreducible projective variety. Given a generic projection $p:\mathbb P^N\dashrightarrow \mathbb P^k$, we factor it as a composite of projections to the hyperplanes, as shown in the first row of the following commutative diagram:
\begin{figure}[ht]
    \centering
\begin{equation}
\begin{tikzcd}
\mathbb P^N\arrow[r,dashrightarrow]&\mathbb P^{N-1}\arrow[r,dashrightarrow]&\cdots \arrow[r,dashrightarrow]&\mathbb P^{k+1}\arrow[r,dashrightarrow]&\mathbb P^{k}\\
X\arrow[u,symbol=\subseteq]\arrow[ur,"\pi_N"']& \Delta_{N-1}\arrow[u,symbol=\subseteq]\arrow[ur,"\pi_{N-1}"']&\cdots& \Delta_{k+1}\arrow[u,symbol=\subseteq]\arrow[ur,"\pi_{k+1}"']&\Delta_k\arrow[u,symbol=\subseteq].
\end{tikzcd}
\end{equation}
\end{figure}
    
    Let $\Delta_N=X$, we inductively define $\Delta_l\subseteq\mathbb P^l$ to be the smooth discriminant locus of the generic projection of $\pi_{l+1}:\Delta_{l+1}\to\mathbb P^l$ (which is a finite branched cover). Then the smooth discriminant locus $\Delta$ of $\pi|_X:X\dashrightarrow \mathbb P^k$ coincides with $\Delta_k$ as subvarieties of $\mathbb P^k$.
\end{lemma}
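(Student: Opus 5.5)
The plan is to prove, by descending induction on $l$, that $\Delta_l$ equals the smooth discriminant $D_l$ of the composite projection $X\dashrightarrow\mathbb P^l$. Duality turns each step into a statement about nested linear sections of $X^{\perp}$. By Theorem \ref{thm_mainthmequiv}, applied with the roles of $X$ and $X^\perp$ swapped via Theorem \ref{thm_classicalPD}, the smooth discriminant of a generic projection $X\dashrightarrow\mathbb P^l=\mathbb PV_l^\vee$ is
$$D_l=(X^\perp\cap\mathbb PV_l)^\perp .$$
Here $\mathbb PV_l\subseteq(\mathbb P^N)^\vee$ is the $l$-dimensional subspace dual to the projection. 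Factoring $p$ as a chain of projections from points corresponds to a general flag
$$\mathbb PV_k\subset\mathbb PV_{k+1}\subset\cdots\subset\mathbb PV_N=(\mathbb P^N)^\vee,$$
where each $\mathbb PV_l$ is a general hyperplane of $\mathbb PV_{l+1}$. The projection $\mathbb P^{l+1}\dashrightarrow\mathbb P^l$ is dual to the inclusion $\mathbb PV_l\subset\mathbb PV_{l+1}$. A general flag gives a general member at each stage, so each intermediate projection is generic.

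The base case is $l=N$: there $D_N=X=\Delta_N$ by biduality, with the paper's convention that the discriminant of a non-dominant (identity-like) map is its image. For the inductive step, assume $\Delta_{l+1}=D_{l+1}=(Y_{l+1})^\perp$, where $Y_{l+1}:=X^\perp\cap\mathbb PV_{l+1}\subseteq\mathbb PV_{l+1}$ is irreducible, or at least has irreducible components, when of positive dimension. Apply Theorem \ref{thm_mainthmequiv} in the ambient space $\mathbb PV_{l+1}$ to the variety $Y_{l+1}$, with linear subspace $\mathbb PV_l$. The smooth discriminant of the generic projection
$$\pi_{l+1}\colon Y_{l+1}^\perp\dashrightarrow\mathbb PV_l^\vee=\mathbb P^l$$
is then $(Y_{l+1}\cap\mathbb PV_l)^\perp=(X^\perp\cap\mathbb PV_l)^\perp=D_l$. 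Since $Y_{l+1}^\perp=\Delta_{l+1}$ by the inductive hypothesis and biduality (Theorem \ref{thm_classicalPD} applied inside $\mathbb PV_{l+1}$), this gives $\Delta_l=D_l$. At $l=k$ we get $\Delta_k=D_k=\Delta$.

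The main obstacle is genericity and irreducibility. Theorem \ref{thm_mainthmequiv} is stated for an irreducible variety and a general linear subspace, but here $\mathbb PV_l$ is only general inside the fixed $\mathbb PV_{l+1}$, and the relevant variety $Y_{l+1}$ depends on that choice.

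For genericity, I will use the fact that a general flag is built by choosing each step generally relative to the previous ones. The finitely many open conditions in Lemma \ref{lemma_generalityV}, applied to $Y_{l+1}$ in $\mathbb PV_{l+1}$, are then satisfied at every stage.

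For irreducibility, a general linear section of the irreducible variety $X^\perp$ of positive dimension is irreducible by Bertini. In the degenerate cases, where $Y_{l+1}$ is finite or empty, I will argue componentwise. Both sides of the duality statement are unions over points of hyperplanes, or are empty, so the statement still holds. Theorem \ref{thm_mainthmequiv} extends to finite unions by applying it to each component.
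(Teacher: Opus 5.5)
Your proposal is correct and is essentially the paper's argument. The factorization corresponds to a flag of general linear subspaces $\mathbb PV_k\subset\cdots\subset\mathbb PV_N=(\mathbb P^N)^\vee$, and applying Theorem \ref{thm_mainthmequiv} inductively inside each $\mathbb PV_{l+1}$ gives $\Delta_l=(X^\perp\cap\mathbb PV_l)^\perp$, which at $l=k$ is $\Delta$. Your extra care about the relative genericity of the flag, Bertini irreducibility of the intermediate sections, and the finite or empty cases fills in details that the paper's short proof leaves implicit.
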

\begin{proof}
We note that the sequence of generic projections 
$$\mathbb P^N\dashrightarrow\mathbb P^{N-1}\dashrightarrow\cdots\dashrightarrow\mathbb P^{k+1}\dashrightarrow\mathbb P^k$$
corresponds to hyperplanes $H_1,\ldots,H_{N-k}$ in $\mathbb P^N$, such that for each $1\le l\le N-k$, there is a canonical identification
$$(\mathbb P^{N-l})^{\vee}=H_1\cap\cdots \cap H_l.$$
Then apply Theorem \ref{thm_mainthmequiv} inductively to each $1\le l\le N-k$. Under the natural identifications, we obtain 
$$\Delta_{N-l}^{\perp}\cong X^{\perp}\cap (H_1\cap  \cdots \cap H_l).$$
In particular, when $i=N-k$, we prove the claim.
\end{proof}

\begin{corollary} \label{cor_factor2}
Let $X\subseteq \mathbb P^N$ be a $n$-dimensional irreducible projective variety. Then any generic projection can be factored as 
$$X\looparrowright \mathbb P^{n+1}\dashrightarrow \mathbb P^k,$$
where the first map $f$ is a generic immersion  onto a hypersurface $X'$ of $\mathbb P^{n+1}$, and a generic projection $g$ of $X'$, whose smooth discriminant is precisely $\Delta$. 
\end{corollary}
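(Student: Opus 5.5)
The plan is to use the factorization of Lemma \ref{lemma_factor}, but to split the chain of projections at $\mathbb P^{n+1}$. First assume $k\le n$, which is the dominant case of interest. If $N=n+1$ there is nothing to do: take $f$ to be the identity and $X'=X$. Otherwise, write the generic projection $p:\mathbb P^N\dashrightarrow\mathbb P^k$ as a composite $g\circ q$. Here $q:\mathbb P^N\dashrightarrow\mathbb P^{n+1}$ is a generic projection, and $g:\mathbb P^{n+1}\dashrightarrow\mathbb P^k$ is a generic projection of the target. This is possible because a general center $\mathbb PW\subseteq\mathbb P^N$ of dimension $N-k-1$ contains a general linear subspace of dimension $N-n-2$. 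Dually, $\mathbb PV$ is contained in a general $(n+1)$-dimensional subspace $\mathbb PV'\subseteq(\mathbb P^N)^{\vee}$, and $\mathbb PV$ is then a general $k$-dimensional subspace of $\mathbb PV'$.

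Put $f:=q|_X$ and $X':=f(X)$. Since the center of $q$ has dimension $N-n-2$, it misses $X$, so $f$ is a morphism. It is classical that a generic projection of an $n$-dimensional variety to $\mathbb P^{n+1}$ is birational onto a hypersurface $X'$ of degree $\deg X$, and is an isomorphism over a dense open subset of $X'$ meeting $X^{sm}$. This is the sense of "generic immersion": the secant variety has dimension at most $2n+1$, so a general point of the center lies on no secant of a general point of $X$.

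The core step compares dual varieties. By Lemma \ref{lmm_linear_PD} and the same linear algebra as in Lemma \ref{lemma_maptodual}, a hyperplane of $\mathbb P^{n+1}$ tangent to $X'$ at a smooth image point $f(x)$ pulls back to a hyperplane of $\mathbb P^N$ that contains the cone over $T_{f(x)}X'$ with vertex the center, hence contains $T_xX$. Thus
$$X'^{\perp}=X^{\perp}\cap\mathbb PV'$$
as subvarieties of $\mathbb PV'\cong(\mathbb P^{n+1})^{\vee}$, at least on the dense locus where $f$ is a local isomorphism. Both sides are irreducible closures of these loci. For the right-hand side, irreducibility holds when it has positive dimension; otherwise compare components via Lemma \ref{lemma_generalityV}. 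This identity is exactly Theorem \ref{thm_mainthmequiv} in the non-dominant case (the Corollary recovering \cite[Theorem 1.21]{Tev05}), with the roles of $X$ and $X^{\perp}$ switched by Theorem \ref{thm_classicalPD}. So I would cite it directly.

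Now apply Theorem \ref{thm_mainthmequiv} twice. For $p|_X$ it gives $\Delta=(X^{\perp}\cap\mathbb PV)^{\perp}$. For $g|_{X'}$ it gives
$$\Delta(g|_{X'})=(X'^{\perp}\cap\mathbb PV)^{\perp}=(X^{\perp}\cap\mathbb PV'\cap\mathbb PV)^{\perp}=(X^{\perp}\cap\mathbb PV)^{\perp}=\Delta,$$
where the duals are taken in $\mathbb PV^{\vee}\cong\mathbb P^k$.

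The main obstacle is genericity bookkeeping. We need $\mathbb PV$ to be general both for $X$ and, after choosing $\mathbb PV'$, for $X'$. This requires that a general flag $\mathbb PV\subseteq\mathbb PV'$ is general for each step. Since the generality conditions in Lemma \ref{lemma_generalityV} are open and dense on the appropriate flag variety, and the flag variety dominates both Grassmannians, this should go through.

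A second subtlety is that $X'$ is singular, for instance along its double locus. The smooth discriminant only sees $X'^{sm}$, and $X'^{\perp}$ is defined via smooth points, so no extra components appear. In the case $k>n$ the projection $p|_X$ is non-dominant, and the same argument applies verbatim with $\Delta=\pi(X)$.
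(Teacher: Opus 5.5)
Your argument is correct, but it follows a different route from the paper's. The paper takes the factorization from Lemma \ref{lemma_factor} and then argues locally. On the dense open set where $df_x$ is an isomorphism onto $T_{f(x)}X'$ and $f(x)\in (X')^{sm}$, the chain rule $d\pi_x=dg_{f(x)}\circ df_x$ shows that $x$ is a smooth critical point of $\pi$ if and only if $f(x)$ is a smooth critical point of $g$. Hence $\mathcal C(\pi)$ and $\mathcal C(g)$ correspond birationally and have the same image.

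You argue globally through duality instead. The non-dominant case of Theorem \ref{thm_mainthmequiv}, together with biduality, gives $(X')^{\perp}=X^{\perp}\cap\mathbb PV'$. Two further applications of Theorem \ref{thm_mainthmequiv} then give $\Delta(g|_{X'})=(X^{\perp}\cap\mathbb PV)^{\perp}=\Delta$. This is essentially the two-step case of the paper's own proof of Lemma \ref{lemma_factor}.

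Your route never needs to check that $\mathcal C(\pi)$ avoids the double locus and the non-immersive locus of $f$. The paper's one-line argument leaves that point implicit, whereas the duality theorem applied to $X'$ handles $(X')^{sm}$ intrinsically. The price is that you must make $\mathbb PV$ general for $X'$, which itself depends on $\mathbb PV'$; your flag-variety argument is the right way to handle this. On the other hand, the paper's version gives more than equality of discriminants: it identifies $\mathcal C(\pi)$ birationally with $\mathcal C(g)$. That identification is what Proposition \ref{prop_one-to-one} later uses to reduce to the hypersurface case, and your proof alone would not supply it.

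One small correction: your closing remark about $k>n$ only makes sense for $k=n+1$, where $g$ is the identity. For $k>n+1$ there is no factorization through $\mathbb P^{n+1}$.
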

\begin{proof} From Lemma \ref{lemma_factor}, $f$ is obtained by composing the first $N-n-1$ maps. Since $f$ is a generic immersion, $df$ is injective on a dense open subset of $X$. Hence the smooth critical locus of $\pi$ agrees birationally with the smooth critical locus of $g:X'\dashrightarrow \mathbb P^k$ on a dense open subset.   Therefore, the claim follows.
\end{proof}


\subsection{Proof of purity theorem}

Now, Theorem \ref{thm_Dirreducible-hypersurface} will follow from the following and Lemma \ref{lemmma_disc_vs_smoothdisc}:

\begin{theorem}[cf.  Theorem \ref{thm_Dirreducible-hypersurface}]\label{Thm2proof}
 Let $X\subset \mathbb P^N$ be an irreducible projective variety. Assume that the generic projection 
  $$X\dashrightarrow \mathbb P^k$$
  is dominant (equivalently, $\dim(X)\ge k$). Then the smooth discriminant locus $\Delta$ is either 
    \begin{itemize}
    \item[(i)] an irreducible hypersurface, if $\textup{codim}(X^{\perp})<k$, or
    \item[(ii)]  union of hyperplanes, if $\textup{codim}(X^{\perp})=k$, or
    \item[(iii)]  empty, if $\textup{codim}(X^{\perp})>k$.
    \end{itemize}
\end{theorem}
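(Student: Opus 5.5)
By Theorem \ref{thm_mainthmequiv} (applied with the roles of $X$ and $X^{\perp}$ exchanged, using the biduality of Theorem \ref{thm_classicalPD}), the smooth discriminant $\Delta$ of the generic projection $X\dashrightarrow\mathbb P^k=\mathbb PV^{\vee}$ is the dual variety, inside $\mathbb PV^{\vee}$, of the linear section $Y:=X^{\perp}\cap \mathbb PV$. Here $\mathbb PV\subseteq(\mathbb P^N)^{\vee}$ is a general $k$-dimensional linear subspace. The plan is to analyze $Y\subseteq \mathbb PV\cong\mathbb P^k$ according to $c=\textup{codim}(X^{\perp})$ and then take its dual.

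\textbf{Case (iii), $c>k$.} A general $\mathbb PV$ of dimension $k$ misses $X^{\perp}$, since $\dim X^{\perp}+k<N$. So $Y=\emptyset$, and its dual is empty; hence $\Delta=\emptyset$.

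\textbf{Case (ii), $c=k$.} Then $Y$ is a finite set of reduced points, and there are $\deg X^{\perp}$ of them. The dual of a point $y\in\mathbb PV$ is the hyperplane $y^{\perp}\subseteq\mathbb PV^{\vee}$, so $\Delta$ is a finite union of hyperplanes.

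I should double-check that the dual of a $0$-dimensional variety is computed componentwise under the paper's conventions. The conormal variety of a point is the whole hyperplane of forms vanishing at it, and for a reducible $Y$ the dual is the union of the duals of its components. I will also check that Theorem \ref{thm_mainthmequiv} was stated for irreducible $X$ but only uses the irreducibility of $X^{\perp}$ on the dual side. Every step of its proof (Proposition \ref{prop_critical-locus} and Proposition \ref{prop_critical-dom}) is pointwise, so it applies to each point of $Y$.

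\textbf{Case (i), $c<k$.} Here $\dim Y=k-c\ge 1$. By Bertini's theorem for irreducibility, a general linear section of positive dimension of the irreducible variety $X^{\perp}$ is irreducible, so $Y$ is irreducible and its dual $Y^{\perp}$ is irreducible. The main obstacle is to show that $Y^{\perp}$ is a hypersurface in $\mathbb PV^{\vee}$, that is, that $Y$ is not dual defective.

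I would first try the following argument. Suppose instead that $\dim Y^{\perp}<k-1$. A general hyperplane $H_V$ tangent to $Y$ is then tangent along a positive-dimensional linear subspace: by biduality, the fiber of the conormal variety over $H_V$ is a linear space $L$ of dimension equal to the defect of $Y$. Lift $H_V$ as in Proposition \ref{prop_critical-dom}, using the span of $H_V$ with $T_y X^{\perp}$, to a hyperplane $H$ of $(\mathbb P^N)^{\vee}$ that is tangent to $X^{\perp}$ along $L$. By biduality, the point $H^{\perp}\in X$ would then have its conormal space of $X^{\perp}$-type, the projective tangent space $T_{H^{\perp}}X$, meeting the general $\mathbb PV$ in a positive-dimensional family.

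I expect a cleaner route to be a dimension count via Corollary \ref{cor_factor2} and Lemma \ref{lemma_factor}, reducing to the hypersurface $X'\subseteq\mathbb P^{n+1}$. For a hypersurface, the conormal variety maps generically finitely onto its image. The critical locus $\mathcal C(\pi)=(X^{\perp}|_V)^{\perp}$ of Proposition \ref{prop_critical-locus} has dimension $\dim\pi_1^{-1}(Y)=N-1-(N-k)=k-1$, because it is cut from the $(N-1)$-dimensional conormal variety by $N-k$ general divisors. It therefore remains to show that $\pi|_{\mathcal C(\pi)}$ is generically finite onto its image. I would argue this by showing that the fibers of $(X^{\perp}|_V)^{\perp}\to\Delta$ are linear subspaces of the fibers of the projection $p$ from the center $\mathbb PV^{\perp}$. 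These fibers are cut out by the conditions of Lemma \ref{lemma_C(pi)general}, and a general choice of the center forces them to be $0$-dimensional, again by counting the dimension of the incidence between the center and these positive-dimensional linear spaces. Granting this, $\Delta$ is an irreducible hypersurface.
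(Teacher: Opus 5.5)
Your arguments for (ii), (iii) and for irreducibility in (i) match the paper's: duality plus Bertini, applied to $Y=X^{\perp}\cap\mathbb PV$. Your worry about a reducible $Y$ is harmless, since Theorem \ref{thm_mainthmequiv} is applied to the irreducible variety $X^{\perp}$ and only the linear section is reducible.

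The gap is the codimension-one assertion in (i). Your first attempt stops before reaching any contradiction. In the second, after reducing to the hypersurface $X'\subseteq\mathbb P^{n+1}$, the count $\dim\mathcal C(\pi)=k-1$ is fine. The step you then grant, that $\mathcal C(\pi)\to\Delta$ is generically finite, is equivalent to $\dim\Delta=k-1$. That is, it says the general section $Y$ is not dual defective, which is the entire content of (i). Nothing in your setup makes it automatic:
\begin{itemize}
\item For $k<n$ the projection $X'\dashrightarrow\mathbb P^k$ has positive-dimensional fibres, so finiteness does not come for free.
\item The assertion that the fibres of $\mathcal C(\pi)\to\Delta$ are linear is unsupported. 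Over a general $H_V\in Y^{\perp}$ the fibre is a union, indexed by the contact locus of $H_V$ with $Y$, of slices of contact loci of $X'$.
\item The incidence count is not a ``general center avoids a fixed family'' argument. The bad loci, planes $\operatorname{Span}(\Lambda,x)$ tangent to $X'$ along a positive-dimensional set, are defined in terms of the center $\Lambda$ itself. Bounding the family of such planes is again the non-defectiveness you are trying to prove.
\end{itemize}

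The missing idea is to use Lemma \ref{lemma_factor} all the way down, not only to $X'$. Factor the projection as $X'\to\mathbb P^n\supseteq\Delta_n\to\mathbb P^{n-1}\supseteq\cdots\to\mathbb P^k$, where each step $\Delta_{l+1}\to\mathbb P^l$ is a \emph{finite} projection from a general point and $\Delta_k=\Delta$. The paper normalizes $\Delta_{l+1}$ and invokes Zariski--Nagata purity (Theorem \ref{thm_Zariski-Nagata}) to conclude that each nonempty $\Delta_l$ is a hypersurface.

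Your dimension count would also go through at this level. Finiteness of the critical locus over $\Delta_l$ is now automatic. The critical locus of projecting the hypersurface $\Delta_{l+1}$ from a general point $o$ is the Gauss-map preimage of $\Delta_{l+1}^{\perp}\cap o^{\perp}$. It has dimension $l-1$, because $\Delta_{l+1}^{\perp}$ is a linear section of $X^{\perp}$ of dimension $l+1-\operatorname{codim}(X^{\perp})\ge 2$ in case (i).

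If you want to keep a single projection, you need a genuine local input instead. For instance, at a general $x\in\mathcal C(\pi)$, the second fundamental form of $X'$ has rank $\dim X'^{\perp}\ge n-k+2$ in case (i). It is therefore nondegenerate on the general $(n-k+1)$-plane $\operatorname{Span}(\Lambda,x)\subseteq T_xX'$, so $x$ is isolated in its fibre.
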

\begin{proof}
    If $\textup{codim}(X^{\perp})>k$, $X^{\perp}$ does not intersect with a general linear space of dimension $k$, hence by Theorem \ref{thm_dual}, the discriminant locus is empty; If $\textup{codim}(X^{\perp})=k$, the intersection $X^{\perp}$ with a general $\mathbb P^k$ consists of finitely many points, whose dual variety is a union of hyperplanes; If $\textup{codim}(X^{\perp})<k$, then Bertini theorem implies $\mathbb P^k\cap X^{\perp}$ is irreducible, hence by Theorem \ref{thm_dual}, its dual variety, the smooth discriminant $\Delta$ is irreducible as well.  

    It remains to show $\Delta$ has codimension one under the assumption that $\textup{codim}(X^{\perp})<k$. Since $n:=\dim(X)\ge k$, we can factor the generic projection as a sequence of generic projections to hyperplanes, and we have a sequence of generic projections:
    $$X\xrightarrow{\pi_{n+1}} \mathbb P^{n}\supseteq \Delta_n\xrightarrow{\pi_n} \mathbb P^{n-1}\supseteq \Delta_{n-1}\to \cdots \supseteq \Delta_{k+1}\xrightarrow{\pi_{k+1}} \mathbb P^k,$$
    where each $\Delta_l$ is defined as the smooth discriminant locus of the finite branched cover $\pi_{l+1}:\Delta_{l+1}\to \mathbb P^l$. $\Delta_l$ is nonempty by our assumption and Lemma \ref{lemma_factor}.  Also, notice that by taking normalization $\tilde{\Delta}_{l+1}$ of $\Delta_{l+1}$. The finite branched cover 
    $$\tilde{\pi}_{l+1}:\tilde{\Delta}_{l+1}\to \Delta_{l+1}\to \mathbb P^{l}$$
    has the same smooth discriminant: $\Delta(\tilde{\pi}_{l+1})=\Delta_l$. Now, since $\Delta_l$ is an irreducible component of the branch loci of $\tilde{\pi}_{l+1}$,  Zariski--Nagata purity Theorem \ref{thm_Zariski-Nagata} implies that each $\Delta_l\subseteq \mathbb P^l$ is a hypersurface. Finally, by Lemma \ref{lemma_factor}, $\Delta_k$ coincides with the smooth discriminant $\Delta$, thus the claim follows.
\end{proof}


\begin{remark}[Genericity of the projection center]\normalfont \label{remark_proj-from-p-on-Q}
When generic projection $\pi:X\dashrightarrow \mathbb P^k$ has positive dimensional fiber, the purity relies on the genericity of the projection center. For example, projecting a quadric surface $Q$ from a point $p$ lying on it yields a birational map $Q\dashrightarrow \mathbb P^2$ contracting the two lines through $p$. Therefore, the discriminant locus consists of two points, which has codimension two.

\end{remark}






\subsection{Purity of smooth critical locus}
In the rest of this section, we prove an analogous purity statement for the smooth critical locus of a dominant generic projection. 

We first strengthen Proposition \ref{prop_critical-dom}:

\begin{proposition}\label{prop_one-to-one}
 With the same assumption as before, the map  $\pi|_{(X|_V)^{\perp}}$ is generically one-to-one. In particular, for generic projection, the map from the smooth critical locus to the smooth discriminant locus
 $$\mathcal {C}(\pi)\to \Delta(\pi)$$
 is birational.
\end{proposition}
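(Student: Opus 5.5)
The plan is to compute the fiber of $\pi|_{(X|_V)^\perp}$ over a general point $\alpha_V\in X_V^\perp$ directly, in the convention of Section~\ref{sec_main-thm}: $X\subseteq\mathbb P^N$, $\mathbb PV$ a general linear subspace, and $\pi:X^\perp\dashrightarrow\mathbb PV^\vee$. By Lemma~\ref{lemma_generalityV}(ii), $\alpha_V$ is a hyperplane $H_V\subset\mathbb PV$ tangent to $X_V$ at a point $x\in X_V^{sm}\cap X^{sm}$. A point $\alpha=H^\perp$ in the fiber, taken general in $(X|_V)^\perp$ by Lemma~\ref{lemma_generalityV}(i), satisfies two conditions: $H\supseteq T_yX$ for some $y\in X_V^{sm}\cap X^{sm}$, and $H\cap\mathbb PV=H_V$. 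The argument then splits into two claims.

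\emph{Step 1: $y$ determines $H$.} Suppose $y$ is as above, with $T_yX$ meeting $\mathbb PV$ transversally, so that $T_yX\cap\mathbb PV=T_y(X_V)$. This transversality holds for general $y\in X_V$ because $\mathbb PV$ is general. Then $T_y(X_V)\subseteq H_V$, and a dimension count gives
\[
\dim \operatorname{Span}(H_V,T_yX)=(k-1)+n-(n+k-N)=N-1 .
\]
Hence the span in \eqref{eqn_section} is itself a hyperplane, and $H=\operatorname{Span}(H_V,T_yX)$ is forced. So points of the fiber correspond injectively to points $y\in X_V$ at which $H_V$ is tangent to $X_V$; in other words, to the contact locus of $H_V$.

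\emph{Step 2: the contact locus of a general $H_V$ is a single point.} Apply classical projective duality (Theorem~\ref{thm_classicalPD}) to $X_V\subseteq\mathbb PV$. The fiber of $N^\perp_{\mathbb PV}(X_V)\to X_V^\perp$ over a general point $\alpha_V$ is the linear space $(T_{\alpha_V}X_V^\perp)^\perp$, so it is a single point exactly when $X_V^\perp$ is a hypersurface in $\mathbb PV^\vee$. Combining the two steps, the general fiber of $\pi|_{(X|_V)^\perp}$ is one point. Since we are in characteristic $0$, generic injectivity together with dominance (Proposition~\ref{prop_critical-dom}) gives birationality onto $X_V^\perp$. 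Proposition~\ref{prop_critical-locus} and Theorem~\ref{thm_mainthmequiv} then identify this map with $\mathcal C(\pi)\to\Delta(\pi)$.

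The main obstacle is Step~2, namely showing that $X_V$ is not dual defective in the relevant range. One way to see what is needed is a dimension check. We have $\dim(X|_V)^\perp=\dim X_V+(N-n-1)=k-1$, and $\dim X_V^\perp\le k-1$, so generic finiteness is equivalent to $X_V^\perp$ being a hypersurface. I would first try the standard fact that a general hyperplane section of a dual defective variety has defect exactly one less, applied repeatedly. The cases in which $X_V^\perp$ is a hypersurface then match the non-empty cases of Theorem~\ref{Thm2proof}, where $\Delta$ is a hypersurface. Alternatively, one can read this off the purity statement already proved: $\Delta(\pi)=X_V^\perp$ has codimension one whenever it is nonempty. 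In the remaining empty case there is nothing to prove.
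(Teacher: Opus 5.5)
Your argument is correct and uses the same mechanism as the paper, namely the lift $H=\operatorname{Span}(H_V,T_xX)$ of \eqref{eqn_section}. Where it differs is in how you justify that this lift is unique.

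The paper first invokes Corollary~\ref{cor_factor2} to reduce to the case where the projected variety $X^\perp$ is a hypersurface. It then uses the uniqueness of the tangent hyperplane at a general point. This is short, but that uniqueness recovers the contact point $x$ from $\alpha$, not from $H_V=\pi(\alpha)$. So it leaves implicit why a general $H_V\in X_V^\perp$ touches $X_V$ at only one point.

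You need no reduction to the hypersurface case. Your dimension count shows that $\operatorname{Span}(H_V,T_yX)$ is automatically a hyperplane at any transversal contact point $y$. Hence the fiber injects into the contact locus of $H_V$ on $X_V$. You then make that locus a single point by combining biduality for $X_V\subseteq\mathbb PV$ with the fact that $X_V^\perp=\Delta(\pi)$ has pure codimension one, which follows from Theorems~\ref{thm_mainthmequiv} and~\ref{Thm2proof}. There is no circularity, since the purity proof does not use this proposition. In effect you trade the factorization corollary for the purity theorem, and you make both ingredients explicit.

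Two small fixes:
\begin{itemize}
\item Use your route (b), not (a). The defect-drops-by-one fact is usually stated for smooth $X$, and as you phrase it, it says nothing once the defect reaches zero.
\item Replace ``taken general in $(X|_V)^\perp$'' with a dimension argument. The closed subset of $(X|_V)^\perp$ where Lemma~\ref{lemma_generalityV}(i) or transversality of $T_yX$ to $\mathbb PV$ fails has dimension $<k-1=\dim X_V^\perp$. So the entire fiber over a general $\alpha_V$ avoids it. Your bound $\dim(X|_V)^\perp\le k-1$ is exactly what makes this work.
\end{itemize}
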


\begin{proof}
 By Corollary \ref{cor_factor2}, this reduces to the case where $X^{\perp}$ is a hypersurface. Then it has a unique tangent hyperplane at a general point. Therefore, \eqref{eqn_section} defines the unique lifting of $\pi|_{(X|_V)^{\perp}}$ at a general point, so it is generically one-to-one.
\end{proof}

\begin{corollary}
    The smooth critical locus of a generic projection (cf. \eqref{intro_eqn_genproj}) either has pure codimension $r+1$, where $r=\dim(X)-k$ is the relative dimension, or is empty.
\end{corollary}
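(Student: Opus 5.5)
The corollary should follow by combining the purity Theorem \ref{Thm2proof} with the birationality of Proposition \ref{prop_one-to-one}, plus a dimension count. Throughout, write $n=\dim(X)$ and $r=n-k\ge 0$. We work with the smooth critical locus $\mathcal C(\pi)\subseteq \tilde X=\Bl_B X$; it agrees with its image in $X$ away from the exceptional divisor, and $\dim \tilde X=n$.

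\textbf{Step 1: reduce to the shape of $\Delta$.} By definition $\Delta(\pi)=\tilde\pi(\mathcal C(\pi))$, so $\mathcal C(\pi)$ is empty exactly when $\Delta(\pi)$ is. By Theorem \ref{Thm2proof} the only alternatives are:
\begin{itemize}
\item[(a)] $\Delta$ is empty, when $\textup{codim}(X^{\perp})>k$;
\item[(b)] $\Delta$ is a hypersurface in $\mathbb P^k$, when $\textup{codim}(X^{\perp})\le k$. It is irreducible in case (i) and a finite union of hyperplanes in case (ii).
\end{itemize}
In case (b) every component of $\Delta$ has dimension $k-1$.

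\textbf{Step 2: transfer the dimension.} Proposition \ref{prop_one-to-one} says $\mathcal C(\pi)\to\Delta(\pi)$ is birational, and in particular generically finite. Hence $\dim\mathcal C(\pi)=\dim\Delta=k-1$. The codimension of $\mathcal C(\pi)$ in $\tilde X$ is then
\[
n-(k-1)=r+1,
\]
which is the claimed value.

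\textbf{Step 3: purity, the main obstacle.} Step 2 only controls the components of $\mathcal C(\pi)$ that dominate a component of $\Delta$. We must rule out extra components of $\mathcal C(\pi)$ of other dimensions, for instance ones collapsing onto a lower-dimensional subset of $\Delta$.

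To handle this, I would use the identification of Proposition \ref{prop_critical-locus}, read in the dual setting: $\mathcal C(\pi)$ equals $(X|_V)^{\perp}=\pi_2(\pi_1^{-1}(X_V))$, with the roles of $X$ and $X^\perp$ swapped. Thus $\mathcal C(\pi)$ is the image of the linear section $\pi_1^{-1}(X_V)$ of the irreducible conormal variety.

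By Lemma \ref{lemma_generalityV}(1), applied inductively with general hyperplanes, every component of $\pi_1^{-1}(X_V)$ meets the dense open set $U$. So each component is a proper linear section of the expected dimension. On $U$ the situation is birationally controlled by the unique-lift argument behind Proposition \ref{prop_one-to-one}, after reducing to the hypersurface case via Corollary \ref{cor_factor2}.

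It follows that every component of $\mathcal C(\pi)$ is the closure of a set on which $\mathcal C(\pi)\to\Delta$ is injective. Therefore every component dominates a component of $\Delta$ and has dimension $k-1$. Finally, the closure in $\tilde X$ of the critical set can add no further components, because $\mathcal C(\pi)$ is by definition the closure of the critical points on the smooth locus. This gives pure codimension $r+1$, or emptiness in case (a).
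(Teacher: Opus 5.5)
Your Steps 1--2 are exactly the paper's proof. The paper combines Theorem \ref{Thm2proof} with Proposition \ref{prop_one-to-one} and lets the word ``birational'' cover all of $\mathcal C(\pi)$. You correctly notice that the unique-lift argument only controls the fiber over a \emph{general} point of $\Delta$. However, Step 3 does not repair this, because it is circular.

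In the notation of Section \ref{sec_main-thm}, the lift $\operatorname{Span}(H_V,T_xX)$ of $H_V\in X_V^{\perp}=\Delta$ is unique only when $H_V$ is tangent to $X_V$ at a single point $x$. That holds for a general $H_V\in\Delta$. Now suppose $Z$ were a stray component with $\pi(Z)$ not a component of $\Delta$. For $\alpha\in Z$, the hyperplane $H_V=\pi(\alpha)$ is not general. It may be tangent to $X_V$ at several points, or along a positive-dimensional set, and each tangency point $x'$ gives a different lift $\operatorname{Span}(H_V,T_{x'}X)$. Knowing $(x,\alpha)\in U$, or that the projected variety is a hypersurface, does not change this. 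So ``$\mathcal C(\pi)\to\Delta$ is injective on a dense subset of every component'' is a restatement of what you want to prove. Even granting it, injectivity on $Z$ only gives $\dim Z=\dim\pi(Z)$, not $\dim Z=k-1$. So ``therefore every component dominates a component of $\Delta$'' does not follow.

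The missing idea is to control the components of $\pi_1^{-1}(X_V)$ directly, which makes injectivity unnecessary.
\begin{itemize}
\item By Lemma \ref{lemma_generalityV}(1), $\pi_1^{-1}(X^{sm}\cap\mathbb PV)$ is dense in $\pi_1^{-1}(X_V)$. It is a projective bundle over $X^{sm}\cap\mathbb PV$ with fibers $(T_xX)^{\perp}\cong\mathbb P^{N-1-\dim X}$.
\item If $\dim X_V>0$, then $X_V$ is irreducible by Bertini, so $\pi_1^{-1}(X_V)$ is irreducible of dimension $k-1$. Hence $\mathcal C(\pi)=\pi_2(\pi_1^{-1}(X_V))$ is irreducible of dimension at most $k-1$. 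Since it surjects onto the hypersurface $\Delta$, its dimension is exactly $k-1$, i.e.\ its codimension is $r+1$.
\item If $X_V$ is finite, it lies in $X^{sm}$ by Lemma \ref{lemma_generalityV}(2). Then $\mathcal C(\pi)$ is the union of the linear spaces $(T_xX)^{\perp}\cong\mathbb P^{k-1}$ for $x\in X_V$, so it is pure of dimension $k-1$.
\end{itemize}
In the convention of the introduction, the same irreducibility follows from Jouanolou's Bertini theorem applied to $\pi_2$, since $\dim X^{\perp}>N-k$ when $\textup{codim}(X^{\perp})<k$.

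Your last sentence also needs an actual reason. Proposition \ref{prop_critical-locus} only sees critical points off the base locus $B$, and being a closure of critical points does not by itself exclude components inside the exceptional divisor $E$. When the projected variety is smooth, $E\cong B\times\mathbb P^k$ and $\tilde\pi$ restricted to $E$ is the projection to $\mathbb P^k$. So $E$ contains no critical points.
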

\begin{proof}
    This follows from Theorem \ref{Thm2proof} and Proposition \ref{prop_one-to-one} that the smooth critical locus is birational to the smooth discriminant locus.
\end{proof}
We will continue the discussion of the smooth critical locus in Section \ref{sec_polar}.

\subsection{Finite branched cover}
Now we specialize Theorem \ref{thm_mainthmequiv} to the case where the generic projection is finite and dominant. Recall that in this case, the smooth discriminant coincides with Fitting discriminant (See Lemma \ref{lemma_finite-clean}).

\begin{theorem}\label{thm_finite-normal}
     Let $X\subseteq \mathbb P^N$ be a normal projective variety of dimension $n$. Let $$\pi:X\to \mathbb P^n$$ be a generic projection. Note that this is regular and is a finite dominant morphism. Then the branch divisor $\Delta=\Delta_{Fitt}$ is the discriminant of $\pi$. Moreover,
     \begin{itemize}
         \item[(1)]  there is an equality of projective subvarieties of $(\mathbb P^n)^{\vee}$:
     $$ \Delta^{\perp}= X^{\perp}\cap (\mathbb P^n)^{\vee}.$$ 
     \item[(2)] $\Delta$ is a hypersurface iff $\textup{codim}(X^{\perp})<n$, a union of hyperplanes iff $\textup{codim}(X^{\perp})=n$, and empty iff $\textup{codim}(X^{\perp})>n$.
     \end{itemize}

\end{theorem}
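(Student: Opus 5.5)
The plan is to reduce everything to results already established: Lemma \ref{lemma_finite-clean} for cleanness, Theorem \ref{thm_mainthmequiv} (via Theorem \ref{thm_dual}) for the duality, and Theorem \ref{Thm2proof} together with Zariski--Nagata (Theorem \ref{thm_Zariski-Nagata}) for purity.

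First, regularity and finiteness. Since $\dim X=n$, a general center $\mathbb P^{N-n-1}$ misses $X$, so the projection $\pi:X\to\mathbb P^n$ is a morphism. It is finite because it is the restriction of a linear projection whose center avoids $X$, so no curve of $X$ is contracted. It is surjective because the dimensions agree. With $B=\emptyset$, the blowup in Definition \ref{def_discriminant} is $X$ itself. Lemma \ref{lemma_finite-clean} then gives $\Delta_{Fitt}(\pi)=\Delta(\pi)$, and by Theorem \ref{thm_Zariski-Nagata} this set equals the branch locus $\mathbb P^n\setminus U$. So the branch divisor, the Fitting discriminant and the smooth discriminant all agree set-theoretically; this is the first assertion.

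For (1), apply Theorem \ref{thm_mainthmequiv} with the roles of $X$ and $X^\perp$ exchanged, exactly as in the deduction of Theorem \ref{thm_dual}. Classical duality, Theorem \ref{thm_classicalPD}, lets us write $X=(X^\perp)^\perp$. The generic projection of $X$ to $\mathbb P^n=\mathbb PV^\vee$ then corresponds to a general linear subspace $\mathbb PV=(\mathbb P^n)^\vee\subseteq(\mathbb P^N)^\vee$. The theorem gives $\Delta(\pi)=(X^\perp\cap\mathbb PV)^\perp$. Dualizing again inside $\mathbb PV$ gives $\Delta^\perp=X^\perp\cap(\mathbb P^n)^\vee$. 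This step needs biduality for the possibly reducible variety $X^\perp\cap\mathbb PV$, which holds componentwise in characteristic $0$. It also needs the "general" condition of Theorem \ref{thm_mainthmequiv} to be compatible with the general projection, which is automatic because both are Zariski-open conditions on the same Grassmannian.

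For (2), the forward implications are Theorem \ref{Thm2proof} with $k=n$, since here $\dim X=n\ge k$ and so the projection is dominant. Now combine this with (1). If $\operatorname{codim}X^\perp>n$, a general $\mathbb P^n$ misses $X^\perp$ and $\Delta$ is empty. If the codimension equals $n$, the intersection is finitely many points, whose duals are hyperplanes. If it is smaller, the intersection is irreducible of positive dimension, so its dual is irreducible and, by purity, a hypersurface. Because the three cases exhaust the possibilities and have mutually exclusive conclusions (irreducible hypersurface, finite union of hyperplanes, empty), each implication reverses, which gives the "iff" statements.

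The one subtlety is in the middle case. A single hyperplane is also an irreducible hypersurface, so the words "hypersurface" and "union of hyperplanes" overlap and need care. In the case $\operatorname{codim}<n$, $X^\perp\cap\mathbb P^n$ is positive-dimensional and non-linear. Its dual is then not a union of hyperplanes, because biduality would force $X^\perp\cap\mathbb P^n$ to be finite. I will state explicitly that "hypersurface" in (2) means one that is not a union of hyperplanes, so that the cases remain disjoint.
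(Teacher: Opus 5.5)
Your proposal is correct and is essentially the paper's proof. The paper simply combines Lemma~\ref{lemma_finite-clean}, Theorem~\ref{thm_mainthmequiv} (with biduality to exchange $X$ and $X^{\perp}$) and Theorem~\ref{Thm2proof} with $k=n$; your explicit treatment of the converse implications and of the overlap between \emph{hypersurface} and \emph{union of hyperplanes} fills in details the paper leaves implicit, and it is sound. In particular, when $\textup{codim}(X^{\perp})<n$, part (1) rules out a union of hyperplanes, since such a $\Delta$ would have finite dual.
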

\begin{proof}
    This follows from Lemma \ref{lemma_finite-clean}, Theorem \ref{thm_mainthmequiv} and \ref{Thm2proof}.
\end{proof}

\section{Dual defective varieties}\label{sec_emptydisc}
In this section, we discuss some consequences of Theorem \ref{Thm2proof}. We specifically focus on case (ii) and (iii) of Theorem \ref{Thm2proof} and prove Corollary \ref{intro_cor_smoothfib}.

\subsection{Empty smooth discriminant}\label{sec_smoothfib}

In this section, we discuss case (iii) of Theorem \ref{Thm2proof} and investigate the relationship with dual defective varieties (see Remark \ref{remark_defect}).

\begin{corollary}\label{cor_smooth-fibration}
Let $X \subseteq \mathbb{P}^N$ be a projective variety such that $\mathbb{P}V \cap X^\perp = \emptyset$, and let $B$ be the base locus of the projection $\pi: X \dashrightarrow \mathbb{P}V^\vee$. Then the induced morphism $$\tilde{\pi}: \textup{Bl}_BX \to \mathbb{P}V^\vee$$ is smooth on $(\textup{Bl}_BX)^{sm}$, the smooth locus of $\textup{Bl}_BX$. 

When $X$ is smooth, then $\tilde{\pi}$ is a smooth proper morphism if and only if $\textup{codim}(X^{\perp })>\dim(\mathbb PV)$.
\end{corollary}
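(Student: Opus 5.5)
The plan is to deduce both parts from Theorem \ref{thm_mainthmequiv} (equivalently Theorem \ref{thm_dual}) together with Definition \ref{def_discriminant}, and in the smooth case from Lemma \ref{lemmma_disc_vs_smoothdisc}.

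\emph{First assertion.} If $\mathbb PV\cap X^{\perp}=\emptyset$, then by Theorem \ref{thm_dual} the smooth discriminant is $\Delta(\tilde\pi)=(\mathbb PV\cap X^\perp)^\perp=\emptyset$. Hence the smooth critical locus $\mathcal C(\tilde\pi)$ is empty. So at every point $y\in(\textup{Bl}_BX)^{sm}$ the differential $d\tilde\pi_y$ is surjective. For a morphism from a smooth variety to the smooth target $\mathbb PV^\vee$, surjectivity of the differential at $y$ is equivalent to smoothness at $y$ (Jacobian criterion; flatness follows from miracle flatness, since the fibers have the expected dimension near $y$). This gives smoothness of $\tilde\pi$ on the smooth locus.

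One subtlety needs checking here: Theorem \ref{thm_dual} was proved for the rational map, with $\mathcal C(\pi)$ defined away from the base locus (Definition \ref{def_smoothcrit_rat}). I must make sure the exceptional divisor contributes no critical points. For this I would argue directly. Take $X$ smooth along $B$, which holds for general $V$ on $X^{sm}$. Since $B=X\cap\mathbb PV^\perp$ is a transversal linear section, $\textup{Bl}_BX$ embeds in $X\times\mathbb PV^\vee$ as the incidence $\{(x,[\ell]):x\in\ell\text{-span}\}$. Near the exceptional divisor, the fiber over $[\ell]\in\mathbb PV^\vee$ is $X\cap \Lambda_\ell$, with $\Lambda_\ell\supseteq \mathbb PV^{\perp}$ the corresponding codimension-$k$ linear space. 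Smoothness of the fiber at a point of $B$ follows because $B$ itself is a smooth transversal section. So critical points on $E$ would again correspond to hyperplanes in the pencil tangent to $X$, hence to points of $\mathbb PV\cap X^\perp$, and there are none. Alternatively, one can simply note that critical values are closed and $\Delta(\tilde\pi)$ is defined as a closure, so I would phrase it via the fiber criterion: a fiber $\tilde\pi^{-1}([\ell])$ is singular only if some hyperplane containing $\Lambda_\ell$ is tangent to $X$.

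\emph{Second assertion.} Let $X$ be smooth. For general $V$ the base locus $B$ is smooth, so $\textup{Bl}_BX$ is smooth and $\tilde\pi$ is proper. If $\textup{codim}X^\perp>\dim\mathbb PV$, a general $\mathbb PV$ misses $X^\perp$, and the first part gives that $\tilde\pi$ is smooth everywhere. Conversely, suppose $\tilde\pi$ is smooth. Then $\Delta(\tilde\pi)=\emptyset$, so by Theorem \ref{thm_dual} the dual $(\mathbb PV\cap X^\perp)^\perp$ is empty. This forces $\mathbb PV\cap X^\perp=\emptyset$, since any nonempty variety (even a point) has nonempty dual. A general $k$-plane missing $X^\perp$ means $\textup{codim}X^\perp>k$. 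Equivalently, this is case (iii) of Theorem \ref{Thm2proof}, whose cases (i) and (ii) give nonempty $\Delta$ when $\textup{codim}\le k$. Note that this direction uses dominance, $\dim X\ge k$. If $\pi$ is not dominant, $\tilde\pi$ cannot be smooth (surjective differential is impossible), and $\textup{codim}X^\perp\le k$ automatically holds, since otherwise a general section of $X$ would have a non-defective\dots\ I would treat this edge case separately, or restrict to the dominant setting as in \eqref{intro_eqn_genproj}.

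The main obstacle is the first step's handling of the exceptional divisor: the duality theorem only controls critical points off $\mathbb PV^\perp$, so I need the transversality of $\mathbb PV^\perp$ to $X$ to rule out critical points on $E$.
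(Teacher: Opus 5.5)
Your argument is correct and is essentially the paper's: the paper also just invokes (the proof of) Theorem \ref{thm_mainthmequiv} to see that $\mathbb PV\cap X^\perp=\emptyset$ leaves no critical points on $(\textup{Bl}_BX)^{sm}$, and then uses that $\textup{Bl}_BX$ is smooth when $X$ is. Your treatment of the exceptional divisor and of the converse fills in what the paper leaves implicit. Two remarks: the transversality of $\mathbb PV^\perp$ to $X^{sm}$ that you need on $E$ already follows from the hypothesis, since a hyperplane containing $T_xX+\mathbb PV^\perp$ would be a point of $\mathbb PV\cap X^\perp$; and your unfinished non-dominant edge case is immediate, because there $\textup{codim}X^\perp\le\dim X+1\le k$, while $\tilde\pi$ cannot be smooth when $\dim X<k$.
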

\begin{proof}
    According to the proof of Theorem \ref{thm_mainthmequiv}, $\mathbb PV\cap X^{\perp}$ being empty implies that the critical locus of $(\textup{Bl}_BX)^{sm}\to \mathbb PV^{\vee}$ is empty. Hence if $X$ is smooth, so is $\textup{Bl}_BX$, and the morphism $\tilde{\pi}$ is smooth. 
\end{proof}


As an application, this recovers a classical result that dual defect does not occur in $\dim(X)=1$ and $2$.

\begin{proposition}[Dual defect for curve and surface] \cite[Proposition 3.1]{Ein86}\label{prop_no-defect-dim1,2}
   Let $X\subseteq \mathbb P^N$ be a smooth projective variety of dimension $1$ or $2$. Suppose $X$ is not a linearly embedded line or plane, then the dual variety $X^{\perp}$ is a hypersurface.
\end{proposition}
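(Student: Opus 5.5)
We argue by contradiction via Corollary \ref{cor_smooth-fibration}. Suppose $X^{\perp}$ is not a hypersurface, i.e.\ $c:=\textup{codim}(X^{\perp})\ge 2$. We take $k=1$: a general pencil $\mathbb PV\cong\mathbb P^1\subseteq(\mathbb P^N)^{\vee}$ misses $X^{\perp}$, since $c>1$. By Corollary \ref{cor_smooth-fibration}, the blow-up $\tilde\pi:\tilde X=\textup{Bl}_BX\to\mathbb P^1$ of the base locus $B=X\cap\mathbb PV^{\perp}$ is then a smooth proper morphism. The base locus $B$ is a general codimension-two linear section of $X$: empty if $\dim X=1$, and $\deg X$ reduced points if $\dim X=2$. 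The aim is to contradict the existence of such a smooth fibration by a topological Euler characteristic count. We argue over $\C$ via the Lefschetz principle, or alternatively with $\ell$-adic Euler characteristics, which have the same multiplicativity for smooth proper maps.

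\textbf{Curves.} If $\dim X=1$, then $B=\emptyset$ and $\tilde\pi=\pi:X\to\mathbb P^1$ is a finite étale map of degree $d=\deg X$. Since $\mathbb P^1$ is simply connected (Riemann--Hurwitz gives $2-2g=2d$), we get $d=1$. A degree-one curve is a line, contradicting the hypothesis.

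\textbf{Surfaces.} If $\dim X=2$, then $\tilde X$ is $X$ blown up at $d=\deg X$ points, so $\chi(\tilde X)=\chi(X)+d$. The fibres of $\tilde\pi$ are smooth hyperplane sections $C=X\cap H$, which are smooth connected curves of degree $d$ and genus $g$. Multiplicativity of $\chi$ for a smooth proper fibration gives
\[
\chi(X)+d=\chi(\tilde X)=2(2-2g).
\]
To obtain a second relation, we apply the curve case one step down, using the factorization idea of Lemma \ref{lemma_factor}. The dual $C^{\perp}$ of a general hyperplane section is, by Theorem \ref{thm_mainthmequiv} applied with roles swapped, the discriminant of a generic projection of $X^{\perp}$ to $H^{\vee}$. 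Since $\dim X^{\perp}\le N-3$ while $\dim H^{\vee}=N-1$, this projection is non-dominant, so $C^{\perp}=\pi(X^{\perp})$ has codimension at least $2$ in $H^{\vee}$. Thus $C$ is a dual defective curve, and by the curve case $C$ is a line. Hence $d=1$ and $X$ is a plane, again a contradiction.

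The surface case therefore reduces cleanly to the curve case through the hyperplane-section duality, and the Euler characteristic formula is not actually needed. The main point to check is that the dimension bookkeeping gives $\textup{codim}_{H^{\vee}}(C^{\perp})\ge 2$ when $\textup{codim}(X^{\perp})\ge2$, which is exactly what the non-dominant corollary after Theorem \ref{thm_mainthmequiv} (Tevelev's statement) provides. Some care is needed with the nondegeneracy conventions: "line" means that $C$ spans a $\mathbb P^1$, and a curve spanning a line is a line, whereas a surface whose general hyperplane section is a line has degree one and is therefore a plane.
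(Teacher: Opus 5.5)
Your curve case is correct and matches the paper's argument. The surface case, however, has a gap in the dimension count. If $c=\textup{codim}(X^{\perp})\ge 2$, then $\dim X^{\perp}=N-c\le N-2$, not $N-3$. For general $H$ the point $H^{\perp}$ lies off $X^{\perp}$, so the projection from it is finite on $X^{\perp}$. The non-dominant case of Theorem \ref{thm_mainthmequiv} therefore gives $C^{\perp}=\pi(X^{\perp})$ with $\dim C^{\perp}=\dim X^{\perp}$, hence
\[
\textup{codim}_{H^{\vee}}(C^{\perp})=(N-1)-(N-c)=c-1 .
\]
So the defect drops by exactly one under a general hyperplane section: $\ddef(X\cap H)=\ddef(X)-1$. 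When $c=2$ the curve $C$ is \emph{not} dual defective, and the curve case tells you nothing. For instance, the Segre threefold of Example \ref{example_segre3fold} has defect one, yet by this computation its general hyperplane section (the cubic scroll in $\mathbb P^4$) has a hypersurface as dual. Your reduction thus proves the statement only when $\textup{codim}(X^{\perp})\ge 3$. The case of defect exactly one, which is the crux, is left open.

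The Euler characteristic identity you set aside cannot fill this gap by itself. Using $2g-2=K_X\cdot H+d$, the relation $\chi(X)+d=2(2-2g)$ is equivalent to $c_2(X)+2K_X\cdot H+3d=0$, i.e.\ to $r_0(X)=0$. The linear plane satisfies this ($3-6+3=0$), so deriving a contradiction from it would need an additional positivity input.

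The paper closes the argument by using the whole smooth family $\tilde X=\Bl_BX\to\mathbb P^1$ rather than a single hyperplane section:
\begin{enumerate}
\item The family is isotrivial: by rigidity for $g\ge 2$, by the constant $j$-invariant for $g=1$, and because it is a $\mathbb P^1$-bundle for $g=0$.
\item Hence $\tilde X$ is either $F\times\mathbb P^1$ with $g(F)\ge 1$, or a Hirzebruch surface.
\item The $d$ exceptional curves over the base points are $(-1)$-curves. This rules out $F\times\mathbb P^1$ and forces $\tilde X\cong\mathbb F_1$, so $d=1$ and $X$ is a plane.
\end{enumerate}
Alternatively, Landman's parity theorem ($\ddef(X)\equiv\dim X \bmod 2$ whenever $\ddef(X)>0$) excludes defect one for surfaces. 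With that, your reduction to the curve case goes through. This is a substantially deeper input than anything in your proposal, so it would have to be cited explicitly.
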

\begin{proof}
 Suppose $X^{\perp}$ is not a hypersurface. If $\dim(X)=1$, then by Corollary \ref{cor_smooth-fibration}, a generic projection $X\to \mathbb P^1$ is unramified, which forces $X\cong \mathbb P^1$ to be a linearly embedded line; If $\dim(X)=2$, then a generic projection to $\mathbb P^1$ extends to a smooth family of curves
    $$\tilde{X}=\textup{Bl}_BX\to \mathbb P^1.$$
Then $\pi$ must be isotrivial, because of hyperbolicity of $M_g$ for $g\ge 2$, and by classification of elliptic surface for $g=1$. Hence $\tilde{X}$ is either a Hirzebruch surface or $F\times \mathbb P^1$ for a curve $F$ with $g(F)\ge 1$. They are both minimal surfaces, contradicting the fact that $\tilde{X}$ arises as a nontrivial blowup of $X$, unless $\tilde{X}$ is the first Hirzebruch surface $\mathbb F_1$, which arises from blowing up a linearly embedded $\mathbb P^2$ at one point. Hence, the claim follows.
    \end{proof}
\begin{example}[Defective 3-fold dual] \normalfont\label{example_segre3fold}
     The first example of the defective dual variety starts from dimension three: the Segre 3-fold $X=\mathbb P^1\times \mathbb P^2\hookrightarrow \mathbb P^5$ has dual defect one \cite{Ein86}. By Corollary \ref{cor_smooth-fibration}, the blow-up of $X$ along the base locus $B$, which is a twisted cubic, admits a smooth morphism
     $$\textup{Bl}_BX\to \mathbb P^1.$$
     Every fiber is isomorphic to the first Hirzebruch surface $\mathbb F_1$.

\end{example}

\begin{remark}[Singular total space]\normalfont \label{remark_twistedcubicdual}
  When $X$ is singular (particularly non-normal), then $\tilde{\pi}$ in Corollary \ref{cor_smooth-fibration} may not be smooth everywhere, 
    even passing to an appropriate desingularization. This is because the singular strata of $X$ may not be smooth over the base, and this is not captured by the Definition \ref{def_discriminant}. See an example below.
 
\end{remark}
\begin{example}[dual of twisted cubic]\normalfont\label{example_dual-twisted-cubic}
        We take $X$ to be the dual variety of a twisted cubic $C\cong X^{\perp}\subseteq \mathbb P^3$.     Since a general $\mathbb P^1$ will miss the curve $C$, i.e., $C\cap \mathbb P^1=\emptyset$, Theorem \ref{thm_dual} implies that the generic projection $X\dashrightarrow\mathbb P^1$ has an empty smooth discriminant. However, the morphism by blowing up the base locus $B$
$$\tilde{\pi}:\textup{Bl}_{B}X\to \mathbb P^1$$
is not smooth, even after passing to a suitable desingularization. In fact,  $X$ is a quartic surface and has transversal cuspidal singularities along the osculating curve $C'$. The curve $C'$ is isomorphic to the twisted cubic $C$ itself. Hence $B$ consists of four points and the general fiber of $\tilde{\pi}$ is a quartic curve with three cusps. The family $\tilde{\pi}$ is isotrivial, but has four reducible fibers corresponding to the four branch points of $C'\to \mathbb P^1$. The normalization $\widetilde{\Bl_BX}$ of $\Bl_BX$ is a weak del Pezzo surface of degree four, and the composition map 
$$\widetilde{\Bl_BX}\to \Bl_BX\to \mathbb P^1$$
has general fiber $\mathbb P^1$, but contains 4 reducible fibers.
\end{example}



\subsection{Reducible smooth discriminant} \label{sec_reducible-disc} In this subsection,  we give an example of case (ii) of Theorem \ref{Thm2proof}. Note that if $X$ is smooth, then generic projection $X\dashrightarrow \mathbb P^k$ has reducible discriminant with $k\ge 2$ only occurs when $X$ is dual defective (see Corollary \ref{cor_smooth-fibration}). As we have seen in Proposition \ref{prop_no-defect-dim1,2}, nontrivial examples start from dimension three. We give a simple example where the total space is singular and is a two-dimensional total.

\begin{example}[dual of spatial curves]\normalfont \label{example_dualspatialcurve}
    Let $C\subseteq \mathbb P^3$ be a projective curve of degree $d$, and let $X=C^{\perp}$ be the dual variety (also see Example \ref{example_dual-twisted-cubic}). Then $X$ is a surface in the dual space $(\mathbb P^3)^{\vee}$. It is singular along a curve $C'$, called the osculating curve associated with $C$. Choosing a general plane $\mathbb P^2\subseteq \mathbb P^3$, then the projection
    $$X\to (\mathbb P^2)^{\vee}$$
    is a branched cover, and its smooth discriminant consists of $d$ lines, which is the dual of $\mathbb P^2\cap C=d$ points, by Theorem \ref{thm_mainthmequiv}. The converse also holds—If the generic projection of a surface $X\subseteq \mathbb P^3$ to $\mathbb P^2$ has smooth discriminant being the union of lines, then the dual variety of $X$ is a curve. This is compatible with the fact that dual variety $C^{\perp}$ is the tangential variety of the osculating curve $C'$ \cite[p.640]{Dolgachev-classicalAG}.
\end{example}

\section{Examples and Applications}\label{sec_example}
In this section, we illustrate the applications of the main theorems \ref{thm_dual} and \ref{thm_Dirreducible-hypersurface} in three examples involving surfaces and threefolds. In Section \ref{sec_Veronese-surface}, we take successive generic projections of Veronese surface to hyperplanes. Correspondingly in the dual space, we find special semistable cubic hypersurfaces that are related by taking successive general hyperplane sections; In Section \ref{subsec_nodes&cusps}, we compute the number of nodes and cusps of the discriminant curve arising from the generic projection of a smooth projective surface and of the Veronese threefold.

\subsection{Veronese surface and sections of cubics} \label{sec_Veronese-surface}

 Let $v_2(\mathbb P^2)$ be the Veronese image $v_2:\mathbb P^2\hookrightarrow \mathbb P^5$. Then its dual variety $v_2(\mathbb P^2)^{\perp}$ is the discriminant hypersurface parameterizing singular planar conics \cite[Sec. 2.1.1]{Dolgachev-classicalAG}. Moreover, it is a cubic fourfold singular along a smooth surface isomorphic to the Veronese surface $v_2(\mathbb P^2)$. We denote this dual variety by $M_4$. Define $M_i=M_{i+1}\cap H$ inductively a general hyperplane section of $M_{i+1}$ for $i\le 3$. 
 
 Then Theorem \ref{thm_dual} implies that $M_i$ is projective dual to the discriminant of $v_2(\mathbb P^2)\to \mathbb P^{i+1}$. We summarize these projective dualities in the following table:
 

   

\begin{table}[h!]
\begin{center}
\begin{tabular}{|c|c|c|c|} 
 \hline
Discriminant loci of generic projections of $v_2(\mathbb P^2)$ & Hyperplane sections of $M_4:=v_2(\mathbb P^2)^{\perp}$ \\ \hline\hline
$v_2(\mathbb P^2)\hookrightarrow\mathbb P^5$ Veronese surface  & $M_4=$ Cubic fourfold singular along $v_2(\mathbb P^2)$ \\ \hline
$v_2(\mathbb P^2)\hookrightarrow \mathbb P^4$ smooth image &  $M_3=$ Cubic threefold singular along $C_4$ \\ \hline
$v_2(\mathbb P^2)\looparrowright \mathbb P^3$ Steiner quartic surface& $M_2=$ Cubic surface with 4$A_1$ singularities \\ \ChangeRT{1.5pt}
$v_2(\mathbb P^2)\xrightarrow{4:1} \mathbb P^2$ branched along a sextic curve& $M_1=$ smooth cubic curve  \\ \hline
$v_2(\mathbb P^2)\dashrightarrow\mathbb P^1$ quartic fibration & $M_0=$ 3 points  \\ \hline
\end{tabular}
\caption{Projective duality between discriminant locus of generic projections of the Veronese surface $v_2(\mathbb P^2)$ and successive hyperplane sections of dual variety $v_2(\mathbb P^2)^{\perp}$. }
\label{table}
\end{center}
\end{table}
The first three rows correspond to non-dominant projections, so by our convention the discriminant is the projection image. $C_4$ is the rational normal quartic, as a hyperplane section of $v_2(\mathbb P^2)$. The fourth row corresponds to a dominant finite projection, whose discriminant is the branch locus. The last row corresponds to a Lefschetz fibration, whose discriminant is the set of critical values.

The computation uses the fact that the Veronese surface is a Severi variety \cite[Theorem 2.4]{Zak_tangents&secants}. These cubic hypersurfaces also appear in moduli theory \cite{CMSHL21,Loo09}: $M_3$ (resp. $M_4$) is strictly semistable in the GIT compactification of cubic threefolds (resp. fourfolds). For $2\le i\le 4$, the cubic hypersurface $M_i$ is unique up to projective equivalence, and corresponds under projective duality to the image of a non-dominant generic projection of $v_2(\mathbb P^2)$.


\subsection{Discriminant curves and their singularities}\label{subsec_nodes&cusps}
In this section, we study the discriminant curve of a generic projection of a smooth projective variety to a plane
$$X\dashrightarrow \mathbb P^2.$$
When $X$ is not dual defective, then the discriminant locus $\Delta$ is an irreducible curve (cf. Theorem \ref{thm_Dirreducible-hypersurface}). The singularities $\Delta$ are expected to only have ordinary nodes and cusps. Under this assumption, the numbers of nodes $\delta$ and cusps $\kappa$ can be explicitly deduced from the duality theorem \ref{thm_dual} and Pl\"ucker formulas. 

In the case when $X$ is a smooth projective surface and the Veronese 3-fold, it is known $\Delta$ only have nodes and cusps. We demonstrate how to find $\delta$ and $\kappa$ below. 

\subsubsection{Generic projection of surfaces}
We revisit the discriminant invariants of generic projection of a surface computed in \cite{CMT01}, which are used to describe a special type III degeneration of K3 surfaces. Here we give a different proof of the results using projective duality.

\begin{theorem}\cite{CMT01, CilFla11}
    Let $S\subseteq \mathbb P^N$ be a smooth surface and $\pi:S\to \mathbb P^2$ a generic projection. Then its discriminant curve $\Delta$ has only nodes and cusps. Its degree, the number of nodes $ \delta$ and cusps $\kappa$ equal to 
\begin{align}
\deg(\Delta)&=3\deg(S)+K_S\cdot H,\\
   \delta&=\frac12\deg(\Delta)^2-15\deg(\Delta)-3K_S^2+24\deg (S)+c_2(S),\\
    \kappa&=9\deg(\Delta)+2K_S^2-15\deg(S)-c_2(S).  
\end{align}
\end{theorem}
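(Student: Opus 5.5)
The plan is to take the singularity statement (only ordinary nodes and cusps) from the classical theory of generic projections, as in \cite{CMT01, CilFla11}, and derive the three numerical formulas from Theorem \ref{thm_dual}, Proposition \ref{prop_one-to-one} and the Pl\"ucker formulas \eqref{eqn_Plucker1}, \eqref{eqn_Plucker2}. With only nodes and cusps, $\delta$ and $\kappa$ are determined by three invariants of $\Delta$: its degree $d$, its geometric genus $g$, and its class $d^{\perp}=\deg \Delta^{\perp}$. I compute each of these separately and then solve a linear system. Throughout, $H$ denotes the hyperplane class on $S$.

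\emph{Degree and genus.} Take the blow-up of the base locus (empty here, since $\pi$ is a morphism). The smooth critical locus $\mathcal C(\pi)$ is the ramification curve $R\subseteq S$, i.e.\ the degeneracy locus of $d\pi:T_S\to \pi^*T_{\mathbb P^2}$. Its class is therefore $R\sim K_S+3H$, obtained from $c_1(\pi^*T_{\mathbb P^2})-c_1(T_S)$ with $\pi^*\mathcal O(1)=H$. By Proposition \ref{prop_one-to-one}, $R\to\Delta$ is birational, so
\[
\deg\Delta=R\cdot H=3\deg(S)+K_S\cdot H.
\]
For general projection, $R$ is smooth; this is again part of the classical genericity input and is consistent with $\Delta$ having only nodes and cusps. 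Hence the geometric genus of $\Delta$ is $g(R)$, and adjunction gives
\[
2g-2=R\cdot(R+K_S)=(K_S+3H)\cdot(2K_S+3H).
\]
Feeding this into \eqref{eqn_Plucker2} gives
\[
\delta+\kappa=\tfrac{(d-1)(d-2)}{2}-g.
\]

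\emph{Class.} By Theorem \ref{thm_dual}, $\Delta^{\perp}=\mathbb P^2\cap S^{\perp}$ is a general plane section of the dual variety. If $S$ is not dual defective, then $d^{\perp}=\deg S^{\perp}$ is the number of singular fibers of a Lefschetz pencil. Comparing Euler characteristics on the blow-up of $S$ at the $H^2$ base points, with smooth fibers of genus $g_H$ where $2g_H-2=H^2+K_S\cdot H$, gives
\[
d^{\perp}=c_2(S)+3H^2+2K_S\cdot H.
\]
(In the dual defective case $d^{\perp}$ is computed by the same count, which then returns $0$ as the number of singular members.) Now \eqref{eqn_Plucker1} gives $2\delta+3\kappa=d(d-1)-d^{\perp}$. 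Subtracting twice the genus relation isolates $\kappa$, and then $\delta$ follows. Finally I would expand everything using $d=3\deg(S)+K_S\cdot H$ to eliminate $H^2$ and $K_S\cdot H$ in favour of $\deg(\Delta)$, $\deg(S)$, $K_S^2$ and $c_2(S)$. This is routine bookkeeping, and I would check the result against the Veronese surface row of Table \ref{table}: a sextic branch curve whose dual is a smooth cubic, which forces $\delta=0$ and $\kappa=9$.

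\emph{Main obstacle.} The hard step is the assertion that $\Delta$ has only ordinary nodes and cusps and that $R$ is smooth. Duality alone does not give this: knowing that $\mathbb P^2\cap S^{\perp}$ is a general plane section says nothing immediate about the singularities of its dual. I would first try to cite the local classification of generic projections of surfaces to the plane, namely Whitney folds and cusps together with transverse double folds, as in \cite{CilFla11}. Failing that, I would argue by a dimension count in the space of projection centers, showing that tacnodes, triple points and cusp-fold coincidences occur in codimension at least one.
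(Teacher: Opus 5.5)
Your proposal is correct and follows the paper's proof essentially step by step. The paper likewise takes the nodes-and-cusps statement, and the fact that $\tilde\Delta\to\Delta$ is a desingularization, from \cite{CilFla11}; it gets $\deg\Delta$ from $K_S=\pi^*K_{\mathbb P^2}+\tilde\Delta$, uses Theorem~\ref{thm_mainthmequiv} to identify $\Delta^{\perp}$ with a plane section of $S^{\perp}$, computes the genus by adjunction on $\tilde\Delta$, and finishes with the Pl\"ucker formulas, the only difference being that you derive $\deg S^{\perp}=c_2(S)+2K_S\cdot H+3\deg(S)$ by an Euler characteristic count on a Lefschetz pencil whereas the paper quotes it from \cite[IV,64]{Kle77}.
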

\begin{proof}The branch cover $S\to \mathbb P^2$ has ramification locus $\tilde{\Delta}$. In \cite{CilFla11}, it shows that the map $\tilde{\Delta}\to \Delta$ is a desingularization, and the discriminant curve $\Delta$ has only nodes and cusps as its singularities. Here we take this fact.

The degree of $\Delta$ follows from the adjunction formula $K_S=\pi^*K_{\mathbb P^2}+\tilde{\Delta}$, where $\tilde{\Delta}\subseteq S$ is the ramification divisor, and the fact that $\pi^*K_{\mathbb P^2}=\mathcal{O}_S(-3)$.

   Theorem \ref{thm_mainthmequiv} implies that the dual curve $\Delta^{\perp}$ of $\Delta$ is a plane section $S^{\perp}\cap (\mathbb P^2)^{\vee}$ of the dual variety of $S$. Therefore, we obtain the degree of $\Delta^{\perp}$
$$d^{\perp}=\deg(S^{\perp})=c_2(S)+2K_S\cdot H+3\deg(S)$$
from the formula of degree of dual variety \cite[IV,64]{Kle77}. Since dual curves are birational and have the same geometric genus, we apply the adjunction formula to $\tilde{\Delta}\subseteq S$ and find the genus $g^{\perp}$ of $\Delta^{\perp}$:
$$g^{\perp}=g(\tilde{\Delta})=\frac12[2K_S^2+9K_S\cdot H+9\deg(S)]+1.$$ 
Finally, the number of nodes and cusps follow from the Pl\"ucker formulas \eqref{eqn_Plucker1}, \eqref{eqn_Plucker2}, and knowledge of $d^{\perp},g^{\perp}$, and $\deg(\Delta)$.
\end{proof}

\begin{example}[Surface in $\mathbb P^3$]\label{prop_S->P^2}\normalfont
 When $S_d\subseteq \mathbb P^3$ is a smooth surface of degree $d$, then the generic projection $S_d\to \mathbb P^2$
has branch curve $\Delta$ with degree $$\deg(\Delta)=d(d-1).$$ 
The number of nodes and cusps of $\Delta$ equal to $$\delta={1\over 2}d(d-1)(d-2)(d-3),\ \kappa=d(d-1)(d-2),$$ respectively. Specifically in low degrees:
\begin{itemize}
    \item[$d=2$] A quadric surface $S_2\to \mathbb P^2$ is a double cover branched along a smooth conic (compare with Remark \ref{remark_proj-from-p-on-Q});
    \item[$d=3$] A cubic surface $S_3\to \mathbb P^2$ is a triple cover branched along a plane sextic curve with 6 cusps. (In fact, one can show that the 6 cusps lie on a conic, so these sextic curves lie in a distinguished component of Zariski pair \cite{Zariskipair}.)
\end{itemize}
\end{example}

\subsubsection{A net of quadrics and an elliptic threefold}\label{sec_sub_ell-3fold}

\begin{example}[Net of quadrics]\normalfont\label{example_MWrank7} 
Let $X=v_2(\mathbb P^3)$ be the image of the second Veronese embedding of $\mathbb P^3$. Then the generic projection 
  \begin{equation}\label{intro_eqn_v2(P^3)}
     v_2(\mathbb P^3)\dashrightarrow \mathbb P^2 
    \end{equation}
has general fiber being a genus one curve as a complete intersection of two quadrics. The base locus is the complete intersection of three general quadrics and consists of 8 points, which is called a Cayley octad (see \cite{CayleyOctad18}). 

We characterize the discriminant locus below.
\begin{proposition}
   The generic projection \eqref{intro_eqn_v2(P^3)} extends to an elliptic threefold
\begin{equation}\label{eqn_ell3fold}
    \tilde{\pi}:\widetilde{v_2(\mathbb P^3)}\to \mathbb P^2,
\end{equation}
whose discriminant $\Delta$ is an irreducible curve of degree $12$, and parameterizes singular fibers described below: 
\begin{itemize}
    \item nodal fibers over smooth points of $\Delta$; 
    \item $28$ Kodaira type $I_2$ fibers (union of two conics meeting at two points) over the nodes of $\Delta$;
    \item $24$ cuspidal fibers over the cusps of $\Delta$.
\end{itemize} 

\end{proposition}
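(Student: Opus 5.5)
The plan is to apply Theorem \ref{thm_dual} with $X=v_2(\mathbb P^3)\subseteq \mathbb P^9$, identify the linear section $\mathbb PV\cap X^{\perp}$ with a smooth plane quartic, and then read off $\Delta$ and its singular fibers from the Pl\"ucker formulas and the classification of pencils of quadrics.

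\emph{Setup and the dual quartic.} A point of $(\mathbb P^9)^{\vee}$ is a quadric in $\mathbb P^3$, and $X^{\perp}$ is the locus of singular quadrics. This is the symmetric determinantal quartic hypersurface $\{\det A=0\}$, singular exactly along the rank $\le 2$ locus, which has codimension $3$. A general $\mathbb PV\cong\mathbb P^2$ is a general net of quadrics $\langle Q_0,Q_1,Q_2\rangle$. The projection \eqref{intro_eqn_v2(P^3)} is $x\mapsto[Q_0(x):Q_1(x):Q_2(x)]$.
\begin{itemize}
\item The base locus is $Q_0\cap Q_1\cap Q_2$, which is $8$ reduced points.
\item Blowing up these points gives the morphism \eqref{eqn_ell3fold} with smooth total space.
\item The fiber over a point $t\in\mathbb P^2=\mathbb PV^{\vee}$ is the base locus of the pencil $\ell_t\subseteq\mathbb PV$ dual to $t$. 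This is a complete intersection of two quadrics, so the general fiber is a genus one curve.
\end{itemize}
Since the total space is smooth, the discriminant is clean by Lemma \ref{lemmma_disc_vs_smoothdisc}. Theorem \ref{thm_dual} then gives $\Delta=(\mathbb PV\cap X^{\perp})^{\perp}=Q^{\perp}$, where $Q=\{\det(t_0A_0+t_1A_1+t_2A_2)=0\}$ is the discriminant quartic of the net. Because $\mathbb PV$ is general, it misses the codimension-$3$ rank $\le 2$ locus, so $Q$ is a smooth plane quartic. Irreducibility of $\Delta$ also follows from Theorem \ref{thm_Dirreducible-hypersurface}(i), since the codimension of $X^{\perp}$ is $1<2$.

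\emph{Degree and singularities of $\Delta$.} By Hesse's classical theorem, every smooth plane quartic is a symmetric determinant. Hence the map from general nets to plane quartics is dominant, and $Q$ is a \emph{general} smooth quartic. In particular $Q$ has exactly $28$ ordinary bitangents, $24$ ordinary flexes, no hyperflexes and no tritangents. The dual curve $\Delta=Q^{\perp}$ then has degree $4\cdot 3=12$ by \eqref{eqn_Plucker1} with $\delta=\kappa=0$. Its only singularities are $28$ nodes, coming from the bitangents, and $24$ cusps, coming from the flexes, by the dual Pl\"ucker correspondence recalled after \eqref{eqn_Plucker2}.

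\emph{Singular fibers via Segre symbols.} A point $t\in\Delta$ is a line $\ell_t$ tangent to $Q$. The curve $C_t$ is the base locus of the pencil $\ell_t$, and its type is governed by how the binary quartic $\det|_{\ell_t}$ meets $Q$, recorded by the Segre symbol of the pencil.
\begin{itemize}
\item At a smooth point of $\Delta$, $\ell_t$ is simply tangent at one point of $Q$ and meets it transversally at two others. The Segre symbol is $[2,1,1]$, so the pencil contains one cone whose vertex is a singular point of $C_t$, and $C_t$ is an irreducible nodal quartic.
\item At a node of $\Delta$, $\ell_t$ is a bitangent, so the binary quartic has two double roots. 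Since $\ell_t$ avoids the rank $\le 2$ locus, each double root has rank $3$, giving Segre symbol $[2,2]$. Then $C_t$ is two conics meeting transversally in two points, a fiber of Kodaira type $I_2$.
\item At a cusp of $\Delta$, $\ell_t$ is a flex line, so there is a triple root with rank-$3$ member. The Segre symbol is $[3,1]$, and $C_t$ is a cuspidal quartic.
\end{itemize}

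\emph{Main obstacle.} The delicate step is this last fiber classification. It requires matching the root multiplicities of $\det|_{\ell_t}$ (which are intersection multiplicities with the smooth curve $Q$) with the Segre symbols, and checking that the generic rank-$3$ condition forces $[2,2]$ and $[3,1]$ rather than the degenerate symbols $[(11)11]$ or $[(21)1]$. I would settle it using the normal forms of pencils of quadrics, writing each case explicitly. The genericity input, that $Q$ is a general quartic, I would take from Hesse's theorem rather than prove directly.
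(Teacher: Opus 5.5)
Up to the computation of $\Delta$, your route is the paper's. Theorem~\ref{thm_dual} turns $\Delta^{\perp}$ into a general plane section of the symmetric determinantal quartic. That section is a smooth quartic $Q$ because a general plane misses the codimension-$3$ rank $\le 2$ locus; the paper phrases this as normality of the quartic. You get generality of $Q$ from Hesse's theorem where the paper cites \cite{Ter10}, and Pl\"ucker then gives degree $12$, $28$ nodes and $24$ cusps. The paper's sketch stops there (``the rest follows''), so your Segre-symbol analysis of the fibers is additional. Its $[2,1,1]$ and $[3,1]$ cases, giving nodal and cuspidal quartics, are correct.

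The step that fails is ``Segre symbol $[2,2]$, hence $C_t$ is two conics meeting in two points.'' Suppose the base curve of a pencil were two conics in planes $\{l=0\}$ and $\{l'=0\}$. This complete intersection is reduced with saturated ideal, so the rank-$2$ quadric $ll'$ would belong to the pencil. That is precisely the symbol $[(11)11]$, which your own genericity argument (no member of rank $\le 2$) rules out. For $[2,2]$, the vertex of each of the two cones lies on every quadric of the pencil; this is what a multiple root with a rank-$3$ member means. Hence the line through the two vertices lies on both cones and so in $C_t$, and the residual curve is a twisted cubic meeting that line in two points. For example, $\langle x_0x_1+x_2x_3,\ x_1^2-2x_0x_1+x_3^2\rangle$ has symbol $[2,2]$ and cone vertices $[1:0:0:0]$ and $[0:0:1:0]$. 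Its base curve is the line $x_1=x_3=0$ plus a twisted cubic. Classically, the $28$ bitangents of $Q$ correspond to the $28$ lines $\overline{p_ip_j}$ joining two points of the Cayley octad. The corresponding fiber is $\overline{p_ip_j}$ together with the twisted cubic through the other six base points. So these fibers are indeed of Kodaira type $I_2$, and the rest of your proof stands. However, the description ``two conics meeting at two points'' is false for a general net, both in your argument and in the parenthetical of the statement, which the paper's sketch never justifies. It should read ``a line and a twisted cubic meeting transversally in two points.''
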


\begin{figure}[h]
\centering
\includegraphics[width=0.5\textwidth]{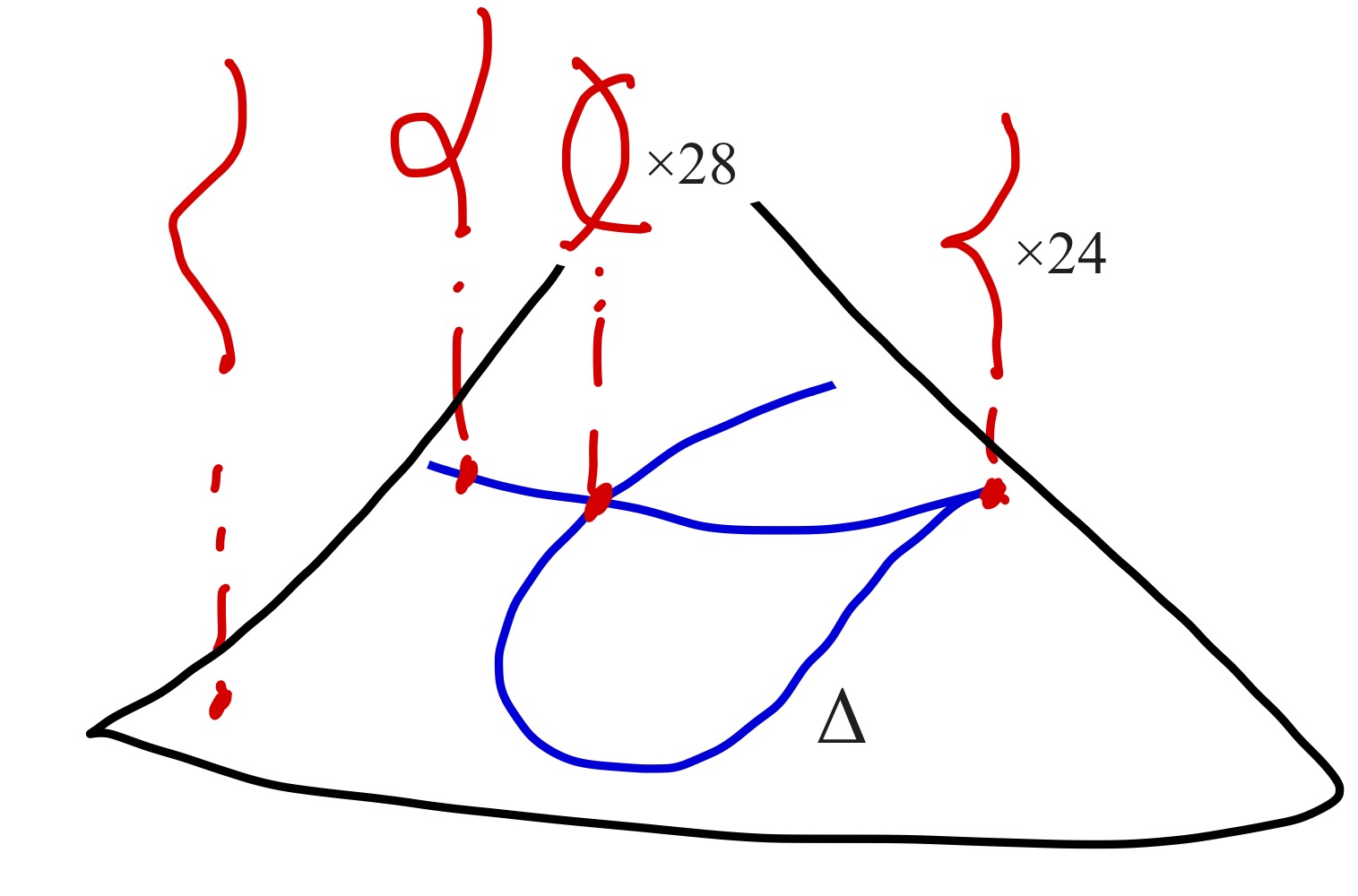}
\caption{Discriminant of generic projection of the Veronese threefold}
\end{figure}

We give a sketch of the proof. We note that the "dual family" of \eqref{intro_eqn_v2(P^3)} is a general net of quadric surfaces, namely, a general two dimensional family of the hyperplane sections of $v_2(\mathbb P^3)$. The discriminant $\Delta^{\perp}$ is a general plane section of the dual variety $v_2(\mathbb P^3)^{\perp}$. By classical invariant theory  \cite[p.16]{Dolgachev-classicalAG}, this dual variety parameterizes degenerate quadrics on $\mathbb P^3$. Equivalently, it is isomorphic to the space of degenerate $4\times 4$ symmetric matrices:
\begin{equation*}\label{eqn_quartic-hypersurface}
    v_2(\mathbb P^3)^{\perp}\cong \{A\in Mat_{4\times 4}|A^T=A,\ \det(A)=0\}.
\end{equation*}
 In particular, $v_2(\mathbb P^3)^{\perp}$ is a normal quartic hypersurface. Hence $\Delta^{\perp}$ is a smooth plane quartic curve, and \cite{Ter10} guarantees that it is general in the moduli. Therefore, by classical algebraic geometry, $\Delta^{\perp}$ has 28 bitangents and 24 flex lines. The rest follows from Theorem \ref{thm_dual} and Pl\"ucker formulas \eqref{eqn_Plucker1}, \eqref{eqn_Plucker2}.
 \end{example}

Recall that the Mordell--Weil group of an elliptic fibration is the group of rational sections, whose rank is called the Mordell--Weil rank. We recover the computation of the Mordell--Weil rank of the elliptic 3-fold \eqref{eqn_ell3fold} below, which is first obtained in \cite[Theorem 7.2]{Tot08}.
\begin{proposition}\cite{Tot08}
   The Mordell--Weil rank of the elliptic threefold \eqref{eqn_ell3fold} is  7. 
\end{proposition}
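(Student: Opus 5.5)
We compute the Mordell--Weil rank through the Shioda--Tate--Wazir formula for elliptic threefolds,
\[
\operatorname{rk} MW(\tilde\pi)=\rho(\tilde X)-\rho(\mathbb P^2)-1-\#\{\text{extra fibral divisors}\},
\]
where $\tilde X=\widetilde{v_2(\mathbb P^3)}$. The plan is to compute $\rho(\tilde X)$ and to show that no fibral divisor occurs beyond the pullback of $\mathcal O_{\mathbb P^2}(1)$.

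\textbf{Model of $\tilde X$.} Realize $\tilde X$ as $\textup{Bl}_B\mathbb P^3$. Here $v_2(\mathbb P^3)\cong\mathbb P^3$, and the projection is given by the net of quadrics $\langle Q_0,Q_1,Q_2\rangle$. The base locus $B$ is the Cayley octad, which consists of $8$ reduced points, and blowing them up resolves the map. Since $\tilde X$ is smooth, $\rho(\tilde X)=1+8=9$.

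\textbf{Fibral divisors.} The degenerate fibers were listed in the previous proposition. The $I_2$ fibers lie over the $28$ nodes of $\Delta$, which are finitely many points. A divisor contained in fibers must dominate a curve in $\mathbb P^2$. Over the curve $\Delta$, however, the generic singular fiber is an irreducible nodal curve, so the preimage $\tilde\pi^{-1}(\Delta)$ is irreducible: it is irreducible over the generic point of $\Delta$, and $\Delta$ itself is irreducible by Theorem \ref{thm_Dirreducible-hypersurface}. Consequently the only vertical divisor classes are multiples of $\tilde\pi^*H$, and the correction term vanishes.

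\textbf{Zero section and conclusion.} Pick one exceptional divisor $E_1$ over a point of the octad as the zero section. This is legitimate because each $E_i\cong\mathbb P^2$ maps isomorphically onto the base: the quadrics of the net restrict to the tangent space at the base point as three independent linear forms, since the net is general. The Shioda--Tate--Wazir formula, valid because the singular fibers are not too bad in codimension two (only $I_1$, $I_2$ and cusps occur over finitely many points), gives rank $=9-1-1-0=7$. Equivalently, the differences $E_i-E_1$ for $i=2,\dots,8$ generate a subgroup of rank $7$.

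\textbf{Main obstacle.} The delicate step is justifying Shioda--Tate--Wazir. It requires a flat fibration, or at least control over the equidimensionality of the fibers. One has to check that $\tilde\pi$ has no $2$-dimensional fibers; this should follow from generality of the net, since no quadric surface is contained in the span of the fibers. One also has to check that the $I_2$ fibers over the $28$ nodes add no divisors, which holds because they lie over points. If the formula proves awkward to apply directly, the fallback is to verify independence of $E_i-E_1$ via the height pairing and to bound the rank above by $\rho(\tilde X)-2$.
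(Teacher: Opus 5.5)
Your proposal is correct and follows the same route as the paper: $\rho(\widetilde{v_2(\mathbb P^3)})=9$ from blowing up $\mathbb P^3$ at the eight reduced points of the Cayley octad, no extra fibral divisors because $\Delta$ is irreducible with irreducible nodal general fiber, and the Shioda--Tate--Wazir formula then gives rank $9-1-1-0=7$. Your explicit zero section (an exceptional divisor $E_1\cong\mathbb P^2$ mapping isomorphically to the base) and your check that the fibration has no $2$-dimensional fibers are hypotheses of that formula which the paper uses without stating them.
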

\begin{proof}
  By construction, the elliptic threefold $\widetilde{v_2(\mathbb P^3)}$ has Picard rank 9.
   The discriminant locus is irreducible, and a general fiber over it is nodal curve, so by Shioda-Tate-Wazir formula \cite{Wazir04}, we have
$$\rho(\widetilde{v_2(\mathbb P^3)})=1+\rho(\mathbb P^2)+r+0,$$
where $r$ is the Mordell--Weil rank, hence $r=7$ as claimed.
\end{proof}
Note that there is an interest in constructing an elliptic threefold with a large Mordell--Weil rank over $\mathbb P^2$ (or rational surfaces), see \cite{CogoLib14,HulekKloo11}. The current record is 10 \cite{GrassiWei22}, but it is isotrivial. The above seems to give the largest Mordell--Weil rank (among known examples) for non-isotrivial ones.

\begin{remark}\normalfont
  By Theorem \ref{thm_dual}, the discriminant curve $\Delta$ of $X\dashrightarrow \mathbb P^2$ is the dual curve of a general plane section of the dual variety $\Delta^{\perp}=(\mathbb P^2)^{\vee}\cap X^{\perp}$. Suppose $\Delta^{\perp}$ has only nodes and cusps, then their numbers $\delta^{\perp}$ and $\kappa^{\perp}$ are computed in \cite[(2.2), (2.3)]{DolLib81}, \cite{Roberts}. Then $g(\Delta^{\perp})=g(\Delta)$, $\deg(\Delta^{\perp})$, $\delta^{\perp}$ and $\kappa^{\perp}$ are related to $\deg(\Delta)$, $\delta$, $\kappa$ via the Pl\"ucker formulas. In general, this method can characterize codimension-one singularities of discriminant $\Delta$ for generic projection $X\dashrightarrow \mathbb P^k$ for any $k$.
\end{remark}


\section{Polar locus and degree of smooth discriminant}\label{sec_polar}

In this section, we show that the smooth critical locus of a generic projection (cf. Definition \ref{def_smoothcrit_rat}) coincides with the notion of polar locus in the literature. As a consequence, the degree of smooth discriminant admits an intersection theory formula. We recover some classical identities on the degree of polar loci.  

\subsection{Polar locus}
Let $X\subseteq \mathbb P^{N}$ be an irreducible projective variety. Recall that $N^{\perp}_{\mathbb P^N}(X)$ is the conormal variety \eqref{eqn_conormal}. It has dimension $N-1$ and admits two projections: the first projection $\pi_1$ to $X$ is generically a projective bundle, and the second projection $\pi_2$ has image the dual variety $X^{\perp}$. Following \cite[Cha. II]{Kle86}, for each $0\le i\le N$, one defines the intersection number
\begin{equation}\label{eqn_r_i}
    r_i(X)=\int_{N^{\perp}_{\mathbb P^N}(X)}\pi_1^*H^i\cdot \pi_2^*(H')^{N-1-i},
\end{equation}
 where $H=c_1\mathcal{O}_{\mathbb P^N}(1)$ and $H'=c_1\mathcal{O}_{(\mathbb P^N)^{\vee}}(1)$ are hyperplane classes on $\mathbb P^N$ and the dual space $(\mathbb P^N)^{\vee}$, respectively. 
 
 These intersection numbers have special geometric meanings. For example, we assume $X$ is irreducible and denote $n=\dim(X)$ as usual, then note that by the projection formula, $r_n=\deg(X)$ is the degree of $X$; $r_i=0$ when $i>n$; and 
 \begin{equation*}
r_0=\begin{cases}\deg(X^{\perp}), \ \textup{if}\  X^{\perp}\ \textup{is a hypersurface};\\
0,\ \textup{otherwise}.
\end{cases}.    
 \end{equation*}
 
  In general, $r_i$ counts the number of $N-1-i$ planes that are tangent to $X$ in a general linear pencil tangent to $X$.  
  
  \begin{definition}\textup{\cite[p. 186]{Kle86}}\normalfont 
  \ The \textit{$i$-th polar locus} of $X$ with respect to a general codimension $i+2$ linear subspace $A_i\subseteq \mathbb P^N$ is defined to be 
$$X(A_i)=\overline{\{p\in X^{sm}|\dim T_pX\cap A=n-i-1\}},$$
consisting of points where the corresponding tangent plane intersects $A_i$ having larger dimension than expected. 
  \end{definition}
 The intersection number \eqref{eqn_r_i} has the following equivalent characterization:
\begin{proposition}\cite[p.187]{Kle86} For $0\le i\le n$, the intersection number $r_i$ coincides with the the degree of $i$-th polar locus $$r_i=\deg(X(A_i)).$$
\end{proposition}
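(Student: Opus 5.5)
The plan is to realize both sides of the equality by the same cycle on the conormal variety $N^{\perp}_{\mathbb P^N}(X)$. Fix a general linear subspace $A_i\subseteq\mathbb P^N$ of codimension $i+2$. Its dual $L:=A_i^{\perp}\subseteq(\mathbb P^N)^{\vee}$ is a general linear subspace of dimension $i+1$, hence of codimension $N-1-i$. The whole argument rests on one set-theoretic identification:
\[
X(A_i)=\pi_1\big(\pi_2^{-1}(L)\big).
\]
To see this, take $p\in X^{sm}$ with $n=\dim X$. The expected dimension of $T_pX\cap A_i$ is $n+(N-i-2)-N=n-i-2$, so $p\in X(A_i)$ exactly when this intersection is too large. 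That happens if and only if $\textup{Span}(T_pX,A_i)\neq\mathbb P^N$. Equivalently, some hyperplane $H$ contains both $T_pX$ and $A_i$, which says $(T_pX)^{\perp}\cap L\neq\emptyset$. Since $(T_pX)^{\perp}$ is the fiber $\pi_1^{-1}(p)$, this is the same as $p\in\pi_1(\pi_2^{-1}(L))$. Taking closures over $X^{sm}$ gives the identification.

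This is the same mechanism as in Proposition \ref{prop_critical-locus} and Lemma \ref{lemma_C(pi)general}. There the critical locus of a linear projection was described by the condition that a hyperplane contains both a tangent space and the fiber of the projection. Here we apply it on the $X$ side, to the projection from the center $A_i$.

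With the identification in hand, the intersection-theoretic step is as follows. For general $L$, Kleiman's transversality theorem in characteristic $0$ (applied to $PGL$ acting on $(\mathbb P^N)^{\vee}$) gives two things. First, $\pi_2^*(H')^{N-1-i}$ is represented by the reduced cycle $[\pi_2^{-1}(L)]$. Second, this preimage is either empty or of pure dimension $(N-1)-(N-1-i)=i$. By the generality Lemma \ref{lemma_generalityV}, applied to $L$ in place of the hyperplanes $H_j$, no component of $\pi_2^{-1}(L)$ lies in the complement of the dense open set $(N^{\perp}_{\mathbb P^N}X)^{\circ}$. The projection formula then gives
\[
r_i=\int \pi_1^*H^i\cdot[\pi_2^{-1}(L)]=\int_{\mathbb P^N} H^i\cdot \pi_{1*}[\pi_2^{-1}(L)].
\]
So it suffices to show that $\pi_1:\pi_2^{-1}(L)\to X(A_i)$ is birational onto its image, and that this image has dimension $i$. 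Granting that, $\pi_{1*}[\pi_2^{-1}(L)]=[X(A_i)]$, and hence $r_i=\deg X(A_i)$. In the degenerate case where $\pi_2^{-1}(L)=\emptyset$, both sides are zero and there is nothing to prove.

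The main obstacle is the generic injectivity of $\pi_1$ on $\pi_2^{-1}(L)$. Concretely, for general $p\in X(A_i)$ one needs the linear space $(T_pX)^{\perp}\cap L$ to be a single point, i.e.\ $\dim(T_pX\cap A_i)=n-i-1$ exactly rather than larger. I would prove this by a dimension count on the incidence correspondence between pairs $(p,A_i)$ and the Grassmannian. The locus where $(T_pX)^{\perp}\cap L$ has dimension $\geq1$ imposes strictly more conditions on $A_i$. So for general $A_i$ its preimage in $\pi_2^{-1}(L)$ has dimension less than $i$ and cannot fill a component. Alternatively, I could reduce to Proposition \ref{prop_one-to-one}: $X(A_i)$ is the smooth critical locus of the generic projection $X\dashrightarrow\mathbb P^{i+1}$ from $A_i$. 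Via the correspondence $\mathcal C(\pi)\cong(X|_V)^{\perp}$ with roles of $X$ and $X^{\perp}$ swapped, it is birational to a dual variety of a linear section, which also yields the dimension statement for $i\le n$.
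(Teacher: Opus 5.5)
The paper does not prove this statement; it only cites Kleiman. Your argument is the standard proof behind that citation, and it is correct. The identification $X(A_i)=\pi_1(\pi_2^{-1}(A_i^{\perp}))$ is the same mechanism the paper uses in Lemma \ref{lemma_polar=crit}. Transversality together with the projection formula then correctly reduces everything to generic injectivity of $\pi_1$ on $\pi_2^{-1}(L)$.

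Your dimension count does close that step, provided you weigh the fibre dimension against the extra conditions rather than just noting that there are more conditions. Over $X^{sm}$, the fibre of $\pi_1|_{\pi_2^{-1}(L)}$ is exactly the linear space $(T_pX)^{\perp}\cap L$. For fixed $p$, requiring this space to have dimension at least $a-1$ is a Schubert condition of codimension $a(a-1+n-i)$ on $L$. Hence, for general $L$, the part of $\pi_2^{-1}(L)$ lying over such points has dimension at most
\[
n-a(a-1+n-i)+(a-1)=i-(a-1)(a-1+n-i),
\]
which is $<i$ for $a\ge 2$. Points over $X\setminus X^{sm}$ are handled by your genericity remark.

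Keep this count rather than the alternative via Proposition \ref{prop_one-to-one}. That proposition concerns the map $\mathcal C(\pi)\to\Delta(\pi)$, not $\pi_1|_{\pi_2^{-1}(L)}$. So it gives you the dimension of $X(A_i)$, but not the injectivity you actually need.
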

  In general, one can talk about the class of $i$-th polar locus $[X(A_i)]$ in the Chow group. There is an enumerative geometry formula to compute this class \cite[IV,21]{Kle77}. 
\subsection{Relation to generic projection}
We will give an equivalent characterization from the point of view of generic projection. Note that the subspace $A_i\subseteq \mathbb P^{N}$ induces a rational map $\mathbb P^N\dashrightarrow \mathbb P^{i+1}$, which restricts to $X$ is a generic projection
$$\pi: X\dashrightarrow \mathbb P^{i+1},$$
as long as $A_i$ is in general position. We observe that

\begin{lemma}\label{lemma_polar=crit}
The polar locus $X(A_i)$ coincides with the smooth critical locus of $\pi$.    
\end{lemma}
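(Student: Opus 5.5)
The plan is to compare the two loci pointwise on a dense open subset of $X^{sm}$ and then take closures. Let $A=A_i\subseteq\mathbb P^N$ be the general linear subspace of codimension $i+2$, so $\dim A=N-i-2$. Let $\pi:X\dashrightarrow\mathbb P^{i+1}$ be the linear projection with center $A$. Off the base locus $X\cap A$, the map $\pi$ is a morphism. At a point $p\in X^{sm}\setminus A$, the fiber of the ambient projection $\mathbb P^N\setminus A\to\mathbb P^{i+1}$ through $p$ is the linear space $\textup{Span}(p,A)$, of dimension $N-i-1$. Exactly as in Lemma \ref{lemma_C(pi)general} (with $\mathbb PV^\perp$ replaced by $A$), the differential $d\pi_p:T_pX\to T_{\pi(p)}\mathbb P^{i+1}$ fails to be surjective if and only if
\[
\textup{Span}(T_pX,A)\subsetneq\mathbb P^N .
\]
Here $T_pX$ is the projective tangent space, which contains $p$, so $\textup{Span}(p,A)\subseteq\textup{Span}(T_pX,A)$.

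Next I will convert this into a dimension count using the Grassmann formula for projective subspaces:
\[
\dim\textup{Span}(T_pX,A)=n+(N-i-2)-\dim(T_pX\cap A).
\]
Here we use the convention $\dim\emptyset=-1$. The span is all of $\mathbb P^N$ exactly when $\dim(T_pX\cap A)=n-i-2$, which is the expected dimension. So $p$ is a critical point if and only if $\dim(T_pX\cap A)\ge n-i-1$.

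It remains to replace ``$\ge$'' by ``$=$'' on a dense open set, using the genericity of $A$. Consider the incidence of pairs $(p,A)$ with $p\in X^{sm}$ and $\dim(T_pX\cap A)\ge n-i$. A standard Schubert-type dimension count on the Grassmannian shows that, for general $A$, this locus has strictly smaller dimension in $X^{sm}$ than the locus where $\dim(T_pX\cap A)\ge n-i-1$. Hence it does not contribute a component of either closure. Therefore, on the dense open subset of $X^{sm}\setminus A$ where $\dim(T_pX\cap A)\le n-i-1$, the set of critical points coincides with $\{p:\dim(T_pX\cap A)=n-i-1\}$.

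Finally I take closures in $X$. Points of $X^{sm}\cap A$ have to be handled separately, and here I expect the main obstacle: $\mathcal C(\pi)$ is defined through the blowup, or as the closure of the critical set away from the base locus (Definition \ref{def_smoothcrit_rat}). For general $A$, the base locus $X\cap A$ has codimension $i+2$ in $X$. I will argue that $X(A_i)$ has no component contained in $A$. This uses the genericity of $A$ once more, via a dimension count on the incidence over $X^{sm}$: the points $p\in A$ with the extra tangency $\dim(T_pX\cap A)=n-i-1$ form a proper subset of $X\cap A$. Granting this, both $X(A_i)$ and $\mathcal C(\pi)$ are the closure of the same set of points of $X^{sm}\setminus A$, so they coincide.

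As a consistency check, Theorem \ref{Thm2proof} and its corollary predict that $\mathcal C(\pi)$ has pure codimension $(n-i-1)+1=n-i$ in $X$ when it is nonempty. The polar locus $X(A_i)$ has the classical codimension $i$ in Kleiman's indexing only after matching conventions. I will reconcile this by checking the indexing of $A_i$ against \cite{Kle86}; if needed, the identification is with the codimension $N-(i+1)-1$ center, which is what the projection to $\mathbb P^{i+1}$ literally requires.
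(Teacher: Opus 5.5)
Your proof is correct in its main line, and it takes a more direct route than the paper. The paper does not redo the differential computation on $X$. It applies Proposition \ref{prop_critical-locus} with the roles of $X$ and $X^\perp$ exchanged, which uses biduality, to get $\mathcal C(\pi)=\pi_1(\pi_2^{-1}(A_i^\perp\cap X^\perp))$. It then reads off the polar condition from the existence of a hyperplane $H_\alpha$ containing both $T_xX$ and $A_i$, and finishes with ``and vice versa.''

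You instead apply the span criterion of Lemma \ref{lemma_C(pi)general} directly to the projection of $X$ from $A$. You then turn $\textup{Span}(T_pX,A)\neq\mathbb P^N$ into $\dim(T_pX\cap A)\ge n-i-1$ with the Grassmann formula. This needs no duality and works verbatim for any irreducible $X$. It also makes explicit two points the paper passes over: the critical condition is ``$\ge$'' while $X(A_i)$ is defined with ``$=$'', and $\mathcal C(\pi)$ is a closure taken away from the base locus. What the paper's route buys is the conormal description $\pi_1\pi_2^{-1}(X^\perp\cap A_i^\perp)$, which is the form in which the lemma feeds into Corollary \ref{cor_r_i=deg(disc)}.

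Three points should be tightened.

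\emph{The ``$\ge$ versus $=$'' step.} A strictly smaller dimension does not by itself rule out a lower-dimensional component. You need that $\Sigma_1=\{p\in X^{sm}:\dim(T_pX\cap A)\ge n-i-1\}$ is pure of dimension $i$ or empty. Kleiman transversality for the Gauss map $X^{sm}\to G(n,N)$ gives this, and it also shows the deeper locus is pure of dimension $2i-n-2<i$ or empty.

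\emph{The base locus.} Saying the extra-tangency points form ``a proper subset of $X\cap A$'' is not enough. Indeed $\dim(X\cap A)=n-i-2$ can exceed $\dim X(A_i)=i$, so a whole component could sit inside that subset. The correct and easier statement comes from Bertini: for general $A$, $X^{sm}\cap A$ is smooth of dimension $n-i-2$. Hence $\dim(T_pX\cap A)=n-i-2$ for every $p\in X^{sm}\cap A$, and the critical and polar sets do not meet $A$ at all.

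\emph{The indexing in your last paragraph.} The mismatch you see is not there. With $A_i$ of codimension $i+2$, i.e.\ dimension $N-i-2$ as you used, $X(A_i)$ has dimension $i$, so codimension $n-i$, exactly matching $\mathcal C(\pi)$. The ``codimension $i$'' indexing is Piene's $M_{n-i}$. Also, $N-(i+1)-1$ is the dimension of the center, not its codimension.
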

\begin{proof}
    Note that the projection $\mathbb P^N\dashrightarrow \mathbb P^{i+1}$ coincides with the dual of the inclusion $A_i^{\perp }\subseteq (\mathbb P^N)^{\vee}$. So by
    Definition \ref{def_critlocus}, \ref{def_smoothcrit_rat} (and switch $X$ and its dual) and Proposition \ref{prop_critical-locus}, the smooth critical locus of $\pi$ is $\pi_1(\pi_2^{-1}(A_i^{\perp}\cap X^{\perp}))$. The general point of this set consists of $x\in X$, such that there exists a hyperplane $H_{\alpha}$ satisfying $T_xX\subseteq H_{\alpha}$ and $A_i\subseteq H_{\alpha}$. Hence $T_xX$ and $A_i$ are collinear and $\dim (T_xX\cap A_i)\ge n-i-1$, so $x$ is contained in the $i$-th polar locus $X(A_i)$, and vice versa.
\end{proof}

Recall that we have showed that via the generic projection $\pi$, the smooth critical locus is generically one-to-one onto the smooth discriminant (cf. Proposition \ref{prop_one-to-one}), Lemma \ref{lemma_polar=crit} implies that
\begin{corollary}\label{cor_r_i=deg(disc)}
  For each $0\le i\le n$, the intersection number $r_i$ \eqref{eqn_r_i} is the degree of the smooth discriminant locus $\Delta_{i+1}$ of the generic projection $\pi: X\dashrightarrow \mathbb P^{i+1}.$ In other words, we have
$$\deg(\Delta_{i+1})=r_i(X).$$
\end{corollary}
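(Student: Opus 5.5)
The plan is to combine Lemma \ref{lemma_polar=crit} with Proposition \ref{prop_one-to-one} and the projection formula, treating the degenerate cases separately. Write $\mathcal C=\mathcal C(\pi)$ for the smooth critical locus of $\pi:X\dashrightarrow\mathbb P^{i+1}$. By Lemma \ref{lemma_polar=crit}, $\mathcal C=X(A_i)$, and by the proposition from \cite[p.187]{Kle86}, $\deg X(A_i)=r_i(X)$ whenever $X(A_i)$ has the expected dimension $n-i$ (and $r_i=0$ exactly when the polar locus is empty). It therefore suffices to show $\deg\Delta_{i+1}=\deg\mathcal C$, where $\Delta_{i+1}=\pi(\mathcal C)$.

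\emph{Empty and degenerate cases.} If $\textup{codim}(X^\perp)>i+1$, then Theorem \ref{Thm2proof}(iii) (when $\pi$ is dominant, i.e.\ $i+1\le n$) gives $\Delta_{i+1}=\emptyset$. Also $A_i^\perp\cap X^\perp=\emptyset$, so $\mathcal C=\pi_1(\pi_2^{-1}(A_i^\perp\cap X^\perp))$ is empty and $r_i=0$; hence both sides vanish. The boundary case $i=n$ is non-dominant and needs its own treatment. There $\mathcal C=X$, $\Delta_{n+1}=\pi(X)$, and $\pi$ is a generic projection to $\mathbb P^{n+1}$, which is birational onto its image. Thus $\deg\pi(X)=\deg X=r_n$, using that a general projection center $A_n$ of codimension $n+2$ misses $X$.

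\emph{Main case.} Here $\mathcal C$ has pure dimension $n-i$ (Corollary following Proposition \ref{prop_one-to-one}) and $\Delta_{i+1}$ is a hypersurface in $\mathbb P^{i+1}$, of dimension $i$. The dimensions do not match, so the relevant comparison is not $\deg\mathcal C$ versus the degree of its image under an immersion. I expect sorting out what ``degree'' is being compared to be the main obstacle. The intended identity should be read as $\deg\Delta_{i+1}=\int\pi_1^*H^{\,i}\cdot\pi_2^*H'^{\,N-1-i}$ computed on the dual side. By Theorem \ref{thm_dual}, $\Delta_{i+1}=(\mathbb P^{i+1\,\vee}\cap X^\perp)^\perp$, and the degree of the dual of a variety is the top polar class $r_0$ of that variety. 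So I would compute $r_0$ of $Y:=X^\perp\cap A_i^\perp$ inside $A_i^\perp\cong\mathbb P^{i+1\,\vee}$.

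\emph{Computing $r_0(Y)$.} The conormal variety of a general linear section $Y$ of $X^\perp$ is obtained from $N^\perp X^\perp\cong N^\perp X$ by cutting with $\pi_2^*H'$ exactly $N-i-1$ times and then projecting along the linear projection of the other factor. This is the standard behaviour of polar classes under general linear sections and projections, and the generality is supplied by Lemma \ref{lemma_generalityV}. The projection is birational on the cut locus by Proposition \ref{prop_one-to-one}. Therefore $r_0(Y)=\int_{N^\perp X}\pi_2^*H'^{\,N-1-i}\cdot\pi_1^*H^{\,i}=r_i(X)$. When $Y^\perp$ is a hypersurface, this equals $\deg Y^\perp=\deg\Delta_{i+1}$.

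The delicate points are the birationality of the projected conormal variety and ensuring that no excess contribution arises over the base locus $A_i$. I would handle both by choosing the hyperplanes inductively, as in Lemma \ref{lemma_generalityV}.
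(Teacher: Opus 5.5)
Your dimension count in the main case is wrong, and this error is what steered you away from the paper's argument. The Corollary after Proposition~\ref{prop_one-to-one} says $\mathcal C$ has pure \emph{codimension} $r+1=n-i$ in $X$. So $\dim\mathcal C=i$, which equals $\dim\Delta_{i+1}$. Three things confirm this:
\begin{itemize}
\item $r_i$ pairs the $i$-dimensional cycle $\pi_2^{-1}(A_i^\perp)$ with $\pi_1^*H^{i}$.
\item In your own case $i=n$, $\mathcal C=X$ has dimension $n=i$, not $0$.
\item Only equal dimensions are compatible with the birationality of $\mathcal C\to\Delta_{i+1}$, which you yourself invoke.
\end{itemize}
With the correct count, the comparison $\deg\Delta_{i+1}=\deg\mathcal C$ that you discarded is exactly the paper's proof:
\begin{itemize}
\item $r_i=\deg X(A_i)$ by Kleiman;
\item $X(A_i)=\mathcal C$ by Lemma~\ref{lemma_polar=crit};
\item $\mathcal C\to\Delta_{i+1}$ is birational by Proposition~\ref{prop_one-to-one};
\item projection from $A_i$ pulls $\mathcal O_{\mathbb P^{i+1}}(1)$ back to $\mathcal O_{\mathbb P^N}(1)$.
\end{itemize}
Hence $\deg\Delta_{i+1}=\deg\mathcal C=r_i$, provided the center contributes nothing. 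For instance $\mathcal C\cap A_i=\emptyset$ suffices, and for smooth $X$ this holds for general $A_i$ by a dimension count. The paper leaves that last point implicit as well.

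Your dual-side route still works; it is a different packaging of the same computation. Let $Z=\pi_2^{-1}(A_i^\perp)\subseteq N^\perp_{\mathbb P^N}X$ be the cut locus. It maps birationally onto $\mathcal C$ by $\pi_1$. It also maps birationally onto $N^\perp_{A_i^\perp}(Y)$ by $(x,\alpha)\mapsto(\alpha,\textup{proj}_{A_i}(x))$. The hyperplane class of $\mathbb P^{i+1}$ pulls back to $\pi_1^*H$ on $Z$. So your identity $r_0(Y)=r_i(X)$ is the paper's birationality-plus-projection-formula step, lifted to conormal varieties.

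Equivalently, your identity combines two facts:
\begin{itemize}
\item Piene's section formula applied $N-1-i$ times to $X^\perp$, giving $r_0(X^\perp\cap A_i^\perp)=r_{N-1-i}(X^\perp)$;
\item the biduality symmetry $r_{N-1-i}(X^\perp)=r_i(X)$.
\end{itemize}
If you argue this way, cite \cite{Pie78} directly. In the paper, that formula is derived from the very corollary you are proving, so citing the paper's version would be circular.

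Your main-case argument does not need Lemma~\ref{lemma_polar=crit}, nor the identity $r_i=\deg X(A_i)$ beyond its $r_0$ case $r_0(Y)=\deg Y^\perp$. The price is that you must justify the conormal identification for a general linear section, which the paper's short argument avoids.
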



\subsection{Classical results}
As a direct consequence of the above, we recover various classical results on the degree of the polar locus, which by Corollary \ref{cor_r_i=deg(disc)} also implies the relations between $\deg(\Delta_{i+1})$ for various $i$.

\begin{theorem}\cite{HK85}
    $r_i\neq 0$ iff $\ddef(X)\le i\le n$, where $\ddef(X)$ is the defect of $X^{\perp}$ (cf. Definition \ref{remark_defect}).
\end{theorem}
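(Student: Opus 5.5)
The plan is to read the statement off Corollary \ref{cor_r_i=deg(disc)} together with the purity Theorem \ref{Thm2proof}. By Corollary \ref{cor_r_i=deg(disc)}, for $0\le i\le n$ we have $r_i(X)=\deg(\Delta_{i+1})$, where $\Delta_{i+1}$ is the smooth discriminant of the generic projection $X\dashrightarrow \mathbb P^{i+1}$. This projection is dominant exactly when $i+1\le n$. Set $c=\textup{codim}(X^\perp)=\ddef(X)+1$.

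First treat the dominant range $i+1\le n$. Theorem \ref{Thm2proof} says $\Delta_{i+1}$ is a nonempty hypersurface exactly when $c\le i+1$, i.e. $\ddef(X)\le i$, and is empty when $c>i+1$. A nonempty hypersurface has positive degree. Since $\deg$ of a hypersurface here means the degree of the divisor, for which the birationality of Proposition \ref{prop_one-to-one} gives the multiplicity-one identification, we get $r_i\ne 0$ iff $\ddef(X)\le i$ in this range. One must take care that "degree" of an empty set is $0$, matching $r_i=0$. For $i>n$ the vanishing $r_i=0$ is already noted above.

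The remaining case is $i=n$, where $X\dashrightarrow\mathbb P^{n+1}$ is not dominant. Here $r_n=\deg X\neq0$ by the projection formula. We also need $\ddef(X)\le n$, which holds because $\dim X^\perp\ge N-1-n$: the conormal variety has dimension $N-1$ and fibres of $\pi_2$ over $X^\perp$ are contained in $X$. Alternatively, the convention $\Delta=\pi(X)$ gives $\deg\Delta_{n+1}=\deg X$ for a generic (birational onto image) projection, which is consistent.

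The main obstacle is the degree bookkeeping in Corollary \ref{cor_r_i=deg(disc)}. One must check that $r_i$, defined by intersection numbers on $N^\perp_{\mathbb P^N}X$, equals the degree of the discriminant and not, say, zero when the polar locus has the wrong dimension. Concretely, when $\ddef(X)>i$, the polar locus $X(A_i)$ is empty, since $A_i^\perp\cap X^\perp=\emptyset$ by dimension, so $r_i=0$. When $\ddef(X)\le i$, the polar locus has the expected codimension $n-i$ by the purity corollary following Proposition \ref{prop_one-to-one}. Hence Kleiman's identification $r_i=\deg X(A_i)>0$ applies. I would handle this by arguing directly with $\pi_2^{-1}(A_i^\perp)$, which is nonempty of the expected dimension exactly when $\dim X^\perp\ge i+1$, i.e. $\ddef(X)\le N-2-i$. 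The dimension indices need to be matched to the paper's convention for $\ddef$ at this point.
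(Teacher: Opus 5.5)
Your argument is correct and follows the paper's route: combine Corollary \ref{cor_r_i=deg(disc)} with the trichotomy of Theorem \ref{Thm2proof} for $k=i+1$, using $\textup{codim}(X^\perp)=\ddef(X)+1$. Your separate check of the non-dominant endpoint $i=n$ ($r_n=\deg X\neq 0$ and $\ddef(X)\le n$) covers a case the paper's one-line proof passes over, since Theorem \ref{Thm2proof} assumes dominance.

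The one slip is the criterion in your last paragraph. Since $\dim A_i^\perp=i+1$, we have $\pi_2^{-1}(A_i^\perp)\neq\emptyset$ iff $\dim X^\perp\ge N-1-i$, i.e.\ $\ddef(X)\le i$, not $\dim X^\perp\ge i+1$. Your version contradicts your own correct emptiness count just before it, and it fails for the Segre threefold of Example \ref{example_segre3fold} at $i=0$. Nothing in your main argument uses it.
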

\begin{proof}
It follows from Corollary \ref{cor_r_i=deg(disc)} that $r_i\neq 0$ iff $\pi:X\dashrightarrow \mathbb P^{i+1}$ has nonempty smooth discriminant, which by Theorem \ref{Thm2proof}, is equivalent to $\ddef(X)\le i$.
\end{proof}

\begin{theorem}\cite{Pie78}
    If $H$ is a general hyperplane, then $r_i(X)=r_{i-1}(X\cap H)$.
\end{theorem}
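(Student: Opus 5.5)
We will deduce the identity $r_i(X)=r_{i-1}(X\cap H)$ from Corollary \ref{cor_r_i=deg(disc)} together with the duality Theorem \ref{thm_mainthmequiv}, following the same template as the preceding proof. Write $n=\dim X$, $X_H=X\cap H\subseteq H\cong\mathbb P^{N-1}$, which is irreducible of dimension $n-1$ for general $H$ (Bertini). By Corollary \ref{cor_r_i=deg(disc)}, $r_i(X)=\deg\Delta_{i+1}(X)$, the degree of the smooth discriminant of a generic projection $X\dashrightarrow\mathbb P^{i+1}$. Likewise $r_{i-1}(X_H)=\deg\Delta_{i}(X_H)$, the degree of the smooth discriminant of a generic projection $X_H\dashrightarrow\mathbb P^{i}$. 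So we must compare these two discriminant degrees.

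Using Theorem \ref{thm_dual}, $\Delta_{i+1}(X)=(X^\perp\cap\Lambda)^\perp$ for a general $\Lambda\cong\mathbb P^{i+1}\subseteq(\mathbb P^N)^\vee$. Its degree, for a hypersurface in $\Lambda^\vee$, equals the number of points of the discriminant on a general line $\ell\subseteq\Lambda^\vee$. Dually, $\ell$ corresponds to a codimension-two subspace $M\subseteq\Lambda$, and points of $\ell\cap(X^\perp\cap\Lambda)^\perp$ are hyperplanes of $\Lambda$ containing $M$ and tangent to $X^\perp\cap\Lambda$; i.e. $\deg\Delta_{i+1}(X)$ is the class of $X^\perp\cap\Lambda$ computed by a pencil. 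The key step is to choose $\Lambda$ to contain the point $h=[H]\in(\mathbb P^N)^\vee$, which is still general since $H$ is general. The projection $X\dashrightarrow\mathbb P^{i+1}$ then has the hyperplane $H$ among its pulled-back hyperplanes; restricting to $X_H$ gives a generic projection $X_H\dashrightarrow\mathbb P^i$ whose target is the hyperplane of $\mathbb P^{i+1}$ dual to $h$. On the dual side, $X_H^\perp\subseteq H^\vee=(\mathbb P^N)^\vee/\langle h\rangle$ is the projection from $h$ of $X^\perp$ (classical fact: hyperplanes of $H$ tangent to $X_H$ lift to hyperplanes tangent to $X$, as in Lemma \ref{lemma_maptodual} and Proposition \ref{prop_critical-dom}). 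Hence $X_H^\perp\cap\Lambda'$ with $\Lambda'=\Lambda/\langle h\rangle\cong\mathbb P^i$ is the projection from $h$ of the cone-like set $\overline{p_h^{-1}}$, and one gets $\Delta_i(X_H)=\Delta_{i+1}(X)\cap h^\perp$, a general hyperplane section of $\Delta_{i+1}(X)$ — this is Lemma \ref{lemma_factor}'s philosophy applied by section instead of projection.

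Granting $\Delta_i(X_H)=\Delta_{i+1}(X)\cap h^{\perp}$ set-theoretically with $h^\perp$ a general hyperplane of $\mathbb P^{i+1}$, degrees agree (the section is reduced generically, and both sides are hypersurfaces or both empty by Theorem \ref{Thm2proof}, consistent with $\ddef(X_H)=\ddef(X)-1$ or $0$). Cycle-theoretically one can alternatively verify the equality directly from \eqref{eqn_r_i}: $N^\perp_H(X_H)$ maps birationally onto $\pi_1^{-1}(X_H)\subseteq N^\perp_{\mathbb P^N}X$ modulo projection from $h$, so $r_{i-1}(X_H)=\int\pi_1^*H^{i}\pi_2^*H'^{N-1-i}=r_i(X)$ after matching exponents; I would use this as a check. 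The main obstacle is the identification $\Delta_i(X_H)=\Delta_{i+1}(X)\cap h^\perp$: one must show a general point of the right side is a critical value of $X_H$, i.e. that a critical point of $X\dashrightarrow\mathbb P^{i+1}$ over a general point of $h^\perp$ can be taken in $H$ and that generality of $\Lambda$ through $h$ suffices for Lemma \ref{lemma_generalityV}. I would handle this via Proposition \ref{prop_critical-locus}: critical points are $x\in X$ with $T_xX\subseteq\alpha^\perp$, $\alpha\in\Lambda\cap X^\perp$, and the fiber condition over $h^\perp$ forces $x\in H$.
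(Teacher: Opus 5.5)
Your main line is the paper's proof. Taking $h=[H]\in\Lambda$ makes $H$ the pullback of a general hyperplane $h^\perp\subseteq\mathbb P^{i+1}$. The restriction is then a generic projection $X_H\dashrightarrow\mathbb P^i$ with $\Delta_i(X_H)=\Delta_{i+1}(X)\cap h^\perp$, and the degrees are compared via Corollary~\ref{cor_r_i=deg(disc)}.

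The paper only asserts the identification of discriminants. The primal argument you sketch at the end is the right justification, but you should state the fact that drives it. Since $h$ is general in $\Lambda$, $h\notin X^\perp$. Hence $X\cap H=\pi^{-1}(h^\perp)$ (off the base locus) is smooth of dimension $n-1$ at every $x\in X^{sm}\cap H$. This forces $\operatorname{im}(d\pi_x)\not\subseteq T_{\pi(x)}h^\perp$, so $d(\pi|_{X_H})_x$ is surjective if and only if $d\pi_x$ is. That gives both inclusions.

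Do discard the dual-side paragraph, though: $X_H^\perp$ is \emph{not} the projection $p_h(X^\perp)$ of $X^\perp$ from $h$. By Theorem~\ref{thm_mainthmequiv} with $\mathbb PV=H$, it is the smooth discriminant of that projection. This coincides with the image only when $X$ is dual defective. For a conic $X\subseteq\mathbb P^2$, $p_h(X^\perp)$ is all of $H^\vee\cong\mathbb P^1$, while $X_H^\perp$ is two points. Lemma~\ref{lemma_maptodual} and Proposition~\ref{prop_critical-dom} only give $X_H^\perp=p_h\bigl((X|_H)^\perp\bigr)$, so the derivation in that paragraph does not go through as written.

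Finally, what you call a cycle-theoretic check is in fact a complete independent proof (essentially Piene's). Because $h\notin X^\perp$, the map $(x,\alpha)\mapsto(x,p_h(\alpha))$, i.e.\ $\alpha^\perp\mapsto\alpha^\perp\cap H$, is a morphism $\pi_1^{-1}(X_H)\to N^\perp_H(X_H)$. It is birational, with inverse $\beta^\perp\mapsto\operatorname{Span}(\beta^\perp,T_xX)$, and it pulls back both hyperplane classes. Moreover $[\pi_1^{-1}(X_H)]=\pi_1^*H\cap[N^\perp_{\mathbb P^N}X]$, so the projection formula gives $r_{i-1}(X_H)=r_i(X)$ directly from \eqref{eqn_r_i}. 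That route bypasses Corollary~\ref{cor_r_i=deg(disc)} and the discriminant machinery entirely. The paper's route instead yields the more geometric statement that the discriminant of $X_H$ is a general hyperplane section of that of $X$.
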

\begin{proof}
Let $H$ be the hyperplane corresponding to a general hyperplane $\mathbb P^i\subseteq \mathbb P^{i+1}$, the by restriction the generic projection $\pi: X\dashrightarrow \mathbb P^{i+1}$ to $H$, we have  $\pi|_H:X\cap H\dashrightarrow \mathbb P^{i}$, whose smooth discriminant locus $\Delta(\pi|_H)$ is precisely the intersection $\Delta(\pi)\cap \mathbb P^i$.
\end{proof}
By induction on taking hyperplane sections, one obtain
\begin{theorem}[Segre, 1912] \cite[p.190]{Kle86}\label{thm_Segre12} There is an equality
    $$r_i(X)=r_0(X_i),$$
where $X_i$ is a section of $X$ by a general codimension $i$ linear subspace.
\end{theorem}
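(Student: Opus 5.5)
The plan is to induct on $i$ using the preceding theorem of Piene, $r_i(X)=r_{i-1}(X\cap H)$ for a general hyperplane $H$. Iterating it $i$ times gives the statement, so the real work is checking that genericity survives each step. For $i=0$ the claim is a tautology, since $X_0=X$.

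For the inductive step, fix $i\ge 1$ and let $H$ be a general hyperplane. Put $X'=X\cap H$, an irreducible projective variety by Bertini once $\dim X\ge 2$. By Piene's theorem, $r_i(X)=r_{i-1}(X')$. Applying the induction hypothesis to $X'$ inside $H\cong\mathbb P^{N-1}$ gives $r_{i-1}(X')=r_0(X'_{i-1})$, where $X'_{i-1}$ is the section of $X'$ by a general codimension $i-1$ linear subspace of $H$. A general codimension $i-1$ subspace of a general hyperplane $H$ is a general codimension $i$ subspace of $\mathbb P^N$. Hence $X'_{i-1}=X_i$, and so $r_i(X)=r_0(X_i)$.

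To ground the whole chain in the paper's framework, I would also write the direct argument through Corollary~\ref{cor_r_i=deg(disc)}. It identifies $r_i(X)=\deg\Delta_{i+1}$, the degree of the smooth discriminant of a generic projection $X\dashrightarrow\mathbb P^{i+1}$. Restricting that projection to the preimage of a general $\mathbb P^1\subseteq\mathbb P^{i+1}$ amounts to cutting $X$ by a general codimension $i$ linear space containing the projection center. This yields a generic pencil on $X_i$, whose discriminant is $\Delta_{i+1}\cap\mathbb P^1$. The number of points in that intersection is $\deg\Delta_{i+1}$ when $\Delta_{i+1}$ is a hypersurface, and $0$ otherwise. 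On the other side, $r_0(X_i)$ is the number of singular members of a generic pencil of $X_i$, i.e.\ its class, computed through $\deg X_i^{\perp}$. This is consistent with Theorem~\ref{thm_mainthmequiv}.

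I expect the main obstacle to be justifying that the smooth discriminant commutes with restriction to a general linear subspace of the target. That is the content of the displayed equality $\Delta(\pi|_H)=\Delta(\pi)\cap\mathbb P^i$ used in the previous proof. I would verify it via Proposition~\ref{prop_critical-locus}, using the fact that a general hyperplane section of the smooth critical locus is the critical locus of the restricted map. I would also check that the zero-dimensional and defective cases match: both sides vanish exactly when $\operatorname{codim}X^{\perp}>i$, by Theorem~\ref{Thm2proof}.
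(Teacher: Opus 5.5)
Your induction on $i$ via Piene's identity $r_i(X)=r_{i-1}(X\cap H)$ is exactly the paper's argument (``by induction on taking hyperplane sections''), and your supplementary direct check through Corollary~\ref{cor_r_i=deg(disc)}, cutting by a general codimension-$i$ space through the projection center, is also sound. One small slip in that check: Theorem~\ref{Thm2proof} with $k=i+1$ shows both sides vanish exactly when $\operatorname{codim}X^{\perp}>i+1$ (i.e.\ $\ddef(X)>i$), not when $\operatorname{codim}X^{\perp}>i$; otherwise $r_0(X)=\deg X^{\perp}$ would wrongly vanish for non-defective $X$.
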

We will see a simple application of Theorem \ref{thm_Segre12} in the next section.


\section{Monodromy and braid groups}\label{sec_braidgroup}

Throughout this section, we work with $\Bbbk=\C$. The goal of this section is to prove Theorem \ref{intro_thm_pi1}. We start with some general facts about discriminant locus of general finite projections of hypersurfaces onto projective spaces.

Let $\Sym^k (\mathbb{P}^1)$ be the symmetric product of $\mathbb P^1$. Let $D^k =\{[x_1,\dotsb, x_k]\in\Sym^k (\mathbb{P}^1)\vert\exists\ 1\leq i<j\leq k,\ x_i=x_j\}$ be the fat diagonal. Then the (spherical) \emph{braid group} $B_k$ of $k$ points on $\mathbb{P}^1$ is the fundamental group $$B_k\coloneqq\pi_1\left(\Sym^k (\mathbb{P}^1)\setminus D^k,\ast\right)$$ for some chosen base point $\ast\in \Sym^k (\mathbb{P}^1)\setminus D^k$.\par

Recall from Section \ref{sec_disc} that for $Y\subseteq\mathbb{P}^N$ and $p\colon Y\dashrightarrow \mathbb{P}^k$ any linear projection, the  (set-theoretical) discriminant locus of $p$ is $\Delta_{Fitt}$. We write $\Delta=\Delta_{Fitt}$ if it coincides with the smooth discriminant. In this section, we use $p$ as the notation for generic projection instead of $\pi$ to avoid confusion with the notation $\pi_1$ for the fundamental group.
\subsection{Braid monodromy from generic finite projection}
The following lemma can also be found in \cite{Moishezon81}.
\begin{lemma}\label{lmm:curve-braid}
    Assume that $Z$ is an irreducible plane curve with $p\colon Z\to \mathbb P^1$ a linear projection.
    Then there is a braid monodromy representation
$$g\colon \pi_1(\mathbb P^1\setminus \Delta_{Fitt},*)\to B_m$$
where $m=\deg(Z)$. Moreover, if $Z$ is smooth and the projection center is in general position, then the map $g$ is surjective.
\end{lemma}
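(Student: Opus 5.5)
The plan has two parts: first build the braid monodromy map $g$, then prove that it is surjective when $Z$ is smooth and the projection center is general.

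\textbf{Construction of $g$.} Let $o\in\mathbb P^2\setminus Z$ be the projection center, so that $p$ is finite of degree $m$. The lines through $o$ are parametrized by $\mathbb P^1$. For $t\in\mathbb P^1\setminus\Delta_{Fitt}$, the line $\ell_t$ meets $Z$ transversally in exactly $m$ distinct points. Fix the base point $*$ and identify $\ell_*$ with $\mathbb P^1$. Over $\mathbb P^1\setminus\Delta_{Fitt}$, the lines $\ell_t$ form a trivial $\mathbb P^1$-bundle: it is the blow-up of $\mathbb P^2$ at $o$, restricted to the complement of the finitely many points of $\Delta_{Fitt}$. Trivialize it (over a noncompact curve, any $\mathbb P^1$-bundle is trivial). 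The unordered fiber $Z\cap\ell_t$ then defines a continuous map
$$\mathbb P^1\setminus\Delta_{Fitt}\to \Sym^m(\mathbb P^1)\setminus D^m.$$
Take $g$ to be the induced map on $\pi_1$. Changing the trivialization changes $g$ only by conjugation, so $g$ is well defined up to that ambiguity.

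\textbf{Surjectivity.} Assume $Z$ is smooth and $o$ is general. Then $Z$ has only simple tangents to lines through $o$: every critical line is tangent at exactly one point, and the contact there is simple. This holds because the lines through $o$ that are bitangent, flex, or otherwise special form a finite set, whereas $o$ is general. So near each point $t_j\in\Delta_{Fitt}$, exactly two of the $m$ points come together as $\sqrt{t-t_j}$ and the others stay apart. It follows that the image under $g$ of a small loop around $t_j$ (joined to $*$ by a path) is a half-twist $\sigma_j$ exchanging two points along an arc.

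The point is to show that these half-twists generate $B_m$. I would do this in two steps.
\begin{enumerate}
\item \emph{Transitivity.} Since $Z$ is irreducible (smooth plane curves are connected), the permutation monodromy of the cover $Z\to\mathbb P^1$ is transitive. It is also generated by transpositions, so its image in $S_m$ is all of $S_m$.
\item \emph{Lifting to $B_m$.} Here I would invoke the standard argument of Moishezon: degenerate $Z$ to a union of $m$ general lines. Each line in that configuration contributes half-twists, and these half-twists form a standard generating set of $B_m$. The class of the braid monodromy is stable under equisingular deformation. Moreover, the local monodromy of a smoothed node produces the square of a half-twist, which can be accounted for through conjugates of the half-twists of the smooth curve. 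Together these show that the image of $g$ contains all Artin generators.
\end{enumerate}

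I expect the main obstacle to be this second step, passing from $S_m$ to $B_m$. A subgroup of $B_m$ generated by half-twists that surjects onto $S_m$ need not be all of $B_m$. So I would try first to rely on the degeneration argument of \cite{Moishezon81}, deforming $Z$ within smooth curves of degree $m$ to a neighbourhood of a line arrangement. If that route runs into trouble, the fallback is the Zariski--van Kampen relations together with a computation of the fundamental group of the complement of a generic smooth curve, which is $\Z/m$. This computation would show that the generated subgroup contains enough full twists to recover $B_m$.
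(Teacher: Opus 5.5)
Your construction of $g$ and your Step~1 are the paper's argument. The paper passes from simple branching and transitivity directly to loops whose images are half-twists on consecutive pairs $(z_i,z_{i+1})$, and then cites Artin. Your objection to that inference is correct: a half-twist depends on an arc, not just on a pair of points, and half-twists whose permutations generate $\mathfrak S_m$ need not generate $B_m$. So you depart from the paper exactly where its argument is thinnest.

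The gap is that your Step~2, as written, does not close this, because the degeneration is misdescribed.
\begin{itemize}
\item Under projection from a general point a line has no branch points, so the lines of $\ell_1\cdots\ell_m=0$ contribute no half-twists. The only singular fibres lie over the $\binom m2$ nodes, and there the local monodromy is a full twist $\tau_{ij}^2$.
\item Invariance under equisingular deformation lets you move among smooth curves with simply branched projection. It does not connect such a curve to the arrangement.
\item Observing that a node's full twist is a product of half-twists of the smoothed curve only puts $\tau_{ij}^2$ in the image, not $\tau_{ij}$.
\end{itemize}
Nothing you wrote shows that the image contains the Artin generators.

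What is missing is the local regeneration at the nodes, organized as follows.
\begin{itemize}
\item[(a)] Consider pairs $(Z,o)$ with $Z$ smooth of degree $m$ and all tangencies of $Z$ with lines through $o$ simple. They form a nonempty Zariski-open, hence connected, set, along which the $m(m-1)$ branch points never collide. So $\operatorname{im}(g)$ is constant up to conjugacy.
\item[(b)] Take $Z_\epsilon=\{\ell_1\cdots\ell_m+\epsilon G=0\}$ with general $\ell_i,G$ and $0<|\epsilon|\ll1$. The $m(m-1)=2\binom m2$ branch points lie in pairs near the images of the nodes. In the local model $(y-ax)(y-bx)=\epsilon'$, each point of a pair has monodromy exactly $\tau_{ij}$, for paths inherited from the arrangement. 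Thus the smooth curve's half-twists are the square roots of the nodal full twists, and $\operatorname{im}(g)=\langle\tau_{ij}\rangle$.
\item[(c)] Take a real arrangement and a real base point to the left of all crossings. The wiring diagram writes each $\tau_{ij}$ as $w\sigma_s w^{-1}$, with $w$ a word in the $\sigma_{s'}^{\pm1}$ of earlier crossings. Induction along the crossings then puts every $\sigma_s$ in $\langle\tau_{ij}\rangle$. Every $s\in\{1,\dots,m-1\}$ occurs, since the wires end in reversed order.
\end{itemize}

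Your fallback cannot substitute for this. Zariski--van Kampen goes from braid monodromy to the complement: it presents $\pi_1$ as the quotient of the free group of rank $m$ by the relations $\beta(u)=u$ for $\beta\in\operatorname{im}(g)$. Such a quotient does not determine the subgroup that produced it, so $\pi_1(\mathbb P^2\setminus Z)\cong\Z/m$ cannot force $\operatorname{im}(g)=B_m$.

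A minor point on the construction: changing the trivialization can also multiply $g$ by a character valued in the central subgroup generated by the full twist, not only conjugate it. This is harmless for surjectivity.
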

\begin{proof}
    Write $F= p^{-1} (\ast)$. Note that the restriction $p^{\circ}:Z\setminus p^{-1}(\Delta_{Fitt})\to\mathbb{P}^1\setminus\Delta_{Fitt}$ is \'etale, so the fiber over
$t$ is an unordered configuration of 
$m$ distinct points in the line $\mathbb P^1$. This induces a continuous map $\mathbb{P}^1\setminus\Delta_{Fitt}\to\Sym^m (\mathbb{P}^1)\setminus D^m$, hence a group homomorphism $$\pi_1\left(\mathbb{P}^1\setminus\Delta_{Fitt} ,\ast\right)\to\pi_1\left(\Sym^m (\mathbb{P}^1)\setminus D^m,\ast\right)=B_m.$$ 
    
    When $Z$ is smooth and the projection is general, the projection $Z\to\mathbb{P}^1$ is simply branched, and $\Delta_{Fitt}=\Delta$ consists of branch points. The monodromy action of each generating loop of $\pi_1\left(\mathbb{P}^1\setminus\Delta ,\ast\right)$ is given by Dehn twist along a pair of points. As the covering space $p^{\circ}$ is connected, the monodromy action on $F$ is transitive, and there exists an ordering $F=\{z_0,\dotsb, z_{m-1}\}$ and generating loops $\gamma_0,\dotsb,\gamma_{m-1}$ of $\pi_1\left(\mathbb{P}^1\setminus\Delta ,\ast\right)$ such that $\gamma_i$ is the Dehn twist along the pair $(z_{i\mod (m-1)}, z_{i+1 \mod (m-1)})$. It is classically known that the action of the subgroup $\langle\gamma_0,\dotsb,\gamma_{m-1}\rangle$ on $p^{-1} (\ast)\setminus Z$ is exactly the braid relation \cite{Artin25}, and therefore the image of $\langle\gamma_0,\dotsb,\gamma_{m-1}\rangle$ under the map $\pi_1\left(\mathbb{P}^1\setminus\Delta ,\ast\right)\to B_m$ coincides with the braid group, so the map is surjective.
\end{proof}

\begin{remark}\normalfont
   Note that there is a surjection $B_m\to\Mod (\mathbb{P}^1\setminus F)$ to the mapping class group of $\mathbb{P}^1\setminus F$ (see \cite[Section 9.1]{FM12}). The composition of $g$ with this map is the induced monodromy action on $\mathbb{P}^1\setminus F$. 
\end{remark}

\begin{proposition}\label{prop:surjection-permutation-irreducible}
     Let $Y\subseteq\mathbb{P}^{N+1}$ be an irreducible hypersurface and let $p:Y\to \mathbb P^N$ be the finite branched cover induced from a linear projection $\mathbb{P}^{N+1}\dashrightarrow\mathbb{P}^N$. Fix a regular value $\ast\in\mathbb{P}^N\setminus \Delta_{Fitt}$ of $p$. Then 
     
     \begin{itemize}
         \item[(1)]  There is a braid monodromy representation \[
    \varphi: \pi_1\left(\mathbb{P}^N\setminus\Delta_{Fitt},\ast\right)\to B_m
     \] 
     where $m=\deg (Y)$.

     \item[(2)] Assume moreover that $Y$ is normal. Then $\Delta_{Fitt}=\Delta$, and
    \begin{itemize}
     
        \item[(a)] the induced braid monodromy representation
        \[
            \varphi:\pi_1(\mathbb P^N\setminus \Delta,\ast)\to B_m
        \]
        is surjective;

        \item[(b)] $\Delta$ is a hypersurface of degree
        \[
            \deg(\Delta)=m(m-1);
        \]

        \item[(c)] if $\mathcal X=Y^\perp\subseteq (\mathbb P^{N+1})^\vee$ and
        $X=H\cap \mathcal X$ is the general hyperplane section determined by the projection,
        then
        \[
            \Delta \cong X^\perp;
        \]

        \item[(d)] if $N>1$, then $\Delta$ is irreducible.
  
    \end{itemize}
     \end{itemize}
  
 \end{proposition}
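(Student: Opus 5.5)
For part (1), I would run the argument of Lemma \ref{lmm:curve-braid} in families. Choose coordinates so that the projection center is a point $o\in\mathbb P^{N+1}\setminus Y$. For $t\in\mathbb P^N$, the fiber $p^{-1}(t)$ is $Y\cap \ell_t$, where $\ell_t\cong\mathbb P^1$ is the line through $o$ over $t$. Since $\deg Y=m$, this fiber consists of $m$ points counted with multiplicity, and they are distinct exactly when $t\notin\Delta_{Fitt}$. To get a map into $\Sym^m(\mathbb P^1)\setminus D^m$, the lines $\ell_t$ must be identified with a fixed $\mathbb P^1$. Over an affine chart this is a trivialization of the $\mathbb P^1$-bundle $\textup{Bl}_o\mathbb P^{N+1}\to\mathbb P^N$. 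More canonically, I would work with the monic polynomial in the fiber coordinate whose roots are the points of $Y\cap\ell_t$, which yields a configuration of $m$ points in $\C\subseteq\mathbb P^1$ after choosing a hyperplane at infinity. Choose the hyperplane at infinity $H_\infty$ through $o$ generically, and use that $\pi_1(\mathbb P^N\setminus(\Delta_{Fitt}\cup L))\to\pi_1(\mathbb P^N\setminus\Delta_{Fitt})$ is surjective for a generic line-at-infinity image $L$. Alternatively, accept that the braid group here is the spherical one, so the bundle $\mathbb P(\mathcal O\oplus\mathcal O(1))$ gives a well-defined map to $\Sym^m(\mathbb P^1)\setminus D^m$ modulo $PGL_2$ up to homotopy. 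Either choice gives the homomorphism $\varphi$.

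For part (2), first note that $\Delta_{Fitt}=\Delta$ by Lemma \ref{lemma_finite-clean}, since $Y$ is normal and the projection is generic. Parts (c) and (d) come straight from the duality results. Theorem \ref{thm_finite-normal}(1) gives $\Delta^\perp=\mathcal X\cap(\mathbb P^N)^\vee=X$, where $(\mathbb P^N)^\vee$ is the hyperplane $H=o^\perp$, a general hyperplane. Biduality (Theorem \ref{thm_classicalPD}) then gives $\Delta\cong X^\perp$. For (d), Theorem \ref{Thm2proof} with $k=N$ applies because $Y$ is a hypersurface of degree $m\ge2$: its dual $\mathcal X$ has codimension at most $N$ in $(\mathbb P^{N+1})^\vee$. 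If $\textup{codim}\,\mathcal X<N$, then $\Delta$ is an irreducible hypersurface. If the codimension equals $N$, then $\mathcal X$ is a curve, and irreducibility needs a separate check: $\mathcal X\cap H$ is a finite set and $\Delta$ is a union of hyperplanes. The cleanest fix is to apply Theorem \ref{Thm2proof}(i) and exclude the case $\dim\mathcal X=1$ using normality of $Y$. The dual of a curve is singular in codimension one along the osculating curve (cf. Example \ref{example_dualspatialcurve}), so it is not normal when $N>1$. This exclusion is the step I am least sure of.

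For (b), restrict to a general line $L\subseteq\mathbb P^N$. Then $p^{-1}(L)=Y\cap\textup{Span}(L,o)$ is a plane curve $Z$ of degree $m$. Since $Y$ is normal, its singular locus has codimension at least $2$, so by Bertini a general plane section avoids it and $Z$ is smooth. Hence $\deg\Delta=\#(\Delta\cap L)$ is the number of branch points of a general projection $Z\to\mathbb P^1$, which is the class of $Z$, namely $m(m-1)$. Here the simple branching of a general projection and the Riemann--Hurwitz count $2g-2+2m=m(m-1)$ both apply, and the intersections are transverse by genericity.

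For (a), I would use the same general line: the Lefschetz-type hyperplane theorem of Zariski gives that $\pi_1(L\setminus\Delta)\to\pi_1(\mathbb P^N\setminus\Delta)$ is surjective. Rather than surjectivity, though, I only need that $\varphi$ restricted to this image equals the braid monodromy of $Z\to L$. That restriction is surjective onto $B_m$ by Lemma \ref{lmm:curve-braid}, since $Z$ is smooth and the center is general. Naturality of the construction in (1) under restriction to $L$ then gives surjectivity of $\varphi$. I expect the main obstacle to be keeping the trivializations consistent, that is, making (1) well defined on $\mathbb P^N$ and compatible with restriction to $L$, together with the $\dim\mathcal X=1$ case in (d).
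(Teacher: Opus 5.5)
The parts you did not flag match the paper. That covers (b) and (c), and the reduction of (a) to a general line through the triangle $\pi_1(L\setminus\Delta)\to\pi_1(\mathbb P^N\setminus\Delta)\to B_m$. For (b) the paper quotes Segre's formula $r_i(X)=r_0(X_i)$ where you use Riemann--Hurwitz, and the count is the same. The two steps you did flag, however, are genuine gaps. Neither can be closed for the statement as written, and the paper's proof passes over both.

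\emph{The homomorphism in (1).} Neither of your trivializations gives a homomorphism $\pi_1(\mathbb P^N\setminus\Delta,\ast)\to B_m$.

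The affine trivialization over $\mathbb P^N\setminus L_\infty$ gives a representation of $\pi_1(\mathbb P^N\setminus(\Delta\cup L_\infty))$ into the Artin braid group. But surjectivity of $\pi_1(\mathbb P^N\setminus(\Delta\cup L_\infty))\to\pi_1(\mathbb P^N\setminus\Delta)$ is the wrong direction for descending it. You would need the meridian of $L_\infty$ to map to $1$. Instead it maps to the full twist $\theta=(\sigma_1\cdots\sigma_{m-1})^m$ (up to sign). The reason is that $\ell_t\setminus\{o\}$ is the fibre of $\mathcal O_{\mathbb P^N}(1)$. Dividing by the equation of $L_\infty$, the coordinate turns once as $t$ circles $L_\infty$, so all $m$ points rotate by a full turn. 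Equivalently, the local braid monodromies of a degree $m$ plane curve multiply to the full twist.

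Working in $\mathbb P(\mathcal O\oplus\mathcal O(1))$ modulo $PGL_2$ kills the image of $\pi_1(PGL_2)=\Z/2$, which is again $\langle\theta\rangle$. This is nontrivial, since $\theta$ has order $2$ in the spherical $B_m$ for $m\ge 3$. So what you actually construct is a map $\pi_1(\mathbb P^N\setminus\Delta)\to B_m/\langle\theta\rangle$, and your line argument shows that this map is surjective.

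Nothing stronger holds in general. Take a smooth cubic surface $Y\subseteq\mathbb P^3$. Then $\Delta$ is Zariski's sextic with six cusps on a conic, and $\pi_1(\mathbb P^2\setminus\Delta)\cong\Z/2*\Z/3$. This group does not surject onto the spherical $B_3$, which has order $12$ and whose only involution is central. Any image generated by an involution and an element of order $3$ is therefore abelian. Nor does it surject onto the Artin $B_3$, which is torsion free. The paper's ``for the same reason as Lemma~\ref{lmm:curve-braid}'' skips exactly this point.

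\emph{The curve case in (d).} Your claim that the dual of a curve is non-normal fails for plane curves, and statement (d) itself fails there.

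If $\mathcal X=Y^\perp$ is a plane curve, then $Y$ is a cone with vertex $\mathbb P^{N-2}$ over a plane curve $D$. When $D$ is smooth, the singular locus of $Y$ is just the vertex, of codimension $2$, so $Y$ is normal. Theorem~\ref{Thm2proof}(ii) then makes $\Delta$ a union of $m(m-1)$ hyperplanes. Concretely, take the quadric cone $x_0^2+x_1^2+x_2^2=0$ in $\mathbb P^3$. It is normal with $N=2$, and its dual is a conic. Its $\Delta$ is the pair of lines obtained as images of the two planes through $o$ and the vertex that are tangent to $Y$ along a ruling.

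The paper's argument has the same hole. It cuts by a general $\mathbb P^3$ and uses that the dual of a nondegenerate space curve is a tangent developable. But a general projection of a plane curve stays planar, and its dual is a cone, which can be normal.

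The correct version of (d) adds the hypothesis that $Y$ is not a cone over a smooth plane curve. Under that hypothesis $Y^\perp$ cannot be a curve:
\begin{itemize}
\item a curve spanning at least a $\mathbb P^3$ is excluded by the $\mathbb P^3$-section argument;
\item a plane curve would make $Y$ such a cone;
\item a line would make $Y$ linear of codimension $2$.
\end{itemize}
Theorem~\ref{Thm2proof}(i) then gives irreducibility. Your argument goes through once the osculating-curve claim is restricted to nondegenerate curves.
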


\begin{proof}
   For part (1), note that away from the discriminant locus $\Delta_{Fitt}$ the projection $$p^{-1} (\mathbb{P}^N\setminus \Delta_{Fitt})\to\mathbb{P}^N\setminus \Delta_{Fitt}$$is \'etale. For the same reason as the proof of Lemma \ref{lmm:curve-braid}, we obtain a braid monodromy representation $\varphi:\pi_1\left(\mathbb{P}^N\setminus\Delta_{Fitt},\ast\right)\to B_m$. 
    
   For part (2), Suppose $Y$ is normal, 
  Then Lemma \ref{lemma_finite-clean} implies that $\Delta_{Fitt}=\Delta$ coincides with the smooth discriminant.
For a general line $\mathbb{L}\subseteq\mathbb{P}^N$, let $Z:=p^{-1} (\mathbb{L})$. Then $Z$ is a general plane section of $Y$. Since $Y$ is normal and smooth in codimension one, Bertini theorem implies that $Z$ is a smooth plane curve of degree $m$. The projection $\mathbb{P}^2\dashrightarrow\mathbb{L}$ restricts to a finite branched cover 
    $$Z=Y\cap\mathbb{P}^2\to \mathbb{L}$$
    with branching locus $\Delta\cap\mathbb{L}$. As $Z$ is smooth and the choice of $\mathbb{L}$ is general, this is simply branched. Then, applying Lemma \ref{lmm:curve-braid}, we get a surjective group homomorphism $\pi_1\left(\mathbb{L}\setminus\Delta,\ast\right)\twoheadrightarrow B_m$. Finally, note that there is a commutative diagram of groups
    \[
    \begin{tikzcd}
        \pi_1\left(\mathbb{L}\setminus\Delta,\ast\right)\arrow[r]\arrow[rr,bend right=20]&\pi_1\left(\mathbb{P}^N\setminus\Delta,\ast\right)\arrow[r]&B_m,
    \end{tikzcd}
    \]and hence surjectivity of $\pi_1\left(\mathbb{L}\setminus\Delta,\ast\right)\twoheadrightarrow B_m$ implies the surjectivity of the second map. This proves (a)\par

   For (b), we note $\deg(\Delta)$ is the number of the (simple) branch points of $Z\to \mathbb P^1$. Specifically, it follows from Theorem \ref{thm_Segre12} that
 $$\deg(\Delta)=\deg(Z^{\vee})=m(m-1).$$
 
 The statement (c) follows from Theorem \ref{thm_finite-normal}. 
For irreducibility of $\Delta$, by Theorem \ref{thm_finite-normal} it suffices to show
that $\dim Y^\perp>1$. Suppose instead that $Y^\perp$ is a curve.
Let $S$ be a general $\mathbb P^3$-section of $Y$. Since $Y$ is normal,
$S$ is a normal surface. By Theorem \ref{thm_mainthmequiv}, the surface $S$
is the dual variety of the image of a general projection of $Y^\perp$ to $(\mathbb P^3)^\vee$.
But the dual variety of a nondegenerate space curve is a tangential surface,
hence is singular along the osculating curve
(\cite[Proposition 10.24]{Dolgachev-classicalAG}), contradicting the normality of $S$.
\end{proof}

\begin{remark}\normalfont
Note that by Zariski's theorem on fundamental group of complement of hypersurface \cite[Theorem 3.22]{Voisin}, the $\pi_1$ above is isomorphic to fundamental group of complement of plane curve 
$\pi_1(\mathbb P^2\setminus C,*)$, where $C=X^{\perp}\cap \mathbb P^2$ is a general plane section of degree $m(m-1)$. In particular, when $m\ge 4$,  Proposition \ref{prop:surjection-permutation-irreducible} (2a) implies that $\pi_1(\mathbb P^2\setminus C,*)$ contains a free group with two generators, and is "big" in the sense of \cite{large-pi1}. Note that the plane curves $C$ here are in general different from the examples studied in \cite{large-pi1}.
\end{remark}
\begin{remark}\normalfont
    There is a natural map from braid group $B_m \to \mathfrak S_m$ to the symmetric group, and the composition map 
$$\rho:\pi_1\left(\mathbb{P}^N\setminus\Delta_{Fitt},\ast\right)\to B_m \to \mathfrak S_m$$
is the monodromy representation permuting the $m$ points. 

In the set-up of Proposition \ref{prop:surjection-permutation-irreducible} (1) and without additional assumption, one can still show $\rho$ is surjective. This is an application of the Uniform Principle Theorem of an irreducible curve \cite[p.111]{ACGH85}.
\end{remark}

Now we prove Theorem \ref{intro_thm_pi1} from the introduction.

\begin{proof}[Proof of Theorem \ref{intro_thm_pi1}] It follows from Proposition \ref{prop:surjection-permutation-irreducible} (2). 
\end{proof}
\subsection{Varieties with normal duals}
 The following is a direct consequence of Proposition \ref{prop:surjection-permutation-irreducible} by assuming $\mathcal X$ is smooth. This proves Corollary \ref{intro_cor_normaldual}.
\begin{proposition}[Corollary \ref{intro_cor_normaldual}]\label{Prop_normaldual}
    Let $\mathcal X\subseteq\mathbb P^{N+1}$ be a smooth projective variety. Suppose that its dual variety $\mathcal X^{\perp}$ is a normal hypersurface of degree $m$. Then the dual variety $X^{\perp}$ of its general hyperplane section $X=\mathcal X\cap H$ has degree $m(m-1)$. Moreover, the braid monodromy representation induces a surjection 
$$\pi_1((\mathbb P^N)^{\vee}\setminus X^{\perp},*)\twoheadrightarrow B_m.$$
\end{proposition}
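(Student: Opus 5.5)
The plan is to reduce this statement to Proposition \ref{prop:surjection-permutation-irreducible}, applied with $Y:=\mathcal X^{\perp}\subseteq (\mathbb P^{N+1})^{\vee}$.

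By hypothesis $Y$ is a normal hypersurface of degree $m$. Since $\mathcal X$ is smooth, classical projective duality (Theorem \ref{thm_classicalPD}) gives $Y^{\perp}=\mathcal X^{\perp\perp}\cong\mathcal X$. A general hyperplane $H\subseteq\mathbb P^{N+1}$ is a point $H^{\perp}\in(\mathbb P^{N+1})^{\vee}$, and projecting from it gives the generic linear projection
$$p: Y\to (\mathbb P^{N+1})^{\vee}\dashrightarrow \mathbb P(\ldots)\cong H^{\vee}\cong (\mathbb P^N)^{\vee}.$$
Here the target is identified with $H^{\vee}$ via Lemma \ref{lemma_V*and-quotient}, applied with $V$ the hyperplane $H$. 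Because $H$ is general, $H^{\perp}\notin Y$, so $p$ is a finite branched cover of degree $m$. Thus we are exactly in the setup of Proposition \ref{prop:surjection-permutation-irreducible}(2), with the roles of primal and dual swapped.

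Next we identify the branch divisor. Apply Proposition \ref{prop:surjection-permutation-irreducible}(2c) (equivalently Theorem \ref{thm_finite-normal}(1) together with Theorem \ref{thm_mainthmequiv}). This gives $\Delta(p)\cong (Y^{\perp}\cap H)^{\perp}=(\mathcal X\cap H)^{\perp}=X^{\perp}$ as subvarieties of $H^{\vee}\cong(\mathbb P^N)^{\vee}$. Cleanness of the discriminant (Lemma \ref{lemma_finite-clean}, using that $Y$ is normal) ensures $\Delta=\Delta_{Fitt}$, so no extra components come from $\mathrm{Sing}(Y)$.

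Then we read off the conclusions. Part (2b) gives $\deg X^{\perp}=\deg\Delta=m(m-1)$. Part (2a) gives a surjection
$$\pi_1\big((\mathbb P^N)^{\vee}\setminus\Delta,*\big)\twoheadrightarrow B_m,$$
which becomes the claimed surjection for $(\mathbb P^N)^{\vee}\setminus X^{\perp}$ once we substitute $\Delta=X^{\perp}$.

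The main point of care is the bookkeeping of dualities. We must check that the identification of the projection target with $H^{\vee}$ matches the identification under which Theorem \ref{thm_mainthmequiv} yields $X^{\perp}$ (Lemma \ref{lmm_linear_PD}). The equality $\Delta=X^{\perp}$ must hold as actual subvarieties, not merely up to abstract isomorphism, so that the complements agree. We also need the generality of $H$ to serve both the projection of $Y$ and the hyperplane section of $\mathcal X$ simultaneously; this holds because both are open conditions on the same point $H^{\perp}$.
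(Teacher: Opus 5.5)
Your proposal is correct and matches the paper's proof: the paper also treats this as a direct consequence of Proposition \ref{prop:surjection-permutation-irreducible}(2) applied to $Y=\mathcal X^{\perp}$, using biduality $Y^{\perp}\cong\mathcal X$ to identify the branch divisor with $X^{\perp}$ and then reading off the degree $m(m-1)$ and the surjection onto $B_m$. One minor point: biduality in characteristic $0$ (Theorem \ref{thm_classicalPD}) holds for any projective variety, so you do not need the smoothness of $\mathcal X$ at that step.
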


\begin{example}\normalfont
   Let $\mathcal X=v_2(\mathbb P^n)$ be the second Veronese embedding of $\mathbb P^n$. Then
$\mathcal X^\perp$ is the determinantal hypersurface of singular symmetric
$(n+1)\times (n+1)$ matrices, so $\deg(\mathcal X^\perp)=n+1$.
For a general hyperplane section $X=\mathcal X\cap H$, the dual variety
$X^\perp$ is a hypersurface of degree $n(n+1)$, and the braid monodromy induces
a surjection
\[
\pi_1((\mathbb P^N)^\vee\setminus X^\perp,*)\twoheadrightarrow B_{n+1}.
\]
\end{example}
\begin{example}\normalfont\label{example_B4} 
In the special case $n=2$, $\mathcal X=v_2(\mathbb P^2)$ and $\mathcal X^\perp$
is a normal cubic hypersurface. (See Section \ref{sec_Veronese-surface}.) A general hyperplane section $X=\mathcal X\cap H$
is the rational normal quartic, and $X^\perp$ is the discriminant hypersurface in
$\Sym^4(\mathbb P^1)\cong \mathbb P^4$. A general plane section of $X^\perp$ is the
classical rational sextic with $4$ nodes and $6$ cusps studied by Zariski \cite{Zar36}.
Its complement has fundamental group $B_4$. Hence in this case the braid monodromy
surjection
$\pi_1((\mathbb P^4)^\vee\setminus X^\perp,*)\twoheadrightarrow B_3$
is not an isomorphism.
\end{example}


\bibliographystyle{alpha}
\bibliography{bibfile}

\end{document}